\newcommand{\IC}[0]{\mathbb{C}} 
 \newcommand{\IF}[0]{\mathbb{F}}
\newcommand{\CA}[0]{\mathcal{A}}
\newcommand{\CG}[0]{\mathcal{G}} 
 \newcommand{\CJ}[0]{\mathcal{J}}
 \newcommand{\CL}[0]{\mathcal{L}}
\newcommand{\CO}[0]{\mathcal{O}} 
\newcommand{\CQ}[0]{\mathcal{Q}}
 \newcommand{\FF}[0]{\mathfrak{F}}
\newcommand{\Aut}[0]{\operatorname{Aut}}
\newcommand{\gxp}{G \rtimes_\theta P}
\newcommand{\gpt}{G,P,\theta}
\newcommand{\pfin}{P^{(\text{fin})}}
\newcommand{\pinf}{P^{(\text{inf})}}
\newtheorem{theorem}{Theorem}[section]
\newtheorem{lemma}[theorem]{Lemma}
\newtheorem{proposition}[theorem]{Proposition}
\newtheorem{corollary}[theorem]{Corollary}
\newtheorem{notation}[theorem]{Notation}
\theoremstyle{remark}
\newtheorem{remark}[theorem]{\textnormal{\bfseries Remark}\bfseries}
\newtheorem{standingassumption}[theorem]{\textnormal{\bfseries Standing 
Assumption}\bfseries}
\newtheorem{definition}[theorem]{\textnormal{\bfseries Definition}\bfseries}
\newtheorem{example}[theorem]{\textnormal{\bfseries Example}\bfseries}
\newtheorem{examples}[theorem]{\textnormal{\bfseries Examples}\bfseries}
\newtheorem*{theorem*}{Theorem}
\newtheorem*{lemma*}{Lemma}
\newtheorem*{remark*}{Remark}
\numberwithin{equation}{section}
\newcommand{\Z}{\mathbb{Z}}
\newcommand{\N}{\mathbb{N}}
    \newcommand*{\qrr@gobblenexttocentry}[5]{}
    \newcommand*{\qrr@gobblenexttocentry}[4]{}
\newcommand*{\addsubsection}{%
    \addtocontents{toc}{\protect\qrr@gobblenexttocentry}%
    \subsection}
\begin{document}
\title{The boundary quotient for algebraic dynamical systems}

\author{Nathan Brownlowe}
\address{School of Mathematics and Applied Statistics \\ University of Wollongong \\ \hspace*{4mm}Australia}
\email{nathanb@uow.edu.au}

\author{Nicolai Stammeier}
\address{Mathematisches Institut \\ Westf\"{a}lische Wilhelms-Universit\"{a}t M\"{u}nster \\ \newline\hspace*{4mm}Germany}
\email{n.stammeier@wwu.de}


\thanks{The second author was supported by ERC through AdG $267079$.}

\date{}

\subjclass[2010]{}

\keywords{}

\begin{abstract}
We introduce the notion of accurate foundation sets and the accurate refinement property for right LCM semigroups. For right LCM semigroups with this property, we derive a more explicit presentation of the boundary quotient. In the context of algebraic dynamical systems, we also analyse finiteness properties of foundation sets which lead us to a very concrete presentation. Based on Starling's recent work, we provide sharp conditions on certain algebraic dynamical systems for pure infiniteness and simplicity of their boundary quotient.
\end{abstract}

\maketitle
\section*{Introduction}\label{intro}
\noindent All semigroups in this paper are assumed to be countable, discrete and left cancellative. Recall from \cite{BRRW} that a semigroup is \emph{right LCM} if the intersection of two principal right ideals is either empty or another principal right ideal. Examples of right LCM semigroups come from \emph{algebraic dynamical systems} $(\gpt)$, which consist of an action $\theta$ of a right LCM semigroup $P$ with identity by injective endomorphisms of a group $G$, subject to the condition that $pP \cap qP = rP$ implies $\theta_p(G) \cap \theta_q(G) = \theta_r(G)$ for all $p,q,r \in P$, see \cite{BLS2} for details and examples. It has been observed that the $C^*$-algebra $\CA[\gpt]$ associated to $(\gpt)$ in \cite{BLS2} is isomorphic to the full semigroup $C^*$-algebra of the right LCM semigroup $\gxp$, see \cite{BLS2}*{Theorem 4.4}. It is also know to be isomorphic to a Nica-Toeplitz algebra for a product system of right-Hilbert bimodules over the right LCM semigroup $P$, see \cite{BLS2}*{Theorem 7.9}. These two ways of viewing $\CA[\gpt]$ both indicate that this $C^*$-algebra tends to have proper ideals. Therefore, it is natural to search for a notion of a minimal quotient that is simple under reasonable assumptions on $(\gpt)$.

With regards to $C^*$-algebras of product systems of right-Hilbert bimodules, this quotient ought to be a Cuntz-Nica-Pimsner algebra. But so far only Nica covariance has been defined for product systems over right LCM semigroups, see \cite{BLS2}*{Definition 6.4}. Even worse, it does not seem to be clear what the general notion of Cuntz-Pimsner covariance for product systems over quasi-lattice ordered pairs should be, compare \cite{Fow2} and \cite{SY}. Recently, definitions for Cuntz-Pimsner covariance for product systems over Ore semigroups have been proposed in \cite{KS} and \cite{AM} which might lead to substantial progress in this direction. However, we remark that a right LCM semigroup can be far from satisfying the Ore condition.

There has been a successful attempt to identify the analogous quotient, called the \emph{boundary quotient}, for full semigroup $C^*$-algebras of right LCM semigroups with identity, see \cite{BRRW}. In fact, the authors also indicate how one could define this object for general semigroups, see \cite{BRRW}*{Remark 5.5}. Let us briefly review the idea behind this quotient, which goes back to \cite{CL2}: Firstly, recall from \cite{BLS1}*{Lemma 3.3} that the family of constructible right ideals $\CJ(S)$ for a right LCM semigroup with identity $S$ consists only of $\emptyset$ and the principal right ideals in $S$. A finite subset $F$ of $S$ is called a \emph{foundation set} if for every $s \in S$ there is $f \in F$ such that $sS \cap fS \neq \emptyset$. The boundary quotient $\CQ(S)$ of $C^*(S)$ is then obtained by imposing the additional relation $\prod_{s \in F} (1-e_{sS}) = 0$ for every foundation set $F$. It was shown in \cite{BRRW} that $\CQ(S)$ recovers classical objects such as $\CO_n$, provides an appealing perspective on Toeplitz and Cuntz-Pimsner algebras associated to self-similar actions, see \cite{BRRW}*{Subsection 6.4}, and may yield plenty of interesting new $C^*$-algebras related to Zappa-Sz\'{e}p products of monoids which had not been considered before.

As we know that $\gxp$ is right LCM for each algebraic dynamical system $(\gpt)$, the boundary quotient $\CQ(\gxp)$ deserves a closer examination. As it turns out, for most standard examples of such dynamics, the resulting right LCM semigroup $S=\gxp$ has two additional features: There are plenty of foundation sets $F$ such that $f_1S$ and $f_2S$ are disjoint for all distinct $f_1,f_2 \in F$. Such finite subsets $F$ will be called \emph{accurate foundation sets}. More importantly, every foundation set $F$ can be refined to an accurate foundation set $F_a$ in the sense that for every $f_a \in F_a$ there is $f \in F$ such that $f_a \in fS$. This feature will be named the \emph{accurate refinement property}, or property (AR) for short. If a right LCM semigroup $S$ has property (AR), then the defining relation 
\[\begin{array}{ll} 
\prod\limits_{f \in F} (1-e_{fS}) = 0 &\hspace*{4mm}\text{ for every foundation set } F\vspace*{2mm}\\
\multicolumn{2}{l}{\hspace*{-14mm}\text{can be replaced by the more familiar-looking relation}}\vspace*{2mm}\\
\sum\limits_{f \in F_a} e_{fS} = 1 &\hspace*{4mm}\text{ for every accurate foundation set } F_a, 
\end{array}\] 
see Proposition~\ref{prop:AR property -> familiar boundary quot pic}. We show that property (AR) is enjoyed by various types of known right LCM semigroups. 

If we are given additional information on $S$ in the sense that $S= \gxp$ for a (nontrivial) algebraic dynamical system $(\gpt)$, then we can say more about the structure of (accurate) foundation sets and hence about property (AR). This is the aim of Section~\ref{sec:ADS}, where we present a useful sufficient criterion on $(\gpt)$ for $\gxp$ to have property (AR), see Proposition~\ref{prop:refining FS to AFS for ADS}. As an application, we show that $\gxp$ has property (AR) provided that $P$ is directed or that incomparable elements in $P$ have disjoint principal right ideals, where we use $p \geq q :\Leftrightarrow p \in qP$, see Corollary~\ref{cor:AR for some gxp}. We note that these two options include the cases where $P$ is a group, an abelian semigroup, a free semigroup, or a Zappa-Sz\'{e}p product $X^* \bowtie G$ for some self-similar action $(G,X)$ as in \cite{BRRW}. In particular, the semigroups $\gxp$ arising from irreversible algebraic dynamical systems as defined in \cite{Sta1} have property (AR). To achieve Proposition~\ref{prop:refining FS to AFS for ADS} and hence the aforementioned results, we use a celebrated lemma of B.~H.~Neumann from \cite{Neu} about finite covers of groups by cosets of subgroups to conclude that it suffices to consider (accurate) foundation sets $F$ for $\gxp$ such that the index of $\theta_p(G)$ of $G$ is finite for all $(g,p) \in F$, see Proposition~\ref{prop:FS for gxp - pfin suffices}. 

Let $(\gpt)$ satisfy the assumptions of Proposition~\ref{prop:refining FS to AFS for ADS}, so that $\gxp$ has property (AR). If we combine the alternative presentation for $\CQ(\gxp)$ obtained in Proposition~\ref{prop:AR property -> familiar boundary quot pic} with the dynamic description $\CA[\gpt]$ of $C^*(\gxp)$, we arrive at a presentation of $\CQ(\gxp)$ which emphasises that it originates from a dynamical system, see Corollary~\ref{cor:identifying Q and O}. However, we observe that $\CQ(\gxp)$ may fail to admit a natural representation on $\ell^2(G)$: The representation exists if and only if $P$ is directed, see Proposition~\ref{prop:rep on G}. This is somewhat surprising as $\ell^2(G)$ is arguably a very natural state space for a dynamical system given by injective group endomorphisms of a group $G$. Nevertheless, we immediately get that the boundary quotient $\CQ(\gxp)$ is canonically isomorphic to the $C^*$-algebra $\CO[\gpt]$ studied in \cite{Sta1} for irreversible algebraic dynamical systems $(\gpt)$, see Corollary~\ref{cor:consistency for IADS}. Thus, one can regard the present paper as a continuation, and a vast generalisation of essential parts from \cite{Sta1}, though the employed techniques are quite different. 

The topic we have not addressed so far is simplicity and pure infiniteness of $\CQ(\gxp)$. In \cite{Sta1}, the author showed that $\CO[\gpt]$ is purely infinite and simple provided a certain amenability condition and $\bigcap_{p \in P} \theta_p(G) = \{1\}$ hold, see \cite{Sta1}*{Theorem 3.26}. But it remained unclear whether these sufficient conditions where also necessary for irreversible algebraic dynamical systems. They were known to be sharp for the case where $G$ is abelian and $G/\theta_p(G)$ is finite for all $p \in P$ by \cite{Sta2}*{Corollary 5.10}.

Fortunately, Starling has recently applied deep results from \cite{EP} and \cite{BOFS} precisely to boundary quotients of right LCM semigroups to obtain a characterisation of simplicity, see \cite{Star}*{Theorem 4.12}. We analyse his conditions in the context of algebraic dynamical systems in order to express them directly in terms of $(\gpt)$. This leads to much more explicit conditions in important special cases, see Corollary~\ref{cor:applying Starling's result to reg ADS with trivial P*}. Mostly, we restrict our attention to the case where $P$ is right cancellative, simply because we lack examples for algebraic dynamical systems with a right LCM $P$ that is not right cancellative. Regarding simplicity of $\CO[\gpt]$ for irreversible algebraic dynamical systems, we now achieve a proper characterisation, see Corollary~\ref{cor:simplicity for IADS}, and the conditions turn out to be slightly milder than in \cite{Sta1}. Finally, we address classifiability of $\CQ(\gxp)$ in Theorem~\ref{thm:UCT Kirchberg algs}.

The paper is organised as follows: In Section~\ref{sec:background} we recall the notions of the boundary quotient and the inverse semigroup of a right LCM semigroup as well as the key result from \cite{Star} concerning simplicity. Accurate foundation sets and property (AR) are introduced and studied for certain right LCM semigroups in Section~\ref{sec:AFS}. In Section~\ref{sec:ADS} we focus on establishing property (AR) for right LCM semigroups constructed from algebraic dynamical systems and analyse finiteness properties of (accurate) foundation sets. In the final Section~\ref{sec: BQ for ADS}, we start off with some observations concerning basic structural properties of the boundary quotient for algebraic dynamical systems $(\gpt)$, before we discuss simplicity and pure infiniteness.


\section{Background}\label{sec:background}
\noindent In this section we give the necessary background on semigroups and their 
$C^*$-algebras, including the full semigroup $C^*$-algebra $C^*(S)$, and its 
boundary quotient $\CQ(S)$. In the second subsection we discuss Starling's 
results from \cite{Star}, where he studied the boundary quotient of right 
LCM semigroups using an inverse semigroup (and groupoid) approach.

\subsection{The boundary quotient for right LCM 
semigroups}\label{subsec:boundary quots}~\\
\noindent Within this section, we briefly recall the construction of $C^*(S)$ from \cite{Li1} and the notion of the boundary quotient $\CQ(S)$ of $C^*(S)$ for right LCM semigroups from \cite{BRRW}*{Definition 5.1}.

In \cite{Li1}, the full semigroup $C^*$-algebra $C^*(S)$ of a discrete and left cancellative semigroup $S$ is defined using additional relations for projections $e_X$ arising from right ideals $X$ in $S$ that are part of the \emph{family of constructible right ideals} $\CJ(S)$. This is the smallest family of right ideals of $S$ satisfying
\begin{enumerate}[(a)]
 \item $S, \emptyset \in \CJ(S)$ and
 \item $X \in \CJ(S)$ and $s \in S$ implies $sX,s^{-1} X \in \CJ(S)$.
\end{enumerate}
The general form of a constructible right ideal is given in \cite{Li1}*{Equation~(5)}. We note that $\CJ(S)$ is also closed under finite intersections, a fact that can be derived from (a) and (b) using $sS \cap tS = s(s^{-1}(tS))$. 

\begin{definition}\label{def: Li's full algebra}
Let $S$ be a discrete left cancellative semigroup. The {\em full semigroup
$C^*$-algebra} $C^*(S)$ is the universal $C^*$-algebra generated by isometries
$(v_s)_{s\in S}$ and projections $(e_X)_{X\in\CJ(S)}$ satisfying
\[\begin{array}{llcll}
\text{(L1)} & v_sv_t=v_{st}, && \text{(L2)} & v_se_Xv_s^*=e_{sX},\vspace*{2mm}\\
\text{(L3)} & e_S=1, e_\emptyset=0, &\text{ and }& \text{(L4)} & e_Xe_Y=e_{X\cap Y},
\end{array}\]
for all $s,t\in S$, $X,Y\in\CJ(S)$.
\end{definition}

\noindent Note that (L2) and (L3) give $v_sv_s^*=e_{sS}$ for all $p\in S$. If $S$ is a right LCM semigroup with identity, then $\CJ(S) = \{sS \mid s \in S\} \cup \{\emptyset\}$, see \cite{BLS1}*{Lemma 3.3}. From now on, let $S$ be a right LCM semigroup with identity.

\begin{definition}\label{def:foundation sets}
A finite subset $F \subset S$ is called a \emph{foundation set} for $S$ if, for every $s \in S$, there exists $t \in F$ satisfying $sS \cap tS \neq \emptyset$. The collection of foundation sets for $S$ is denoted by $\FF(S)$.
\end{definition}

\begin{remark}\label{rem:foundation sets}
We note the following simple observations:
\begin{enumerate}[(a)]
\item If $S$ is directed, then every finite subset of $S$ is a foundation set.
\item $F \subset S$ is a foundation set if and only if it is finite and $sS \cap \bigcup_{t \in F} tS \neq \emptyset$ for all $s \in S$. Since $S$ is right LCM, this means that for each principal right ideal $sS$, there is $s' \in S$ such that $ss' \in \bigcup_{t \in F}tS$. So this union can be thought of as a cofinal subset of $S$ with respect to the partial order on $S$ induced by reverse inclusion of associated principal right ideals.
\end{enumerate}
\end{remark}

\begin{definition}\label{def:boundary quotient}
The \emph{boundary quotient} $\CQ(S)$ is the quotient of $C^*(S)$ by
\begin{equation}\label{eq:Q condition}
\begin{array}{l}\prod\limits_{s \in F}(1-e_{sS}) = 0 \hspace*{6mm}\text{for every foundation set } F.\end{array}\tag{Q}
\end{equation}
\end{definition}

\noindent We shall denote the images of the isometries $v_s$ and the projections $e_{sS}$ for $s \in S$ under the quotient map by $\bar{v}_s$ and $\bar{e}_{sS}$, respectively.

We point out that \eqref{eq:Q condition} has the flavour of the summation relation used for $\CO_n, 2 \leq n < \infty$. This is the essence of Proposition~\ref{prop:AR property -> familiar boundary quot pic}. 

\subsection{The inverse semigroup approach}\label{subsec: inv semigroup 
approach to BQ}~\\
In \cite{Star} Starling uses techniques and machinery from inverse semigroups 
and groupoids to study the boundary quotient $\CQ(S)$ of a right LCM semigroup 
$S$. In particular, he applies the 
machinery from \cite{EP} and the results of \cite{BOFS}. In this section we 
recall the construction of an inverse semigroup $I(S)$ for a discrete, left 
cancellative semigroup $S$, and then some of the terminology, 
notation and results from \cite{Star}.

\begin{definition}\label{def:inv sgp for left canc sgp}
For a discrete, left cancellative semigroup $S$ let $I(S)$ be the multiplicative 
subsemigroup of $C^*(S)$ generated by $0$ and $v_s,v_s^*$ for $s \in 
S$. The set of idempotents in $I(S)$ is denoted by $E(S)$.
\end{definition}

\begin{lemma}\label{lem:I(S) is an inv sgp}
$I(S)$ is an inverse semigroup with identity and zero. $E(S)$ is given by $\{ 
e_X \mid X \in \CJ(S)\}$, where $\CJ(S)$ denotes the family of constructible 
right ideals in $S$. If $S$ is right LCM, then $I(S)$ equals $\{0\} \cup \{ 
v_sv_t^* \mid s,t \in S\}$ and $E(S) = \{0\} \cup \{e_{sS} \mid s \in S\}$.
\end{lemma}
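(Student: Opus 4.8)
The plan is to verify the claims in the natural order: first that $I(S)$ is closed under the operations making it an inverse semigroup, then identify $E(S)$, and finally specialise to the right LCM case. First I would recall that an inverse semigroup is a semigroup in which every element $a$ has a unique inverse $a^*$ with $aa^*a = a$ and $a^*aa^* = a^*$, equivalently a regular semigroup whose idempotents commute. Since $I(S)$ is a $*$-closed multiplicative subsemigroup of the $C^*$-algebra $C^*(S)$ (it is generated by $0$ and the $v_s, v_s^*$, and this generating set is visibly closed under the adjoint), every element $a \in I(S)$ is a partial isometry: indeed products of isometries and co-isometries in a $C^*$-algebra, built up using $v_s^* v_s = 1$ from (L1)--(L3), are partial isometries, so $aa^*a = a$. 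Thus $I(S)$ is regular with involution $a \mapsto a^*$, and it remains only to check that idempotents commute. An idempotent $e = e^* = e^2$ in a $C^*$-algebra is a projection; the projections arising inside $I(S)$ should all lie among the $e_X$, and these commute by (L4) since $e_X e_Y = e_{X \cap Y} = e_Y e_X$. So the crux is the identification $E(S) = \{e_X \mid X \in \CJ(S)\}$.

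For that identification I would argue both inclusions. The inclusion $\{e_X \mid X \in \CJ(S)\} \subseteq E(S)$ follows because each $e_X$ is a self-adjoint idempotent lying in $I(S)$: for $X = sS$ this is $v_s v_s^* \in I(S)$ by (L2)--(L3) and the remark after Definition~\ref{def: Li's full algebra}, and the general constructible ideal $X = s_1 s_2^{-1} \cdots s_n S$ has $e_X$ expressible as an alternating product $v_{s_1} v_{s_2}^* \cdots$ of the generators by (L2), using the description of constructible ideals in \cite{Li1}*{Equation~(5)}; the empty ideal gives $0$. For the reverse inclusion $E(S) \subseteq \{e_X \mid X \in \CJ(S)\}$, I would take a general word $w = v_{s_1}^{\epsilon_1} \cdots v_{s_n}^{\epsilon_n}$ in the generators (with $\epsilon_i \in \{1, *\}$, possibly with a factor $0$) and show using (L1)--(L4) that any such product can be brought into the standard form $v_s e_X v_t^*$ for suitable $s, t \in S$ and $X \in \CJ(S)$ — this is the standard normal-form computation for $I(S)$, carried out by repeatedly pushing projections to the right using (L2), i.e. $v_s^* v_t = v_s^*(v_t v_t^* v_t) = (v_s^* v_t v_t^*) v_t$ and $e_X v_s = v_s e_{s^{-1}X}$, and merging adjacent projections via (L4). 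An element of this form $v_s e_X v_t^*$ is idempotent and self-adjoint precisely when it collapses to a projection, and one checks this forces it to equal some $e_Y$ with $Y \in \CJ(S)$ (for instance $Y = s(X \cap s^{-1}sS)$ with the constraint $s = t$ on supports). I expect this normal-form reduction to be the main technical obstacle, though it is routine and essentially appears in \cite{Li1} and \cite{Nor} for semigroup $C^*$-algebras.

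Finally, for the right LCM case I would invoke \cite{BLS1}*{Lemma 3.3}, already quoted in the excerpt, which gives $\CJ(S) = \{sS \mid s \in S\} \cup \{\emptyset\}$; substituting this into the description of $E(S)$ just proved yields $E(S) = \{0\} \cup \{e_{sS} \mid s \in S\}$ immediately. For the statement $I(S) = \{0\} \cup \{v_s v_t^* \mid s, t \in S\}$, I would again use the normal form $v_s e_X v_t^*$: when $X = \emptyset$ we get $0$, and when $X = rS$ we have $v_s e_{rS} v_t^* = v_s v_r v_r^* v_t^* = v_{sr} v_{tr}^*$, so every nonzero element of $I(S)$ has the form $v_a v_b^*$; conversely every such $v_a v_b^*$ clearly lies in $I(S)$. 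The fact that $I(S)$ has an identity is witnessed by $v_e = 1$ where $e \in S$ is the identity element (using (L1)), and $0 \in I(S)$ by construction is an absorbing zero, completing the proof.
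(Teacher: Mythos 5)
There is a genuine gap in the step that identifies $E(S)$ for a \emph{general} left cancellative semigroup $S$: the claimed normal form, namely that every word in the generators can be brought to the shape $v_s e_X v_t^*$ with $s,t \in S$ and $X \in \CJ(S)$, is false outside the right LCM setting. That normal form is essentially equivalent to the collapsing $v_s^*v_t \in \{0\} \cup \{v_{s'}v_{t'}^*\}$, which is exactly what the right LCM hypothesis buys; for general $S$ the elements of $I(S)$ are genuinely long alternating products. Concretely, take the numerical monoid $S = \{0,2,3,4,\dots\}$ under addition (left cancellative, with identity, not right LCM) and the element $v_2^*v_3 \in I(S)$. In the left regular representation on $\ell^2(S)$ this acts as $\xi_x \mapsto \xi_{x+1}$ with domain $S \setminus \{0\}$, whereas any $v_a e_X v_b^*$ acts as $\xi_{b+z} \mapsto \xi_{a+z}$, $z \in X$; matching the two would force $a-b=1$ and $b+X = \{2,3,4,\dots\}$, which is impossible since $1 \notin S$ and $b \geq 2$. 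Since the universal algebra surjects onto the reduced one, $v_2^*v_3$ is not of the form $v_a e_X v_b^*$ in $C^*(S)$ either. So the reverse inclusion $E(S) \subseteq \{e_X \mid X \in \CJ(S)\}$, which you prove via this normal form, is not established for general $S$. A secondary symptom of the same missing step: the assertion that ``products of isometries and co-isometries in a $C^*$-algebra are partial isometries'' is not a valid general principle (products of partial isometries need not be partial isometries); it holds here only because of the specific relations.

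The repair is to argue as the paper does, via range projections rather than a two-sided normal form: using (L1) any word reduces to an alternating product $w = v_{s_1}^*v_{s_2}\cdots v_{s_{n-1}}^*v_{s_n}$ (up to which type of factor occurs first/last), and an induction with (L2), (L4) and the derived relation $v_s^*e_Xv_s = e_{s^{-1}X}$ shows $ww^* = e_X$ with $X = s_1^{-1}(s_2(\cdots s_{n-1}^{-1}(s_nS)\cdots)) \in \CJ(S)$. This single computation gives everything you need: each $w$ is a partial isometry with source and range projections among the commuting family $\{e_X\}$, so $I(S)$ is an inverse semigroup with $w^\dagger = w^*$; an idempotent $w \in I(S)$ has norm at most one, hence is a projection, hence equals $ww^* = e_X$, giving $E(S) = \{e_X \mid X \in \CJ(S)\}$. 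Your treatment of the right LCM case is then fine in spirit, but it should be based on the collapsing $v_s^*v_t = v_s^*e_{sS\cap tS}v_t = v_{s'}v_{t'}^*$ (or $0$) when $sS \cap tS = sS\,s' = tS\,$-type right LCMs exist, i.e.\ $ss'=tt'$, which by induction reduces every word to $0$ or $v_av_b^*$, together with $\CJ(S) = \{\emptyset\} \cup \{sS \mid s \in S\}$ from \cite{BLS1}*{Lemma 3.3} --- not on the general normal form, which is unavailable.
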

\begin{proof}
The first claim is straightforward. If we consider an arbitrary finite product 
$v_{s_1}^*v_{s_2}\cdots v_{s_{n-1}}^*v_{s_n}$, then its range projection
\[v_{s_1}^*v_{s_2}\cdots v_{s_{n-1}}^*v_{s_n}(v_{s_1}^*v_{s_2}\cdots 
v_{s_{n-1}}^*v_{s_n})^* = e_{s_1^{-1}(s_2(\dots (s_{n-1}^{-1}(s_nS)\dots )}\]
is the projection corresponding to the constructible right ideal 
\[s_1^{-1}(s_2(\dots (s_{n-1}^{-1}(s_nS)\dots ) \in \CJ(S).\] 
Hence we get $E(S) = \{ e_X \mid X \in \CJ(S)\}$. Now suppose $S$ is right 
LCM. Then 
$v_s^*v_t = v_s^*e_{sS \cap tS}v_t$ vanishes unless $sS \cap tS = rS, 
r=ss'=tt'$ for some $r,s',t' \in S$, in which case we get $v_s^*v_t = 
v_{s'}v_{t'}^*$. Finally, we know that $\CJ(S) = \{\emptyset\} \cup \{sS \mid 
s \in S\}$ for right LCM semigroups. 
\end{proof}

\noindent $I(S)$ can also be defined via partial bijections $\Lambda_s:S \to 
sS$ and their partial inverses $\Lambda_s^*:sS \to S$ since $S$ is left 
cancellative. 

To every inverse semigroup $I$, we can associate the \emph{tight groupoid} 
$\CG_{\text{tight}}(I)$, which is a groupoid of germs for a certain action of 
the inverse semigroup on a particular spectrum, see \cite{Exe3} for details. For $I(S)$ of a right LCM semigroup $S$, the boundary quotient 
$\CQ(S)$ is isomorphic to $C^*_{\text{tight}}(I(S)) \cong 
C^*(\CG_{\text{tight}}(I(S)))$, see \cite{Star}*{Theorem 3.7 and Subsection 4.1}.

We need two more concepts before we can state Starling's main theorem on 
simplicity of $\CQ(S)$ for right LCM $S$. Following the 
convention of 
\cite{Star}, we denote
\begin{equation}\label{eq: square brackets}
[s,t]:=v_sv_t^*. 
\end{equation}
Note that $[s,t] = 
[s',t']$ holds if and only if we have $s'=sx$ and $t'=tx$ for some $x \in S^*$.

The following is
\cite{Star}*{Definition 4.6}, which is inspired by the work of Crisp and Laca, 
see \cite{CL2}*{Definition 5.4}.

\begin{definition}\label{def:core for right LCM}
For a right LCM semigroup $S$ with identity, the \emph{core} of $S$ is the set 
$S_0 := \{ s \in S \mid sS \cap tS \neq \emptyset \text{ for all } t \in S\}$. 
\end{definition}

\noindent It is immediate that the group of invertible elements $S^*$ in $S$ is a subset of the core $S_0$.

We now state Starling's \cite{Star}*{Theorem~4.12}, although we have not yet 
written down conditions \eqref{eq:condition H} and \eqref{eq:condition EP}; we 
will do so after the statement.

\begin{theorem}\label{thm:starling simple}
Let $S$ be a right LCM semigroup with identity which satisfies 
\eqref{eq:condition H}. Then $\CQ(S)$ is simple if and only if
\begin{enumerate}
\item[\textnormal{(1)}] $\CQ(S) \cong C^*_r(\CG_{\text{tight}}(I(S)))$, and 
\item[\textnormal{(2)}] for all $s,t \in S_0$, the element $[s,t]$ satisfies 
\eqref{eq:condition EP}.
\end{enumerate}
\end{theorem}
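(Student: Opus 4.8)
Since the statement is a verbatim restatement of \cite{Star}*{Theorem~4.12}, in the paper itself there is nothing to prove beyond pointing to Starling's work; I nonetheless outline the strategy one would follow. The starting point is the identification $\CQ(S) \cong C^*_{\text{tight}}(I(S)) \cong C^*(\CG_{\text{tight}}(I(S)))$ recorded above, which reduces the problem to simplicity of the $C^*$-algebra of an \'etale groupoid. The plan is then to invoke the standard simplicity criterion for such algebras: $C^*(\CG)$ is simple precisely when $\CG$ is minimal, $\CG$ is effective, and the canonical quotient $C^*(\CG) \onto C^*_r(\CG)$ is an isomorphism. Clause~(1) of the statement is literally this last requirement, so the real work is to match minimality with \eqref{eq:condition H} and effectiveness with clause~(2).

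For the first match, I would show that \eqref{eq:condition H} --- a combinatorial hypothesis on $S$ designed to force minimality of $\CG_{\text{tight}}(I(S))$ (and whatever regularity, such as Hausdorffness, the groupoid machinery requires) --- does indeed yield minimality. Here the explicit descriptions $\CJ(S) = \{\emptyset\} \cup \{sS \mid s \in S\}$ and $E(S) = \{0\} \cup \{e_{sS} \mid s \in S\}$ from Lemma~\ref{lem:I(S) is an inv sgp} are what make the tight spectrum of $E(S)$, and the orbits of the $I(S)$-action on it, computable directly from the semigroup structure.

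For the second match, I would use the framework of \cite{EP}: effectiveness of a groupoid of germs is governed by a condition on those elements of the inverse semigroup whose germs fix points of the unit space, and \eqref{eq:condition EP} is precisely the transcription of that condition for the elements $[s,t] = v_s v_t^*$ of $I(S)$. The key structural input is that all relevant isotropy of $\CG_{\text{tight}}(I(S))$ is accounted for by germs of elements $[s,t]$ with $s,t \in S_0$: if $s \notin S_0$ or $t \notin S_0$ the germ of $[s,t]$ cannot fix a tight filter, so effectiveness reduces to testing \eqref{eq:condition EP} only for $s,t \in S_0$, which is exactly clause~(2). Results from \cite{BOFS} are then invoked to assemble the pieces and, together with clause~(1), to complete the equivalence.

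The hard part will be this last reduction: proving that isotropy outside the core is automatically trivial, so that \eqref{eq:condition EP} need not be tested on all of $S$. This requires a careful analysis of the tight filters on $E(S)$ and of how the partial bijections underlying $\Lambda_s \Lambda_t^*$ act on them, exploiting that $s \notin S_0$ produces $u \in S$ with $sS \cap uS = \emptyset$ and hence forces any germ defined through such an element to move its base point. Once that is settled, what remains is routine assembly of the machinery of \cite{EP} and \cite{BOFS} with the concrete groupoid model for $\CQ(S)$.
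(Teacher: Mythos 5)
You are right on the main point: the paper offers no proof of this statement at all --- it is quoted verbatim from \cite{Star}*{Theorem 4.12} --- so pointing to Starling is exactly what the paper does, and your skeleton (realise $\CQ(S)$ as $C^*(\CG_{\text{tight}}(I(S)))$, apply the simplicity criterion for \'etale groupoid $C^*$-algebras of \cite{BOFS} together with the isotropy analysis of \cite{EP}, with clause (1) handling the full-versus-reduced issue and clause (2), via weakly fixed idempotents, handling effectiveness, and with the reduction of the isotropy test to $s,t \in S_0$ being the genuinely semigroup-theoretic step) is the correct outline of his argument.

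One correction to your division of labour, though: \eqref{eq:condition H} is not a minimality hypothesis, and ``matching minimality with \eqref{eq:condition H}'' is not where the work lies. As the paper itself records via \cite{Star}*{Proposition 4.1}, \eqref{eq:condition H} characterises \emph{Hausdorffness} of $\CG_{\text{tight}}(I(S))$, which is what must be in place before the criterion of \cite{BOFS} (simple if and only if minimal, effective, and full equals reduced) is applicable at all. Minimality, by contrast, costs nothing for right LCM semigroups: foundation sets furnish covers of $E(S)\setminus\{0\}$, the action of $I(S)$ on the tight spectrum is irreducible, and every tight filter has dense orbit, so no hypothesis on $S$ is spent on it. An attempt to extract minimality from \eqref{eq:condition H} would be misguided rather than merely laborious. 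With the labels corrected --- \eqref{eq:condition H} gives Hausdorffness, minimality is automatic, clause (2) encodes effectiveness through the weakly-fixed-idempotent condition of \cite{EP} restricted to the core (that restriction being carried out in Starling's Lemmas quoted in Subsection~\ref{subsec: inv semigroup approach to BQ}), and clause (1) is weak containment --- your sketch does match Starling's proof.
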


\noindent Let us explain the conditions \eqref{eq:condition H} and \eqref{eq:condition EP}. Condition \eqref{eq:condition H} characterises Hausdorffness of the tight groupoid of $I(S)$, see \cite{Star}*{Proposition 4.1}:

\begin{equation}\label{eq:condition H}
 \begin{array}{l} \text{For all $s,t \in S$ with $sS \cap tS \neq \emptyset$, 
 there is a finite subset $F \subset S$}\\ 
 \text{with $sf=tf$ for all $f \in F$ such that the following holds: If $r \in 
 S$}\\
 \text{satisfies $sr=tr$, then there exists $f \in F$ with $rS \cap fS \neq 
 \emptyset$.}
 \end{array}\tag{H}
\end{equation}

\begin{remark}\label{rem:condition H general}
If we have $s=t$, then we can simply choose $F=\{1\}$ . Now if $S$ is right 
cancellative, then $sr=tr$ implies $s=t$. Hence \eqref{eq:condition H} holds 
whenever $S$ is right cancellative. 
\end{remark}

\noindent To present condition \eqref{eq:condition EP}, we need to define the notion of weakly fixed idempotents in inverse semigroups, see \cite{Star}*{Definition 4.8}.

\begin{definition}\label{def:weakly fixed}
Let $I$ be an inverse semigroup, $a \in I$ and $e \in E(I)$ such that $a^*a 
\geq e$. The idempotent $e$ is said to be \emph{weakly fixed} by $a$ if 
$afa^*f \neq 0$ for all $f \in E(I)\setminus \{0\}$ with $f \leq e$.
\end{definition}

\noindent Since we are interested in inverse semigroups built from right LCM 
semigroups with identity, let us recall the conclusion of \cite{Star}*{Lemma 
4.9}:

\begin{lemma}\label{lem:weakly fixed for right LCM}
Let $S$ be a right LCM semigroup with identity and $I(S)$ the associated 
inverse semigroup. $[s,t]$ fixes $[tt',tt']$ weakly if and only if $st'rS \cap 
tt'rS \neq \emptyset$ for all $r \in S$. 
\end{lemma}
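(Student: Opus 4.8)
The plan is to unravel Definitions~\ref{def:weakly fixed} and~\ref{def:inv sgp for left canc sgp} and reduce the statement to a short computation with the relations (L1)--(L4). Write $a = [s,t] = v_sv_t^*$ and $e = [tt',tt'] = v_{tt'}v_{tt'}^* = e_{tt'S}$. First I would check that the standing hypothesis $a^*a \geq e$ of Definition~\ref{def:weakly fixed} is automatic here: since $v_s$ is an isometry, $a^*a = v_tv_s^*v_sv_t^* = v_tv_t^* = e_{tS}$, and $e_{tS}e_{tt'S} = e_{tS \cap tt'S} = e_{tt'S}$ by (L4) because $tt'S \subseteq tS$, so $e_{tS} \geq e_{tt'S}$. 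Hence the statement ``$[s,t]$ fixes $[tt',tt']$ weakly'' is meaningful for all choices, and by Definition~\ref{def:weakly fixed} it is equivalent to $afa^*f \neq 0$ for every nonzero $f \in E(S)$ with $f \leq e$.

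Next I would parametrise these idempotents. By Lemma~\ref{lem:I(S) is an inv sgp}, $E(S) = \{0\} \cup \{e_{xS} \mid x \in S\}$; and by (L4) a nonzero $e_{xS}$ satisfies $e_{xS} \leq e_{tt'S}$ exactly when $xS \subseteq tt'S$, i.e.\ when $x = tt'r$ for some $r \in S$, while conversely every $e_{tt'rS}$ is such an idempotent. So the weak-fixing condition is equivalent to $a\,e_{tt'rS}\,a^*\,e_{tt'rS} \neq 0$ for all $r \in S$, and this may be tested one $r$ at a time.

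The core computation uses (L2) together with left cancellativity. Since $tt'rS = t(t'rS)$, we get $v_t^*e_{tt'rS}v_t = e_{t^{-1}(tt'rS)} = e_{t'rS}$, where $t^{-1}(tt'rS) = t'rS$ precisely because $S$ is left cancellative; then $v_se_{t'rS}v_s^* = e_{st'rS}$ by (L2), so $a\,e_{tt'rS}\,a^* = e_{st'rS}$. Applying (L4) once more gives $a\,e_{tt'rS}\,a^*\,e_{tt'rS} = e_{st'rS}\,e_{tt'rS} = e_{st'rS \cap tt'rS}$. Because $S$ is right LCM, $st'rS \cap tt'rS$ is either $\emptyset$ or a principal right ideal, and since $e_\emptyset = 0$ while $e_{pS} = v_pv_p^* \neq 0$ for every $p \in S$ (as $v_p$ is an isometry), this idempotent is nonzero if and only if $st'rS \cap tt'rS \neq \emptyset$. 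Combining the three reductions yields the lemma.

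I do not expect a genuine obstacle: the only points needing care are the identity $v_t^*e_{tt'rS}v_t = e_{t'rS}$, which rests on rewriting the ideal as $t(t'rS)$ and invoking left cancellativity, and the completeness of the parametrisation of the nonzero idempotents dominated by $e$ by the family $e_{tt'rS}$, $r \in S$. If one prefers, the same manipulation can be carried out with the partial bijections $\Lambda_s \colon S \to sS$ of Lemma~\ref{lem:I(S) is an inv sgp} in place of the isometries $v_s$, which makes the computation of the domain $\Lambda_t^{-1}(tt'rS) = t'rS$ transparent.
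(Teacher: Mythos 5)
Your argument is correct, and there is in fact no in-paper proof to compare it with: the paper simply imports this statement as the conclusion of Starling's Lemma 4.9, so your direct verification inside $C^*(S)$ is exactly the argument one would expect. Two small points of hygiene. First, the identity $v_t^*e_{tt'rS}v_t = e_{t'rS}$ is cleanest if you write $e_{tt'rS} = e_{t(t'rS)} = v_t e_{t'rS}v_t^*$ via (L2) and then use $v_t^*v_t=1$; the general relation $v_t^*e_Xv_t = e_{t^{-1}X}$ that you invoke is not among (L1)--(L4), although it is derivable, and in your situation it is unnecessary. Second, in your parametrisation of the nonzero idempotents $f \le e_{tt'S}$, the implication $e_{xS}\le e_{tt'S} \Rightarrow xS \subseteq tt'S$ is not a formal consequence of (L4) alone; it uses faithfulness of $X \mapsto e_X$ (true via the left regular representation on $\ell^2(S)$). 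You can bypass this entirely: if $f = e_{xS}$ is nonzero and $f \le e_{tt'S}$, then $f = e_{xS}e_{tt'S} = e_{xS\cap tt'S}$ by (L4), and since $S$ is right LCM this ideal is nonempty (else $f = e_\emptyset = 0$) and contained in $tt'S$, hence already of the form $tt'rS$; conversely each $e_{tt'rS}$ is a nonzero idempotent below $e_{tt'S}$. With either repair, your computation $af a^* f = e_{st'rS \cap tt'rS}$ together with $e_\emptyset = 0$ and $e_{pS}\neq 0$ yields the stated equivalence.
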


\noindent As stated in \cite{Star}*{Lemma 4.11}, condition \eqref{eq:condition 
EP} for $[s,t] \in I(S)$ is given by:
\begin{equation}\label{eq:condition EP}
\begin{array}{l} \text{Whenever $[s,t]$ fixes $[tt',tt']$ weakly, there is a foundation set $F$}\\
\text{such that $st'f=tt'f$ for all  $f \in F$.}
\end{array}\tag{EP}
\end{equation}

\begin{remark}\label{rem:EP for special cases}
There are some special cases:
\begin{enumerate}[(a)]
\item If there is no $t' \in S$ such that $[s,t]$ fixes $[tt',tt']$ weakly, then \eqref{eq:condition EP} holds for $[s,t]$.
\item For $s=t$ the foundation set $F= \{1\}$ gives \eqref{eq:condition EP} for $[s,t]$. 
\item Suppose $S$ is right cancellative and $[s,t] \in I(S)$ fixes some $[tt',tt']$ weakly. If $[s,t]$ satisfies \eqref{eq:condition EP}, then $s$ and $t$ have to be the same.
\end{enumerate}
 \end{remark}

\section{Foundation sets made accurate}\label{sec:AFS}	
\noindent Throughout this section let $S$ be a right LCM semigroup with identity. We will now introduce accurate foundation sets and accurate refinements of foundation sets. These lead to a clearer picture of the boundary quotient $\CQ(S)$ provided that accurate refinements are always possible. This feature of $S$ is called the accurate refinement property, or property (AR) for short. We show that many known right LCM semigroups have property (AR). In fact, we are not aware of an example of a right LCM semigroup that does not have property (AR).

\begin{definition}\label{def:AFS}
$F \in \FF(S)$ is called an \emph{accurate foundation set} for $S$ if $sS \cap tS = \emptyset$ holds for all $s,t \in F, s \neq t$. The collection of accurate foundation sets is denoted by $\FF_a(S)$.
\end{definition}

\begin{remark}\label{rem:AFS for directed right LCM}
If $S$ is directed, then $\FF(S)$ consists of all finite subsets of $S$, see Remark~\ref{rem:foundation sets}~(a), and it is apparent that $\FF_a(S)$ is equal to $S$ as a set. For general right LCM $S$, accurate foundation sets consisting of a single point correspond to elements of the core $S_0$ of $S$.
\end{remark}

\begin{definition}\label{def:AR property}
$S$ is said to have the \emph{accurate refinement property}, or property (AR) for short, if, for all $F \in \FF(S)$, there exists $F_a \in \FF_a(S)$ such that $F_a \subset FS$. This means that for every $f_a \in F_a$, there is a $f \in F$ with $f_a \in fS$.
\end{definition}

\begin{proposition}\label{prop:AR property -> familiar boundary quot pic}
$\sum_{s \in F_a} \bar{e}_{sS} = 1$ holds for all accurate foundation sets $F_a$ for $S$. If $S$ has property \textnormal{(AR)}, then $\CQ(S)$ is the quotient of $C^*(S)$ by the relation 
\begin{equation}\label{eq:Q_a condition}
\begin{array}{l}\sum\limits_{s \in F} e_{sS} = 1 \hspace*{6mm}\textnormal{ for every accurate foundation set } F.\end{array}\tag{$\text{Q}_a$}
\end{equation}
In other words, $\CQ(S)$ is the universal $C^*$-algebra generated by a representation of $S$ by isometries $\bar{v}_s$ and projections $(\bar{e}_{X})_{X \in \CJ(S)}$ subject to the relations \textnormal{(L1) -- (L4)}, and \eqref{eq:Q_a condition}.
\end{proposition}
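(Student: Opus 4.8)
The plan is to prove the two claims in turn, the first being a statement about accurate foundation sets in any right LCM semigroup with identity, and the second a consequence under property (AR). For the first claim, fix an accurate foundation set $F_a$. The key point is that the projections $e_{sS}$ for $s \in F_a$ are pairwise orthogonal: for distinct $s,t \in F_a$ we have $e_{sS}e_{tS} = e_{sS \cap tS} = e_\emptyset = 0$ by (L4), (L3), and the definition of an accurate foundation set. Hence $\sum_{s \in F_a} e_{sS}$ is a projection, and $1 - \sum_{s \in F_a} e_{sS} = \prod_{s \in F_a}(1 - e_{sS})$, again using pairwise orthogonality (expand the product: all cross terms vanish). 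Now apply the defining relation \eqref{eq:Q condition} for the boundary quotient to the foundation set $F_a$ — an accurate foundation set is in particular a foundation set — to conclude $\prod_{s \in F_a}(1 - \bar{e}_{sS}) = 0$ in $\CQ(S)$, i.e. $\sum_{s \in F_a} \bar{e}_{sS} = 1$.

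For the second claim, assume $S$ has property (AR). Let $\CQ_a$ denote the universal $C^*$-algebra generated by isometries $w_s$ and projections $f_X$, $X \in \CJ(S)$, subject to (L1)--(L4) and \eqref{eq:Q_a condition}; I must show $\CQ(S) \cong \CQ_a$ via the obvious map on generators. First, by the first claim, the images $\bar v_s$, $\bar e_X$ in $\CQ(S)$ satisfy (L1)--(L4) and \eqref{eq:Q_a condition}, so by universality of $\CQ_a$ there is a surjective $*$-homomorphism $\CQ_a \to \CQ(S)$ sending $w_s \mapsto \bar v_s$, $f_X \mapsto \bar e_X$. For the reverse direction, it suffices to check that the generators $w_s, f_X$ of $\CQ_a$ satisfy the defining relation \eqref{eq:Q condition} of $\CQ(S)$, i.e. that $\prod_{s \in F}(1 - f_{sS}) = 0$ for \emph{every} foundation set $F$ (not just accurate ones); universality of $C^*(S)$ then gives a $*$-homomorphism $C^*(S) \to \CQ_a$ which factors through $\CQ(S)$, and the two maps are mutually inverse on generators. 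This is where property (AR) enters: given a foundation set $F$, choose an accurate foundation set $F_a \subset FS$, so for each $f_a \in F_a$ there is some $s(f_a) \in F$ and $x_{f_a} \in S$ with $f_a = s(f_a) x_{f_a}$, whence $f_a S \subseteq s(f_a) S$ and therefore $f_{f_aS} \leq f_{s(f_a)S}$ as projections (using $f_{f_aS} = f_{f_aS} f_{s(f_a)S}$, which follows from (L4) and $f_aS \cap s(f_a)S = f_aS$). Consequently $1 - f_{s(f_a)S} \leq 1 - f_{f_aS}$ for each $f_a$, and since the factors $1 - f_{sS}$ for $s \in F$ appearing in the product $\prod_{s \in F}(1 - f_{sS})$ include, up to the surjection $F_a \to F$, a factor dominating each $1 - f_{f_aS}$, we get $\prod_{s \in F}(1 - f_{sS}) \leq \prod_{f_a \in F_a}(1 - f_{f_aS})$; but by \eqref{eq:Q_a condition} applied to $F_a$ together with the pairwise orthogonality argument from the first paragraph, $\prod_{f_a \in F_a}(1 - f_{f_aS}) = 1 - \sum_{f_a \in F_a} f_{f_aS} = 0$. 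Hence $\prod_{s \in F}(1 - f_{sS}) = 0$.

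One technical point deserves care: the factors $1 - f_{sS}$ for $s \in F$ are \emph{commuting} projections (by (L4) their product is $f_{\bigcap sS}$-type, and commutativity follows since $f_{sS}f_{tS} = f_{sS \cap tS} = f_{tS}f_{sS}$), so the elementary operator inequality "$a \leq b$, $0 \leq c = c^*$, $c$ commutes with $a,b$ $\Rightarrow$ $ac \leq bc$" can be applied repeatedly to conclude $\prod_{s \in F}(1 - f_{sS}) \leq \prod_{f_a \in F_a}(1 - f_{f_aS})$; I should spell out that all projections involved lie in the commutative $C^*$-subalgebra generated by $\{f_{sS} : s \in S\}$, where everything reduces to pointwise inequalities of functions and the argument is transparent. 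The main obstacle is precisely this monotonicity bookkeeping — matching the surjection $F_a \twoheadrightarrow F$ correctly so that the product over $F$ is genuinely dominated by the product over $F_a$, and making sure a single $s \in F$ being hit by several $f_a$'s causes no trouble (it does not, since $1 - f_{sS} \leq 1 - f_{f_aS}$ for each such $f_a$, so one copy of $1 - f_{sS}$ already dominates the corresponding $1 - f_{f_aS}$, and extra factors in the product over $F_a$ only shrink it further). The equivalence stated at the end of the proposition — the "in other words" clause — is then immediate from the isomorphism $\CQ(S) \cong \CQ_a$ just established.
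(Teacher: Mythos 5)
Your proof is correct and follows essentially the same route as the paper: disjointness plus (L4) turns the product over an accurate foundation set into $1-\sum_{s\in F_a}e_{sS}$, and property (AR) together with $1-e_{tS}\leq 1-e_{f_aS}$ for $f_a\in tS$ dominates $\prod_{t\in F}(1-e_{tS})$ by $\prod_{f_a\in F_a}(1-e_{f_aS})$, so (Q) and \eqref{eq:Q_a condition} impose the same relations (the paper phrases this as an equivalence of relations rather than via two universal-property maps, but the content is identical). One cosmetic remark: the refinement map $F_a\to F$, $f_a\mapsto s(f_a)$, need not be surjective, but as your own bookkeeping shows this is irrelevant, since the product over $F$ lies below each $1-e_{f_aS}$ and hence below their product.
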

\begin{proof}
Let $F_a$ be an accurate foundation set for $S$, that is 
\begin{enumerate}[1)]
\item $F_a$ is a foundation set and
\item $sS \cap tS = \emptyset$ for all distinct $s,t \in F_a$.
\end{enumerate}
1) implies $\prod_{s \in F_a} (1-\bar{e}_{sS}) = 0$, see Definition~\ref{def:boundary quotient}. Now (L4) and 2) yield $\prod_{s \in F_a} (1-\bar{e}_{sS}) = 1 - \sum_{s \in F_a} \bar{e}_{sS}$, so we get \eqref{eq:Q_a condition}.

Now let $F \in \FF(S)$. We need to show that \eqref{eq:Q condition} holds for $F$ by using the structure of $C^*(S)$ and \eqref{eq:Q_a condition}. If $S$ has property (AR), then there is $F_a \in \FF_a(S)$ which refines $F$, that is, for each $s \in F_a$, there exists $t \in F$ with $s \in tS$. On the level of $C^*(S)$, this implies $1 - e_{tS} \leq 1-e_{sS}$. Since all these projections commute, we get 
\[\begin{array}{l} 0 \leq \prod\limits_{t \in F}(1-e_{tS}) \leq \prod\limits_{s \in F_a} (1-e_{sS}) = 1 - \sum\limits_{s \in F_a}e_{sS}. \end{array}\] 
But the right hand side vanishes once we impose relation \eqref{eq:Q_a condition} and hence \eqref{eq:Q condition} holds for $F$. This shows that \eqref{eq:Q condition} and \eqref{eq:Q_a condition} are equivalent relations provided that $S$ has property (AR).
\end{proof}


\noindent Similar presentations of $\CQ(S)$ for right LCM semigroups with property (AR) have been obtained in special cases, see for instance \cite{LR3}*{Corollary 6.2} or \cite{BRRW}*{Subsection 6.4}. We will now show, that these examples and many more right LCM semigroups have property (AR). 

Recall that reverse inclusion of principal right ideal defines a partial order on $S$, i.e. $s \leq t$ if $t \in sS$ for $s,t \in S$. If $s \leq t$ or $s \geq t$ holds, then $s,t \in S$ are said to be \emph{comparable}. 

\begin{remark}\label{rem:incomp elts -> disj p r ideal -> right LCM}
Every left cancellative semigroup with the property that incomparable elements have disjoint principal right ideals is right LCM.
\end{remark}

\begin{proposition}\label{prop:AR for some right LCM sgps}
Suppose that 
\begin{enumerate}
\item[\textnormal{(1)}] $S$ is directed with respect to $\leq$, or
\item[\textnormal{(2)}] incomparable elements have disjoint principal right ideals.
\end{enumerate}
Then every $F \in \FF(S)$ has an accurate refinement $F_a \in \FF_a(S)$ satisfying $F_a \subset F$. In particular, $S$ has property \textnormal{(AR)}.
\end{proposition}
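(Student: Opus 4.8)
The plan is to handle the two cases separately and in each case produce, from an arbitrary foundation set $F$, an accurate refinement that is actually a \emph{subset} of $F$ (which is stronger than property (AR) and in particular implies it).

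\textbf{Case (1): $S$ directed.} Here Remark~\ref{rem:AFS for directed right LCM} already tells us that $\FF_a(S)$ consists of all finite subsets of $S$. So any singleton $\{f\}$ with $f \in F$ is an accurate foundation set contained in $F$; since $F$ is nonempty (being a foundation set, it must contain some $t$ with $tS \cap 1\cdot S = tS \neq \emptyset$), such an $f$ exists. Actually one should double-check: in a directed semigroup every element lies in the core $S_0$, so any single-element subset of $S$ is an accurate foundation set, and we are done immediately.

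\textbf{Case (2): incomparable elements have disjoint principal right ideals.} First I would note this hypothesis forces $S$ to be right LCM (Remark~\ref{rem:incomp elts -> disj p r ideal -> right LCM}), consistent with our standing assumption. Now fix $F \in \FF(S)$. The idea is to pass to the subset of minimal elements of $F$ with respect to $\leq$. Concretely, let $F_a$ be the set of elements $f \in F$ that are minimal in $(F,\leq)$, i.e.\ there is no $f' \in F$ with $f' < f$. Two things must be checked. (i) $F_a$ is still a foundation set: given $s \in S$, since $F$ is a foundation set there is $t \in F$ with $sS \cap tS \neq \emptyset$; choosing such a $t$ that is $\leq$-minimal among all elements of $F$ meeting $s$ — or simply choosing any $\leq$-minimal element of $F$ below $t$ — one gets $f \in F_a$ with $f \leq t$, hence $tS \subseteq fS$, hence $sS \cap fS \supseteq sS \cap tS \neq \emptyset$. (Finiteness of $F$ guarantees minimal elements exist below every element.) (ii) $F_a$ is accurate: take distinct $f_1, f_2 \in F_a$. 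By minimality neither is below the other, so they are not comparable, hence by hypothesis (2) their principal right ideals are disjoint. Finally $F_a \subseteq F$ is immediate, and refining means $f_a \in f_a S$ trivially, so $F_a$ refines $F$ in the sense of Definition~\ref{def:AR property}. Property (AR) follows since $F$ was arbitrary.

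\textbf{Main obstacle.} The only place that requires genuine care is step (i) in Case (2): verifying that passing to minimal elements does not destroy the foundation-set property. The subtlety is that when we replace $t$ by a minimal $f \le t$, we must be sure such an $f$ exists \emph{inside} $F$ — this uses finiteness of $F$ (a finite poset has minimal elements below each point) — and that $f \le t$ really gives $fS \supseteq tS$, which is just the definition $t \in fS \Rightarrow tS \subseteq fS$. Everything else is bookkeeping. One might also phrase the argument more symmetrically — for each $s$, pick any $f\in F$ with $sS\cap fS\ne\emptyset$ and $f$ minimal with this property — but the "minimal elements of $F$" formulation is cleaner and avoids depending on $s$.
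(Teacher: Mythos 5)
Your Case (1) is exactly the paper's argument: any singleton taken from $F$ is an accurate foundation set when $S$ is directed. In Case (2) the underlying idea (prune comparable redundancies from $F$) is also the paper's, but your implementation via ``the minimal elements of $(F,\leq)$'' has a genuine gap: $\leq$ is only a preorder on $S$. Antisymmetry fails precisely when two distinct elements generate the same principal right ideal (equivalently, differ by a right unit), and hypothesis (2) does not exclude such pairs, since they are comparable. Concretely, let $S=\IF_2^+\times\Z/2\Z$ with free generators $a,b$; this is left cancellative and incomparable elements have disjoint principal right ideals, so it lies in the scope of (2). Take the foundation set $F=\{(a,0),(a,1),(b,0)\}$. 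Since $(a,0)$ and $(a,1)$ differ by the unit $(\emptyset,1)$, each lies in the other's principal right ideal, so under your reading $f'<f\Leftrightarrow f'\leq f,\ f'\neq f$ neither $(a,0)$ nor $(a,1)$ is minimal; your $F_a$ is $\{(b,0)\}$, which is not a foundation set because $(a,0)S\cap(b,0)S=\emptyset$. The parenthetical ``a finite poset has minimal elements below each point'' is exactly where this breaks: $(F,\leq)$ need not be a poset. (Reading ``minimal'' with respect to ideal inclusion instead does not help: then $F_a=F$, which is not accurate, as $(a,0)S=(a,1)S$.) This is not a fringe case — the semigroups $X^*\bowtie G$ covered by condition (2) via Corollary~\ref{cor:AR for some right LCM sgps} have unit group $G$.

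The paper's proof avoids this by deleting redundant elements one at a time: if $p\neq q$ and $pS\cap qS\neq\emptyset$, then by (2) $p\in qS$ or $q\in pS$, and one removes $p$ in the first case and $q$ otherwise; when $pS=qS$ exactly one member of the pair is removed, so the foundation property is preserved at every step and the terminal set is accurate. Your argument is repaired along the same lines with one extra step: first keep a single representative for each ideal in $\{fS\mid f\in F\}$, then among these keep those whose ideals are maximal under inclusion. Every $t\in F$ then satisfies $tS\subseteq fS$ for some surviving $f$ (finitely many ideals), so the result is a foundation set contained in $F$, and distinct survivors are incomparable, hence have disjoint ideals by (2).
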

\begin{proof}
Suppose first that $S$ is directed and let $F \in \FF(S)$. Then $F \neq \emptyset$ and every $p \in F$ yields an accurate refinement $F_a:= \{p\} \in \FF_a(S)$ for $F$. Now let $S$ satisfy (2) and $F \in \FF(S)$. If there are $p,q \in F$ with $p \neq q$ and $pS \cap qS \neq \emptyset$, (2) implies that $p \in qP$ or $q \in pP$. If $p \in qP$, then $F' := F \setminus \{p\} \in \FF(S)$, and otherwise we get $F':= F \setminus \{q\} \in \FF(S)$. Hence we can remove redundant elements from $F$ until there are only those left that correspond to mutually disjoint right ideals and the output is an accurate refinement of $F$.
\end{proof}

\noindent The class of right LCM semigroups to which Proposition~\ref{prop:AR for some right LCM sgps} applies is large and we list a number special cases to demonstrate this.

\begin{corollary}\label{cor:AR for some right LCM sgps}
If $S$ is
\begin{enumerate}
\item[\textnormal{(3)}] a group,
\item[\textnormal{(4)}] abelian,
\item[\textnormal{(5)}] isomorphic to $\IF_n^+$ for some $1 \leq n < \infty$, or 
\item[\textnormal{(6)}] given by $X^*\bowtie G$ for a self-similar action $(G,X)$,
\end{enumerate}
then either \textnormal{(1)} or \textnormal{(2)} holds. In particular, $S$ has property \textnormal{(AR)}.
\end{corollary}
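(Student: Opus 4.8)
The plan is to verify, for each of the four classes (3)--(6), that the semigroup satisfies either hypothesis (1) (directedness) or hypothesis (2) (incomparable elements have disjoint principal right ideals) of Proposition~\ref{prop:AR for some right LCM sgps}, after which the conclusion about property (AR) is immediate. I would organise the proof case by case.

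\textbf{Groups and abelian semigroups.} If $S$ is a group, then $sS = S = tS$ for all $s,t$, so any two elements are comparable and $S$ is trivially directed; hence (1) holds. If $S$ is abelian, then for any $s,t \in S$ the element $st$ lies in both $sS$ and $tS$, so $sS \cap tS \neq \emptyset$ always; thus $S$ is directed and (1) holds again. (Strictly, directedness as phrased requires a common \emph{lower} bound in the $\leq$ order, i.e. a common element in $sS\cap tS$, which is exactly what these arguments produce.)

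\textbf{Free semigroups $\IF_n^+$.} Here I would argue that (2) holds: two words $u,v \in \IF_n^+$ are comparable in $\leq$ precisely when one is a prefix of the other (since $u \leq v$ means $v \in u\IF_n^+$, i.e. $v = uw$ for some word $w$, allowing the empty word). If neither is a prefix of the other, then $u\IF_n^+ \cap v\IF_n^+ = \emptyset$, because any common element would be a word having both $u$ and $v$ as prefixes, forcing one of $u,v$ to be a prefix of the other — a contradiction. So incomparable elements have disjoint principal right ideals and (2) holds.

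\textbf{Zappa--Sz\'ep products $X^* \bowtie G$.} For a self-similar action $(G,X)$, the product in $X^* \bowtie G$ is governed by the restriction and cocycle maps; an element has the form $(u,g)$ with $u \in X^*$, $g \in G$, and $(u,g)(v,h) = (u\cdot(g{\cdot}v), (g|_v)h)$ where $g\cdot v$ is again a word of the same length as $v$ and $g|_v \in G$. The key observation is that the group $G$ acts on $X^*$ by length-preserving bijections, so the ``prefix'' relation on the $X^*$-coordinate is respected. I would show that $(u,g) \leq (v,h)$ implies $u$ is a prefix of $v$, and conversely that if $u$ is \emph{not} a prefix of $v$ and $v$ is not a prefix of $u$, then $(u,g)(X^*\bowtie G) \cap (v,h)(X^*\bowtie G) = \emptyset$: any common element would have first coordinate simultaneously of the form $u\cdot(g{\cdot}w)$ and $v\cdot(h{\cdot}w')$, and since $g\cdot w, h\cdot w'$ have the same lengths as $w,w'$, a common such word forces a prefix relation between $u$ and $v$. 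When $u$ \emph{is} a prefix of $v$ (or vice versa), one shows the elements are comparable, by exhibiting the required right multiplier — this uses that the $G$-action on words of a given length is a bijection, so one can solve $g\cdot w = (\text{the tail of } v \text{ after } u)$ for $w$. Hence again (2) holds.

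The main obstacle I anticipate is the Zappa--Sz\'ep case: one must be careful that "comparable'' in the $\leq$-order really does correspond to a prefix relation on the word-coordinate \emph{and} that the group-coordinate does not obstruct the existence of the multiplier witnessing comparability. Concretely, given $(u,g)$ and $(uw', h)$ one needs to produce $(v, k)$ with $(u,g)(v,k) = (uw',h)$, which amounts to solving $g\cdot v = w'$ (possible and unique since $g$ acts bijectively on $X^{|w'|}$) and then $(g|_v)k = h$ (solvable in the group $G$); this is routine once set up correctly but is the one place where the self-similar structure is genuinely used. Everything else is a short unwinding of definitions, and the final sentence "$S$ has property (AR)'' follows directly by invoking Proposition~\ref{prop:AR for some right LCM sgps}.
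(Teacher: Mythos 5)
Your proposal is correct, and for cases (3) and (4) it coincides with the paper's argument (both classes are directed, so Proposition~\ref{prop:AR for some right LCM sgps}(1) applies). Where you diverge is in cases (5) and (6): the paper simply observes that $\IF_n^+$ is the special case of $X^*\bowtie G$ with trivial group, and then invokes \cite{BRRW}*{Theorem 3.8} to conclude that $X^*\bowtie G$ satisfies condition (2), whereas you reprove the relevant ideal structure from scratch. Your computation is the right one: since each $g\in G$ acts bijectively on $X^{|v|}$ and the group coordinate can always be adjusted, one gets $(u,g)\bigl(X^*\bowtie G\bigr) = uX^*\times G$, so comparability of $(u,g)$ and $(v,h)$ is exactly prefix-comparability of $u$ and $v$, and incomparable elements have disjoint principal right ideals; your separate prefix argument for $\IF_n^+$ is the $G$ trivial instance of this. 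What your route buys is a self-contained, elementary verification that does not lean on the cited theorem (essentially reproving it in this setting); what the paper's route buys is brevity and the reduction (5) $\subset$ (6). One small slip: you call the common element of $sS\cap tS$ a common \emph{lower} bound, but in the paper's convention $s\leq t\Leftrightarrow t\in sS$ it is a common upper bound; since your parenthetical correctly identifies the needed condition as nonemptiness of $sS\cap tS$, this is purely terminological and does not affect the argument.
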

\begin{proof}
(3) and (4) both imply (1), so $S$ has property (AR) by Proposition~\ref{prop:AR for some right LCM sgps}. (5) is a special case of (6). Due to \cite{BRRW}*{Theorem 3.8}, (6) forces (2) and hence Proposition~\ref{prop:AR for some right LCM sgps} shows property (AR).
\end{proof}

\begin{remark}\label{rem:AR for inf gen free sgp}
$S=\IF_\infty^+$ also has property (AR), but for trivial reasons since any foundation set $F$ for $S$ has to contain the identity of $S$. For completeness, we note that $\FF_a(S) = \bigl\{\{1\}\bigr\}$. 
\end{remark}

\begin{remark}\label{rem:Lawson's left Rees monoids}
In \cite{Law}, Lawson considered so-called \emph{left Rees monoids}, which are left cancellative semigroups with identity that satisfy condition (2) from Proposition~\ref{prop:AR for some right LCM sgps} and the ascending chain condition for principal right ideals. The last condition means that every principal right ideal is properly contained in only a finite number of principal right ideals. By Proposition~\ref{prop:AR for some right LCM sgps}, all left Rees monoids have property (AR). 

According to \cite{Law}*{Theorem 3.7}, attributed to Perrot \cite{Per}, left Rees monoids can be characterised as Zappa-Sz\'{e}p products of free monoids by groups. Moreover, new examples of left Rees monoids can be constructed out of known ones, see \cite{Law}*{Section 4} for details. 

Finally, let us mention that \cite{Law}*{Examples 2.8} provides a number of interesting examples of left Rees monoids. \cite{Law}*{Examples 2.8 (iv)} might be particularly interesting because a left Rees monoid is constructed from an arbitrary left cancellative semigroup using Rhodes expansions.
\end{remark}

\noindent From what we have gathered so far, it seems feasible to explore property (AR) for other kinds of Zappa-Sz\'{e}p products $U \bowtie A$. Indeed, \cite{BRRW}*{Lemma 3.3} provides a sufficient criterion for $U \bowtie A$ to be a right LCM semigroup. More importantly, \cite{BRRW}*{Remark 3.4} explains that, given the requirements of \cite{BRRW}*{Lemma 3.3}, the structure of $\CJ(U \bowtie A)$ is governed by $\CJ(U)$, i.e.,
\begin{equation}\label{eq:J(U bowtie A) vs J(U)}
(u,a)U\bowtie A \cap (v,b)U\bowtie A \neq \emptyset \Longleftrightarrow uU \cap vU \neq \emptyset
\end{equation} 
for all $(u,a),(v,b) \in U\bowtie A$. The proof of \cite{BRRW}*{Lemma 3.3} actually shows that if $w$ is a right LCM for $u$ and $v$ in $U$, then there is $c \in A$ such that
\begin{equation}\label{eq:right LCM for U bowtie A by right LCM for U}
(u,a)U\bowtie A \cap (v,b)U\bowtie A = (w,c)U\bowtie A.
\end{equation}

\begin{proposition}\label{prop:AR for U bowtie A iff AR for U}
Suppose $S = U \bowtie A$ is such that $U$ is right LCM, $\CJ(A)$ is totally ordered by inclusion and $U \to U, u \mapsto a \cdot u$ is bijective for every $a \in A$. Then $S$ is a right LCM semigroup with identity and $S$ has property (AR) if and only if $U$ has property (AR). 
\end{proposition}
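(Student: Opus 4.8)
The plan is to translate foundation sets and accurate foundation sets of $S = U\bowtie A$ into the corresponding objects for $U$ by means of the identification of constructible right ideals, and then transport property (AR) between the two semigroups. First I would note that under the stated hypotheses the requirements for \cite{BRRW}*{Lemma 3.3} and \cite{BRRW}*{Remark 3.4} are met, so that \eqref{eq:J(U bowtie A) vs J(U)} and \eqref{eq:right LCM for U bowtie A by right LCM for U} hold. Since $U$ is right LCM, it follows at once that every intersection $(u,a)S \cap (v,b)S$ is empty (when $uU \cap vU = \emptyset$) or principal (by \eqref{eq:right LCM for U bowtie A by right LCM for U}), so $S$ is a right LCM semigroup with identity $(1_U,1_A)$; this is the first assertion.

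Next I would set up the dictionary. Write $\pi\colon S \to U$ for the first-coordinate map $(u,a) \mapsto u$; it is onto since $A$ has an identity. By \eqref{eq:J(U bowtie A) vs J(U)}, $(u,a)S$ meets $(v,b)S$ precisely when $uU$ meets $vU$, and the latter condition does not involve $a,b$; hence a finite $F \subseteq S$ lies in $\FF(S)$ if and only if $\pi(F) \in \FF(U)$, and $F \in \FF_a(S)$ if and only if $\pi$ is injective on $F$ and $\pi(F) \in \FF_a(U)$ --- the injectivity being automatic, since two elements of an accurate $F$ sharing a first coordinate $v$ would have principal right ideals both meeting $v$. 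I also need one lifting observation: for $(v,b) \in S$ and $w \in vU$ there is $c \in A$ with $(w,c) \in (v,b)S$. Indeed, the first coordinate of a product $(v,b)(x,d)$ equals $v\,(b \cdot x)$, which sweeps out all of $vU$ as $x$ ranges over $U$ because $u \mapsto b\cdot u$ is surjective; one takes $c$ to be the second coordinate of $(v,b)(x,1_A)$ for an $x$ with $v\,(b\cdot x) = w$.

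With these in hand the equivalence is short. If $U$ has (AR): given $F = \{(v_i,b_i)\}_i \in \FF(S)$, the set $\pi(F) \in \FF(U)$ admits an accurate refinement $\{w_1,\dots,w_m\} \in \FF_a(U)$ with each $w_k \in v_{i(k)}U$; applying the lifting observation, pick $c_k$ with $(w_k,c_k) \in (v_{i(k)},b_{i(k)})S$ and set $F_a := \{(w_1,c_1),\dots,(w_m,c_m)\}$. The $w_k$ are distinct and $\{w_k\} \in \FF_a(U)$, so $F_a \in \FF_a(S)$ by the dictionary, and $F_a \subseteq FS$ by construction; hence $S$ has (AR). Conversely, if $S$ has (AR): given $G \in \FF(U)$, the set $F := G \times \{1_A\}$ lies in $\FF(S)$, so it has an accurate refinement $F_a \in \FF_a(S)$ with $F_a \subseteq FS$. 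Then $\pi(F_a) \in \FF_a(U)$ by the dictionary, and $F_a \subseteq FS$ together with the trivial action of $1_A$ on $U$ forces each element of $\pi(F_a)$ to lie in $vU$ for some $v \in G$; thus $\pi(F_a)$ is an accurate refinement of $G$, and $U$ has (AR).

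The main point of care is bookkeeping: ``foundation set for $U$'' refers only to the underlying set $\pi(F)$ of first coordinates, so repetitions there are harmless, whereas ``accurate'' additionally pins down cardinality through the injectivity of $\pi|_F$; and the lifting observation has to be checked against the precise Zappa-Sz\'{e}p multiplication and restriction conventions and uses surjectivity (not bijectivity) of $u \mapsto b\cdot u$. No input beyond \eqref{eq:J(U bowtie A) vs J(U)}--\eqref{eq:right LCM for U bowtie A by right LCM for U} and the stated bijectivity hypothesis enters.
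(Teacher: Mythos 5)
Your proof is correct and follows essentially the same route as the paper: transfer foundation sets and accurate foundation sets between $S$ and $U$ via \eqref{eq:J(U bowtie A) vs J(U)}, refine in $U$, and lift the refinement back to $S$. The only (harmless) deviation is that you justify the lifting step by a direct computation with the Zappa-Sz\'{e}p multiplication and surjectivity of $u \mapsto b\cdot u$, where the paper simply invokes \eqref{eq:right LCM for U bowtie A by right LCM for U}.
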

\begin{proof}
$S$ is right LCM by \cite{BRRW}*{Lemma 3.3}. For each $E \subset U$ and every family $(a_u)_{u \in E}$ we let $F(E,(a_u)) := \{ (u,a_u) \mid u \in E\}$. Similarly, given $F \subset S$, we set $E(F) := \{ u \mid (u,a) \in F\}$. By \eqref{eq:J(U bowtie A) vs J(U)}, we have $E \in \FF(U)$ if and only if $F(E,(a_u)) \in \FF(S)$, and moreover, $E$ is accurate if and only if $F(E,(a_u))$ is accurate (for every family $(a_u)_{u \in E}$). Likewise, \eqref{eq:J(U bowtie A) vs J(U)} implies that $F \in \FF(S)$ holds if and only if $E(F) \in \FF(U)$. In addition, accuracy of $F$ is equivalent to $E(F)$ being accurate.

Now suppose $U$ has property (AR). Starting with $F \in \FF(S)$, we can refine $E(F) \in \FF(U)$ to some accurate foundation set $E(F)_a$. Take $u \in E(F)_a$. Since $E(F)_a$ is an accurate refinement for $E(F)$, there is $(v,b) \in F$ such that $u \in vU$. By \eqref{eq:right LCM for U bowtie A by right LCM for U}, there is $a_u \in A$ satisfying $(u,a_u) \in (v,b)S$. It follows that $F(E(F)_a,(a_u))$ is an accurate refinement of $F$ because $E(F)_a$ is accurate.

Conversely, assume that $S$ has property (AR). If $E \in \FF(U)$, then we know that $F(E,(a_u)) \in \FF(S)$ (for every family $(a_u)_{u \in E}$) and we can refine this foundation set by some $F_a \in \FF_a(S)$. By construction, $E(F_a)$ is an accurate refinement of $E$.
\end{proof}

\begin{examples}\label{ex:bowtie with AR}
We have already seen that the Zappa-Sz\'{e}p product $X^*\bowtie G$ associated to a self-similar action $(G,X)$ has property (AR). In fact, we can use Proposition~\ref{prop:AR for U bowtie A iff AR for U} to see that all of the examples of right LCM Zappa-Sz\'{e}p products in \cite{BRRW}*{Section~3} have property (AR).
\begin{enumerate}[(a)]
\item For $m$ and $n$ positive integers the positive cone $\text{BS}(m,n)^+$ of the Baumslag-Solitar group $\text{BS}(m,n)=\langle a,b|ab^m=b^na\rangle$ is a Zappa-Sz\'{e}p product of the form $\IF_n^+\bowtie \N$. Since we know from Corollary~\ref{cor:AR for some right LCM sgps} that $\IF_n^+$ has property (AR), Proposition~\ref{prop:AR for U bowtie A iff AR for U} says that each $\text{BS}(m,n)^+$ has property (AR).

\item The semigroups $\N\rtimes\N^\times$ and $\Z\rtimes\Z^\times$ can be described as Zappa-Sz\'{e}p products $U\bowtie A$ with $U=\{(r,x):x\ge 1,\, 0\le r<x\}$. To see that $U$ has property (AR), suppose $F=\{(r_1,x_1),\dots, (r_n,x_n)\}$ is a foundation set. Then for $x$ the least common multiple of $x_1,\dots,x_n$ the set $F_a=\{(0,x),\dots,(x-1,x)\}$ is an accurate foundation set. Morover, because of the structure of the principal right ideals and since $F$ is a foundation set, for each $(r,x)\in F_a$ the ideal $(r,x)U$ must be contained in one of $(r_i,x_i)U$. So $F_a$ is an accurate refinement of $F$, and hence $U$ has property (AR). Proposition~\ref{prop:AR for U bowtie A iff AR for U} now says that $\N\rtimes\N^\times$ and $\Z\rtimes\Z^\times$ both have property (AR).

\item If $G$ acts self-similarly on two alphabets $X$ and $Y$, and there is a bijection $\theta:Y\times X\to X\times Y$ such that the conditions given in \cite{BRRW}*{Proposition~3.10} hold, then there is a natural Zappa-Sz\'{e}p product $\IF_\theta^+\bowtie G$, where the semigroup $\IF_\theta^+$ is a $2$-graph with a single vertex. In general $\IF_\theta^+$ is not right LCM, but, for instance, it is right LCM when the sizes of $X$ and $Y$ are coprime, and $G=\Z$ acts as an odometer on both $X$ and $Y$. In this case, and if $X$ has size $m$ and $Y$ 
has size $n$, then $\IF_\theta^+$ is isomorphic to the subsemigroup of $U$ from (ii) generated by $(0,m),\dots,(m-1,m),(0,n),\dots,(n-1,n)$. The arguments above in (b) apply, and hence $\IF_\theta^+$ has property (AR). 
Proposition~\ref{prop:AR for U bowtie A iff AR for U} now says that the product of two (coprime) odometer actions $\IF_\theta^+\bowtie\Z$ has property (AR).   
\end{enumerate}
\end{examples}

\noindent Example~\ref{ex:bowtie with AR}~(b) can also be viewed as an elementary example of a semigroup built from an algebraic dynamical system $(\gpt)$ as $S=\gxp$. The natural question whether property (AR) passes from $P$ to $S$ under suitable conditions requires some preparation and will be examined in the first part of the next section.

\section{Foundation sets for algebraic dynamical systems}\label{sec:ADS}

Recall from \cite{BLS2}*{Definition 2.1} that an \emph{algebraic dynamical system} 
$(\gpt)$ is an action $\theta$ of a right LCM semigroup with identity $P$ by 
injective endomorphisms of a group $G$, subject to the condition that $pP \cap 
qP = rP$ implies $\theta_p(g) \cap \theta_q(G) = \theta_r(G)$ for all $p,q,r 
\in P$. 
In this section we aim to establish property (AR) for a large class of right LCM semigroups $S$ built from algebraic dynamical systems $(\gpt)$.

From now on let $(\gpt)$ denote an algebraic dynamical system. In addition, let $\pfin$ denote the subsemigroup of $P$ consisting of those $p \in P$ for which $G/\theta_p(G)$ is finite. For convenience, we shall usually denote $\gxp$ by $S$ within this section. 

Let us remind ourselves of the structure of $\CJ(S)$ as described in \cite{BLS2}*{Proposition 4.2} since this will be essential.

\begin{lemma}\label{lem:ideal intersection for gxp}
For all $(g,p),(h,q) \in S$, we have
\[(g,p)S \cap (h,q)S = \begin{cases}
(g\theta_p(k),r)S &\text{if there are } r \in P \text{ and } k \in G \text{ with}\\ 
&pP \cap qP = rP 
\text{ and } g\theta_p(k) \in h\theta_q(G), \\
\emptyset &\text{otherwise.}
\end{cases}\]
\end{lemma}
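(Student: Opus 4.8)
The plan is to compute the intersection $(g,p)S \cap (h,q)S$ directly from the multiplication rule in $\gxp$ and reduce it to the defining compatibility condition of the algebraic dynamical system. Recall that in $S = \gxp$ the product is $(g,p)(k,s) = (g\theta_p(k), ps)$, so the principal right ideal is $(g,p)S = \{(g\theta_p(k), ps) \mid k \in G,\ s \in P\}$. Thus an element $(x,t)$ lies in $(g,p)S$ if and only if $t \in pP$ and $g^{-1}x \in \theta_p(G)$, and similarly for $(h,q)S$. So an element of the intersection is an $(x,t)$ with $t \in pP \cap qP$ and $g^{-1}x \in \theta_p(G)$, $h^{-1}x \in \theta_q(G)$.

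First I would treat the case $pP \cap qP = \emptyset$: then no such $t$ exists and the intersection is empty, consistent with the ``otherwise'' clause. Next, assume $pP \cap qP = rP$ for some $r \in P$ (this uses that $P$ is right LCM). Writing $r = ps_1 = qs_2$, any $t$ in the intersection's first coordinate-constraint lies in $rP$, say $t = rt'$. The condition $g^{-1}x \in \theta_p(G)$ together with $h^{-1}x \in \theta_q(G)$ says precisely $x \in g\theta_p(G) \cap h\theta_q(G)$. The key point is to show this coset intersection is nonempty exactly when there is $k \in G$ with $g\theta_p(k) \in h\theta_q(G)$, and that in that case $g\theta_p(G) \cap h\theta_q(G) = g\theta_p(k)\bigl(\theta_p(G) \cap \theta_q(G)\bigr)$; here is where the algebraic dynamical system axiom enters, giving $\theta_p(G) \cap \theta_q(G) = \theta_r(G)$. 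So the intersection of the two right ideals, if nonempty, consists of all $(x,t)$ with $t \in rP$ and $x \in g\theta_p(k)\theta_r(G)$, which is exactly $(g\theta_p(k), r)S$ by the description of principal right ideals above.

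Finally I would check that the displayed answer is well-defined, i.e.\ independent of the choices of $r$ (unique up to right multiplication by $P^*$) and of $k$ (any two valid choices differ by an element of $\theta_p^{-1}(\theta_r(G))$, which gets absorbed), so that the right ideal $(g\theta_p(k),r)S$ does not depend on them. I expect the main obstacle to be the bookkeeping in the coset-intersection step: translating ``$g\theta_p(G) \cap h\theta_q(G) \neq \emptyset$'' into the stated condition and correctly invoking the axiom $\theta_p(G) \cap \theta_q(G) = \theta_r(G)$ to pin down the intersection as a single coset of $\theta_r(G)$; everything else is a routine unwinding of the semidirect-product multiplication. Since the result is quoted from \cite{BLS2}*{Proposition 4.2}, one could alternatively just cite it, but the argument above makes the excerpt self-contained.
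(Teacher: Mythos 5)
Your proposal is correct: the paper does not prove this lemma at all but simply quotes it from \cite{BLS2}*{Proposition 4.2}, and your direct computation — unwinding the semidirect product multiplication so that $(x,t)\in(g,p)S$ iff $t\in pP$ and $x\in g\theta_p(G)$, reducing the coset intersection $g\theta_p(G)\cap h\theta_q(G)$ to a single coset of $\theta_p(G)\cap\theta_q(G)$, and invoking the defining axiom $\theta_p(G)\cap\theta_q(G)=\theta_r(G)$ — is exactly the standard argument behind that cited result. The well-definedness check at the end is harmless but not needed, since the left-hand side is choice-free and you establish the equality for one admissible pair $(r,k)$.
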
 

\noindent Recall that $p \geq q$ is the same as $p \in qP$.

\begin{lemma}\label{lem:P_F construction for fin subset of gxp}
Given a finite subset $F \subset S$, there exists a finite set $P_F \subset P$ with the following properties:
\begin{enumerate}
\item[\textnormal{(i)}] Whenever $p \in P$ and $(h,q) \in F$ satisfy $pP \cap qP \neq \emptyset$, there is $q' \in P_F$ such that $pP \cap q'P \neq \emptyset$. 
\item[\textnormal{(ii)}] For each $q \in P_F$ there exists $p \in P$ such that $pP \cap qP \neq \emptyset$ and $pP \cap q'P = \emptyset$ for all $q' \in P_F$ with $q' \not\leq q$.
\item[\textnormal{(iii)}] For each $q \in P_F$ there exists $(h',q') \in F$ such that $q \geq q'$.
\end{enumerate} 
\end{lemma}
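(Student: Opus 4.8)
The plan is to build $P_F$ in two stages: first take a "projection" of $F$ to $P$, then prune it down to an antichain-like set satisfying the separation condition (ii). Writing $F = \{(h_1,q_1),\dots,(h_n,q_n)\}$, start with the naive candidate $Q_0 := \{q_1,\dots,q_n\} \subset P$. This set already satisfies a version of (i): if $pP \cap qP \neq \emptyset$ for some $(h,q) \in F$, then $q = q_i$ for some $i$, so $q_i \in Q_0$ works. The issue is that $Q_0$ need not satisfy (ii), because some $q_i$ may fail to have a "private" principal right ideal not met by the ideals of the smaller elements $q_j$.

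The key step is to repair (ii) by the following pruning procedure. Call $q \in Q_0$ \emph{redundant} if there is no $p \in P$ with $pP \cap qP \neq \emptyset$ but $pP \cap q'P = \emptyset$ for all $q' \in Q_0$ with $q' \not\leq q$; in other words, $q$ is redundant if every $p$ meeting $qP$ also meets $q'P$ for some $q' \not\leq q$ in the current set. I would remove such a $q$ from $Q_0$ and repeat. The crucial observation is that removing a redundant $q$ does not destroy property (i): if $p \in P$ met $qP$, then by redundancy $p$ meets some $q'P$ with $q' \not\leq q$, and $q'$ is still in the set (or was itself already removed, in which case one iterates — this terminates since $Q_0$ is finite and the order $\leq$ on $P$ restricted to $Q_0$ has no infinite descending chains among finitely many elements, so one can always pass down to a non-redundant element below). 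Since $Q_0$ is finite, this process stops, yielding $P_F \subset Q_0$ in which every element is non-redundant, i.e.\ satisfies (ii). Property (iii) is then automatic: $P_F \subset Q_0 = \{q_1,\dots,q_n\}$, so each $q \in P_F$ equals some $q'$ with $(h',q') \in F$, and $q \geq q'$ holds trivially (with equality).

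The step I expect to be the main obstacle is verifying that the pruning genuinely preserves property (i) — one must be careful that when $q$ is declared redundant and removed, the witnessing $q'$ (which satisfies $q' \not\leq q$ and $q'P \cap pP \neq \emptyset$) either survives in $P_F$ or is itself eventually "covered" by a surviving element; this requires a small induction on the number of removals, using transitivity of $\leq$ and the right LCM property (via Lemma~\ref{lem:ideal intersection for gxp}, which tells us $pP \cap qP \neq \emptyset$ for $S = \gxp$ is governed entirely by the $P$-component). A clean way to organize this is to process elements of $Q_0$ in an order refining $\leq$ (minimal elements first) and argue that at each stage the remaining set still satisfies (i); alternatively, one defines $P_F$ directly as the set of $q \in Q_0$ for which a private $p$ exists relative to $Q_0$ itself, and then checks (i) and (ii) for this set in one go. I would adopt the latter, more direct formulation to avoid the bookkeeping of an iterative removal, and spend the bulk of the proof verifying that this explicitly-defined $P_F$ meets (i): given $p$ with $pP \cap q_iP \neq \emptyset$, if $q_i \in P_F$ we are done, and otherwise the failure of the "private $p$" condition for $q_i$ produces $q' \not\leq q_i$ with $pP \cap q'P \neq \emptyset$, and one descends until landing in $P_F$.
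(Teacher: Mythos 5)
Your fallback iterative pruning is essentially sound, but the formulation you say you would actually adopt --- defining $P_F$ directly as the set of $q \in Q_0$ admitting a ``private'' $p$ relative to $Q_0$ itself --- does not work, and the final ``descend until landing in $P_F$'' step is exactly where it breaks. Take $P$ directed, e.g.\ $P = \IN^2$ and $F = \{(h_1,(1,0)),(h_2,(0,1))\}$: in a directed semigroup any two principal right ideals intersect, so neither $(1,0)$ nor $(0,1)$ admits a $p$ with $pP$ disjoint from the ideal of the other (incomparable) element; your directly defined $P_F$ is then empty and (i) fails outright. More generally, the descent along witnesses $q' \not\leq q$ has no termination guarantee: such witnesses need not be comparable to $q$, so ``descending'' is a misnomer and the chain can wander among elements outside $P_F$ indefinitely. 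The same slip affects your justification of the iterative version (``no infinite descending chains'', ``pass down to a non-redundant element below''). The iterative removal itself can be repaired, but by different bookkeeping: when $q$ is removed, its redundancy witness $q'$ lies in the \emph{current} set, hence is removed strictly later if at all; iterating, the removal stages of successive witnesses strictly increase, so the chain must terminate at a surviving element, which gives (i), while (ii) is the termination condition of the pruning and (iii) is trivial since $P_F \subset Q_0$. (Also note a singleton is never redundant, so $P_F \neq \emptyset$ whenever $F \neq \emptyset$.)

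Even with that repair, your construction diverges from the paper's in a way that matters later. The paper does not prune the bare projection $\{q \mid (h,q) \in F\}$; it first passes to a set $F_1$ of representatives of all non-empty intersections $\bigcap_{(h,q) \in F'} qP$ with $F' \subset F$ (i.e.\ all right LCMs of elements appearing in $F$), and then discards dispensable elements, processing minimal ones first and testing indispensability against the entire remaining set. The resulting $P_F$ enjoys a privacy property stronger than (ii): the private $p$ for $q$ also misses every right LCM lying above $q$. This is precisely what the proof of Lemma~\ref{lem:char FS for gxp by P_F} uses in the step deducing $q \in q'P$, and in the directed case the paper's $P_F$ is the single right LCM $p_F$ with $p_FP = \bigcap_{(h,q)\in F} qP$ --- a set your construction never produces. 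So a corrected version of your argument does prove the lemma as literally stated, but the object it constructs is too weak for the application the lemma is designed for.
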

\begin{proof}
Let $F_1 \subset P$ be a complete set of representatives for 
\[\begin{array}{l}\bigl\{ \bigcap\limits_{(h,q) \in F'} qP \mid F' \subset F\bigr\} \setminus \{\emptyset\} \subset \CJ(P).\end{array}\]
Pick $q_1 \in F_1$ which is minimal in the sense that $q_1 \geq q$ implies $q_1P=qP$ for all $q \in F_1$. Let $F_1':= \{q \in F_1 \mid qP = q_1P \}$ and $E_0:= \emptyset$. If there is $p \in P$ such that $pP \cap q_1P \neq \emptyset$ whereas $pP \cap qP = \emptyset$ for all $q \in (F_1 \setminus F_1') \cup \{ q' \in E_0 \mid q' \not\leq q_1\}$, then we say that $q_1$ is indispensable and set $E_1:= E_0 \cup \{q_1\}$. If $q_1$ is dispensable, we choose $E_1 = E_0$ and note that by construction of $F_1$ we have: Whenever $q_1P \cap pP \neq \emptyset$ for some $p \in P$, there exists $q \in F_1 \setminus F_1'$ with $q \geq q_1$ such that $qP \cap pP \neq \emptyset$. 

Next, we define $F_2 := F_1 \setminus F_1'$ and repeat the procedure for some $q_2 \in F_2$ which is minimal in the sense that $q_2 \geq q$ implies $q_2P=qP$ for all $q \in F_2$.  Let $F_2':= \{q \in F_2 \mid qP = q_2P \}$. If there is $p \in P$ such that $pP \cap q_2P \neq \emptyset$ whereas $pP \cap qP = \emptyset$ for all $(h,q) \in (F_2 \setminus F_2') \cup \{ q' \in E_1 \mid q' \not\leq q_2\}$, then we set $E_2:= E_1 \cup \{q_2\}$. Otherwise we take $E_2:= E_1$. Finally, setting $F_3 := F_2 \setminus F_2'$ allows us to iterate this procedure. After finitely many steps, we get a finite set $E_n=:P_F$ which satisfies (i)--(iii) because it is a minimal subset of indispensable elements of $F_1$.
\end{proof}

\noindent It is clear from the construction that $P_F$ is non-empty if and only if $F$ is. If $P$ is directed, it is easy to see that $P_F$ consists of a single element $p_F$ with $\bigcap_{(h,q) \in F} qP = p_FP$.

\begin{lemma}\label{lem:char FS for gxp by P_F}
A finite subset $F$ of $S$ is a foundation set for $S$ if and only if there exists a foundation set $P_F$ for $P$ such that
\begin{equation}\label{eq:P_F cover of G}
\bigcup_{\substack{(h',q') \in F:\\ q' \leq q}} h'\theta_{q'}(G) = G \text{ holds for all } q \in P_F.
\end{equation}
\end{lemma}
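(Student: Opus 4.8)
The plan is to prove the two implications directly from the description of ideal intersections in Lemma~\ref{lem:ideal intersection for gxp}, using for $P_F$ in the forward direction precisely the set produced by Lemma~\ref{lem:P_F construction for fin subset of gxp}.

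The implication $(\Leftarrow)$ is the routine one. Given a foundation set $P_F$ for $P$ satisfying \eqref{eq:P_F cover of G} and an arbitrary $(g,p)\in S$, first pick $q\in P_F$ with $pP\cap qP\neq\emptyset$, then use \eqref{eq:P_F cover of G} for this $q$ to find $(h',q')\in F$ with $q'\leq q$ and $g\in h'\theta_{q'}(G)$. Since $q'\leq q$ forces $qP\subseteq q'P$, we get $\emptyset\neq pP\cap qP\subseteq pP\cap q'P$, while $g=g\theta_p(1_G)\in h'\theta_{q'}(G)$; by Lemma~\ref{lem:ideal intersection for gxp} these two facts together say $(g,p)S\cap(h',q')S\neq\emptyset$. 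As $(g,p)$ was arbitrary, $F\in\FF(S)$.

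For $(\Rightarrow)$, assume $F\in\FF(S)$ and let $P_F$ be as in Lemma~\ref{lem:P_F construction for fin subset of gxp}. That $P_F$ is a foundation set for $P$ follows by applying the foundation property of $F$ to $(1_G,p)$ for each $p\in P$, which produces $(h,q)\in F$ with $pP\cap qP\neq\emptyset$, and then invoking Lemma~\ref{lem:P_F construction for fin subset of gxp}~(i). To verify \eqref{eq:P_F cover of G}, fix $q\in P_F$ and $g\in G$. Using Lemma~\ref{lem:P_F construction for fin subset of gxp}~(ii), choose $p\in P$ with $pP\cap qP\neq\emptyset$ and $pP\cap q''P=\emptyset$ for all $q''\in P_F$ with $q''\not\leq q$; replacing $p$ by a generator of the principal ideal $pP\cap qP$ we may also assume $q\leq p$ (this only shrinks $pP$, so the disjointness is retained), so that $\theta_p(G)\subseteq\theta_q(G)$. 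Applying the foundation property of $F$ to $(g,p)$ gives $(h_1,q_1)\in F$ with $(g,p)S\cap(h_1,q_1)S\neq\emptyset$, i.e.\ $pP\cap q_1P\neq\emptyset$ and $g\theta_p(k)\in h_1\theta_{q_1}(G)$ for some $k\in G$. The key point is that $q_1\leq q$: since $pP$ meets the ideal $q_1P$, which is one of the ideals handled in the proof of Lemma~\ref{lem:P_F construction for fin subset of gxp}, that construction in fact returns some $q''\in P_F$ with $q''P\subseteq q_1P$ and $pP\cap q''P\neq\emptyset$ — dispensable members of the auxiliary set $F_1$ are absorbed by larger members of $F_1$ — and then the disjointness property of $p$ forces $q''\leq q$, whence $qP\subseteq q''P\subseteq q_1P$, i.e.\ $q_1\leq q$. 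Combining $q\leq p$ with $q_1\leq q$ yields $\theta_p(G)\subseteq\theta_q(G)\subseteq\theta_{q_1}(G)$, so $\theta_p(k)\in\theta_{q_1}(G)$; as $\theta_{q_1}(G)$ is a subgroup, $g\theta_p(k)\in h_1\theta_{q_1}(G)$ gives $g\in h_1\theta_{q_1}(G)$ with $q_1\leq q$, which is what \eqref{eq:P_F cover of G} demands.

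The hard part is precisely the step $q_1\leq q$. The verbatim statement of Lemma~\ref{lem:P_F construction for fin subset of gxp}~(i) only guarantees that $pP$ meets \emph{some} element of $P_F$, which does not pin down $q_1$; what is really needed is the sharper statement — clear from the construction, since dispensable elements of $F_1$ are subsumed by larger ones — that whenever $pP$ meets a constructible ideal arising from $F$ one may take the resulting element of $P_F$ to sit inside that ideal. I would either record this refinement alongside Lemma~\ref{lem:P_F construction for fin subset of gxp} or insert its brief justification here; everything else is bookkeeping with Lemma~\ref{lem:ideal intersection for gxp} and with the observation that $p\leq p'$ implies $\theta_{p'}(G)\subseteq\theta_p(G)$.
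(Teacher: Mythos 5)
Your proof is correct and takes essentially the same route as the paper: both directions rest on Lemma~\ref{lem:ideal intersection for gxp} together with the set $P_F$ from Lemma~\ref{lem:P_F construction for fin subset of gxp}, with the same normalisation $p\in qP$ and the same absorption of $\theta_p(k)$ into $\theta_{q_1}(G)$, the only cosmetic difference being that the paper first reduces \eqref{eq:P_F cover of G} to minimal elements of $P_F$ while you treat an arbitrary $q\in P_F$ directly via property (ii). The sharpened form of property (i) that you flag (that the element of $P_F$ meeting $pP$ can be chosen inside the given ideal $q_1P$) is precisely what the paper also uses implicitly when it asserts ``this forces $q\in q'P$'', justified by the absorption of dispensable elements in the construction of $P_F$, so your remark identifies the right mechanism rather than a genuine deviation.
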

\begin{proof}
Suppose $F$ is a foundation set and $P_F \subset P$ is obtained via Lemma~\ref{lem:P_F construction for fin subset of gxp}. So for every $(g,p) \in S$, there exists $(h,q) \in F$ such that $(g,p)S \cap (h,q)S \neq \emptyset$. According to Lemma~\ref{lem:ideal intersection for gxp}, this implies $pP \cap qP \neq \emptyset$. By condition (i) for $P_F$ from Lemma~\ref{lem:P_F construction for fin subset of gxp}, there is $q' \in P_F$ satisfying $pP \cap q'P \neq \emptyset$, so we get $P_F \in \FF(P)$. Concerning \eqref{eq:P_F cover of G}, we note that it suffices to prove this for all minimal elements of $P_F$. But if $q \in P_F$ is minimal among the elements of $P_F$, then (ii) implies that there exists $p \in P$ such that $pP \cap qP \neq \emptyset$ whereas $pP \cap q'P = \emptyset$ for all $q' \in P_F, q' \neq q$. Without loss of generality, we can assume that $p$ belongs to $qP$ since we may replace it with $p' \in qP$ satisfying $pP \cap qP = p'P$. Let $g \in G$. Since $F$ is a foundation set for $S$ there is $(h',q') \in F$ such that $(g,p)S \cap (h',q')S \neq \emptyset$. In particular, we get $(g,q)S \cap (h',q')S \neq \emptyset$. We remark that $(g,p)S \cap (h'',q'')S = \emptyset$ for all $h'' \in G$ and $q'' \in P_F\setminus\{q\}$. This forces $q \in q'P$, so Lemma~\ref{lem:ideal intersection for gxp} implies $g \in h'\theta_{q'}(G)\theta_q(G) = h'\theta_{q'}(G)$. Since $g$ was arbitrary, we get \eqref{eq:P_F cover of G} for every minimal $q \in P_F$ and hence for all $q \in P_F$. 

The converse direction is straightforward. For each $(g,p) \in S$, there is $q \in P_F$ such that $pP \cap qP \neq \emptyset$. This means $(g,p)S \cap (g,q)S \neq \emptyset$, see Lemma~\ref{lem:ideal intersection for gxp}. By \eqref{eq:P_F cover of G}, there exists $(h',q') \in F$ satisfying $(h',q') \leq (g,q)$. In particular, this implies $(g,p)S \cap (h',q')S \supset (g,p)S \cap (g,q)S \neq \emptyset$, so $F$ is a foundation set for $S$.
\end{proof}

\noindent Note that if $F \subset S$ and $P_F \subset P$ satisfy \eqref{eq:P_F cover of G}, then we have $P_F \subset \bigcup_{(h,q) \in F} qP$.

For the next step, we will need a celebrated lemma of B.H.~Neumann on finiteness properties for covers of groups, see \cite{Neu}*{Lemma 4.1}.

\begin{lemma}\label{lem:Neumann fin index lemma}
Let $G$ be a group and $G_1,\dots,G_n$  subgroups of $G$. If there are $g_1, \dots,g_n \in G$ such that $G= \bigcup_{1 \leq i \leq n} g_iG_i$, then there is $1 \leq i \leq n$ such that the index $[G:G_i] < \infty$ and $G= \bigcup_{\substack{1 \leq i \leq n:\\ [G:G_i] < \infty}} g_iG_i$.
\end{lemma}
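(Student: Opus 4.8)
The plan is to prove the lemma in two stages. First I would establish the weaker assertion that \emph{some} $G_i$ has finite index in $G$, by induction on the number $d$ of distinct subgroups occurring among $G_1,\dots,G_n$. Then I would bootstrap this to the refined covering statement $G=\bigcup_{[G:G_i]<\infty} g_iG_i$ by a separate argument using the intersection of the finite-index subgroups. The reason for splitting it this way is that the induction, while it delivers ``some subgroup has finite index'', replaces the original cosets by translated ones at each step, so it does not by itself keep track of which original cosets survive.

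For the first stage, the base case $d=1$ is easy: all $G_i$ equal a single subgroup $H$, and since the left cosets of $H$ partition $G$, a finite union of them can equal $G$ only when $[G:H]<\infty$. For the inductive step I would assume $d\ge 2$ and, for contradiction, that \emph{every} $G_i$ has infinite index. Pick one subgroup, say $G_n$, set $J=\{i:G_i=G_n\}$, and note $J$ is a proper subset of $\{1,\dots,n\}$. The cosets $\{g_jG_n:j\in J\}$ are finitely many cosets of the infinite-index subgroup $G_n$, so I can choose a left coset $hG_n$ disjoint from all of them; then $hG_n\subseteq G=\bigcup_i g_iG_i$ forces $hG_n\subseteq\bigcup_{i\notin J}g_iG_i$, and translating by $g_jh^{-1}$ gives $g_jG_n\subseteq\bigcup_{i\notin J}(g_jh^{-1}g_i)G_i$ for each $j\in J$. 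Substituting these into the original cover exhibits $G$ as a finite union of cosets of the subgroups $\{G_i:i\notin J\}$, which involve at most $d-1$ distinct subgroups, still all of infinite index; this contradicts the inductive hypothesis. Hence some $G_i$ has finite index.

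For the second stage I would write $I_{\mathrm{fin}}=\{i:[G:G_i]<\infty\}$ (non-empty by stage one) and $I_\infty=\{i:[G:G_i]=\infty\}$, and suppose $U:=\bigcup_{i\in I_{\mathrm{fin}}}g_iG_i\ne G$. Let $H:=\bigcap_{i\in I_{\mathrm{fin}}}G_i$, which has finite index as a finite intersection of finite-index subgroups. Each $g_iG_i$ with $i\in I_{\mathrm{fin}}$ is a union of left cosets of $H$, hence so is $U$ and so is its non-empty complement, so I can pick a left coset $xH\subseteq G\setminus U$; then $xH\subseteq\bigcup_{i\in I_\infty}g_iG_i$. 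Writing $G$ as the disjoint union of its finitely many left cosets $y_1H,\dots,y_mH$ and using $y_kH=(y_kx^{-1})(xH)$, I get $G\subseteq\bigcup_k\bigcup_{i\in I_\infty}(y_kx^{-1}g_i)G_i$, a finite cover of $G$ by cosets of subgroups all of infinite index, contradicting stage one. Therefore $\bigcup_{i\in I_{\mathrm{fin}}}g_iG_i=G$, which is the claim.

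The main obstacle is the inductive step of the first stage: one must exploit the infinitude of the coset space $G/G_n$ to trade all cosets of $G_n$ for boundedly many cosets of the remaining subgroups, and thereby strictly decrease the count of distinct subgroups while keeping the cover finite. The bookkeeping there—choosing the avoiding coset $hG_n$ and propagating it to every $g_jG_n$ with $j\in J$—is the delicate point; everything else is routine coset algebra.
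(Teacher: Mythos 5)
Your proof is correct. The paper itself gives no proof of this lemma --- it is quoted verbatim from B.~H.~Neumann's paper (\cite{Neu}, Lemma 4.1) --- and your two-stage argument (induction on the number of distinct subgroups to get one of finite index, then the Poincar\'{e} intersection $H=\bigcap_{i\in I_{\mathrm{fin}}}G_i$ to show the finite-index cosets already cover) is essentially Neumann's original and the standard textbook proof; both the coset-translation step in the induction and the bootstrap are carried out correctly.
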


\begin{proposition}\label{prop:FS for gxp - pfin suffices}
Let $F$ be a finite subset of $S$. Then $F$ is a foundation set for $S$ if and only if $F \cap G \rtimes_\theta \pfin$ is a foundation set for $S$.
\end{proposition}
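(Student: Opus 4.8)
The plan is to use the characterisation of foundation sets for $S = \gxp$ from Lemma~\ref{lem:char FS for gxp by P_F} together with B.~H.~Neumann's lemma (Lemma~\ref{lem:Neumann fin index lemma}) to replace, for each $q \in P_F$, the cover of $G$ by cosets $h'\theta_{q'}(G)$ with a subcover in which only the finite-index cosets appear. The direction that $F \cap (G \rtimes_\theta \pfin)$ being a foundation set implies $F$ is a foundation set is trivial, since enlarging a subset preserves the property of hitting every principal right ideal. So the content is in the forward direction.

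First I would take $F \in \FF(S)$ and invoke Lemma~\ref{lem:char FS for gxp by P_F} to obtain a foundation set $P_F \in \FF(P)$ satisfying \eqref{eq:P_F cover of G}, namely $\bigcup_{(h',q') \in F,\, q' \leq q} h'\theta_{q'}(G) = G$ for every $q \in P_F$. Fix $q \in P_F$. The sets $\theta_{q'}(G)$ for $q' \leq q$ are subgroups of $G$ (as $\theta_{q'}$ is an injective endomorphism), so the displayed equality is a finite cover of $G$ by cosets of subgroups. By Neumann's lemma there is at least one $q'$ with $q' \leq q$ and $[G : \theta_{q'}(G)] < \infty$ — i.e.\ $q' \in \pfin$ — appearing in the cover, and moreover $G = \bigcup h'\theta_{q'}(G)$ where the union is over those $(h',q') \in F$ with $q' \leq q$ and $q' \in \pfin$. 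I would then set $F' := F \cap (G \rtimes_\theta \pfin)$ and check that $P_{F'} := P_F$ still satisfies \eqref{eq:P_F cover of G} with $F$ replaced by $F'$: indeed, for each $q \in P_F$ the subcover just produced involves only pairs $(h',q')$ with $q' \in \pfin$, hence only pairs lying in $F'$, so $\bigcup_{(h',q') \in F',\, q' \leq q} h'\theta_{q'}(G) = G$. Applying the converse direction of Lemma~\ref{lem:char FS for gxp by P_F} (with the pair $F'$, $P_{F'}$) then yields $F' \in \FF(S)$.

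One small technical point to address carefully: Lemma~\ref{lem:char FS for gxp by P_F} as stated produces, for a given foundation set $F$, \emph{some} $P_F$ via Lemma~\ref{lem:P_F construction for fin subset of gxp}, and the converse direction requires only that there \emph{exists} a foundation set $P_{F'}$ for $P$ with \eqref{eq:P_F cover of G} holding for $F'$. Since $P_F$ itself still works for $F'$ — it is already a foundation set for $P$ and, as shown, the covering condition persists after restricting to $\pfin$ — the converse applies directly; there is no need to rerun the construction of Lemma~\ref{lem:P_F construction for fin subset of gxp} for $F'$.

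The main obstacle is conceptual rather than computational: one must be sure that Neumann's lemma is being applied to a genuine cover by cosets of \emph{subgroups}, and that ``finite index'' there translates precisely to membership in $\pfin$ (which it does, by the definition $\pfin = \{p \in P : [G : \theta_p(G)] < \infty\}$). A secondary subtlety is that in \eqref{eq:P_F cover of G} the index $q'$ ranges over pairs $(h', q') \in F$, so distinct cosets in the cover may share the same subgroup $\theta_{q'}(G)$; this causes no difficulty, as Neumann's lemma allows repeated subgroups, and the retained cosets are still indexed by a subset of $F$ lying in $G \rtimes_\theta \pfin$. Once these points are in place the proof is short.
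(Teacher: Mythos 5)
Your proposal is correct and follows essentially the same route as the paper: obtain $P_F$ from Lemma~\ref{lem:char FS for gxp by P_F}, apply Neumann's lemma (Lemma~\ref{lem:Neumann fin index lemma}) for each $q \in P_F$ to discard the infinite-index cosets so that the same $P_F$ witnesses \eqref{eq:P_F cover of G} for $F \cap (G \rtimes_\theta \pfin)$, and then invoke the converse direction of the characterisation, the reverse implication being obvious. Your extra remarks (reusing $P_F$ rather than reconstructing it, repeated subgroups in the cover, finite index meaning membership in $\pfin$) are accurate and simply make explicit what the paper leaves implicit.
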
 
\begin{proof}
If $F$ is a foundation set, then Lemma~\ref{lem:char FS for gxp by P_F} states that there exists $P_F\in \FF(P)$ satisfying \eqref{eq:P_F cover of G} for $F$.
Now if we let $F^{\text{(fin)}}:= F \cap G \rtimes_\theta \pfin$, then Lemma~\ref{lem:Neumann fin index lemma} shows that 
$P_F$ also satisfies \eqref{eq:P_F cover of G} for $F^{\text{(fin)}}$. Hence $F^{\text{(fin)}}$ is a foundation set for $S$ by Lemma~\ref{lem:char FS for gxp by P_F}. The reverse implication is obvious.
\end{proof}

\begin{corollary}\label{cor:no AFS with inf part}
If $F$ is an accurate foundation set, then $F \subset G \rtimes_\theta \pfin$.
\end{corollary}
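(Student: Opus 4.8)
The plan is to read this off directly from Proposition~\ref{prop:FS for gxp - pfin suffices} once we exploit the disjointness built into Definition~\ref{def:AFS}. First I would put $F^{(\mathrm{fin})} := F \cap G \rtimes_\theta \pfin$. Since an accurate foundation set is in particular a foundation set, Proposition~\ref{prop:FS for gxp - pfin suffices} applies and shows that $F^{(\mathrm{fin})}$ is itself a foundation set for $S$.

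Next I would argue by contradiction. Suppose $F^{(\mathrm{fin})} \subsetneq F$ and pick $(g,p) \in F \setminus F^{(\mathrm{fin})}$. Because $F^{(\mathrm{fin})}$ is a foundation set, applying its defining property to the element $(g,p) \in S$ produces some $(h,q) \in F^{(\mathrm{fin})}$ with $(g,p)S \cap (h,q)S \neq \emptyset$. Now $(g,p) \neq (h,q)$: the former does not lie in $F^{(\mathrm{fin})}$ whereas the latter does, and both lie in $F$. This contradicts accuracy of $F$, which demands $sS \cap tS = \emptyset$ for all distinct $s,t \in F$. Hence $F = F^{(\mathrm{fin})} \subset G \rtimes_\theta \pfin$, as claimed.

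I do not expect any genuine obstacle here; the argument is a one-line consequence of the previous proposition. The only point to keep straight is that one must invoke the foundation set property of $F^{(\mathrm{fin})}$ at the element $(g,p)$ itself, so that its non-membership in $F^{(\mathrm{fin})}$ immediately yields the strict inequality $(g,p) \neq (h,q)$ that triggers the contradiction with accuracy.
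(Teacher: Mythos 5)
Your argument is correct and coincides with the paper's own proof: both apply Proposition~\ref{prop:FS for gxp - pfin suffices} to see that $F^{(\mathrm{fin})}$ is a foundation set, then use its foundation property at a hypothetical element of $F$ outside $G \rtimes_\theta \pfin$ to contradict accuracy of $F$.
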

\begin{proof}
Let $F \in \FF_a(S)$. By Proposition~\ref{prop:FS for gxp - pfin suffices} we know that $F^{\text{(fin)}} := F\cap G \rtimes_\theta \pfin$ is also a foundation set for $S$. So if there was $(g,p) \in F$ with $p \in \pinf$, then there would be $(h,q) \in F^{\text{(fin)}}$ satisfying $(g,p)S \cap (h,q)S \neq \emptyset$. But then $F$ would not be accurate and hence we conclude $F=F^{\text{(fin)}}$.
\end{proof}

\begin{definition}\label{def:elementary AFS for ADS}
If 
\[F = \{ (g_1^{(1)},p_1),\dots,(g_1^{(n_1)},p_1),(g_2^{(1)},p_2),\dots,(g_m^{(n_m)},p_m)\} \subset S\] 
is such that 
\begin{enumerate}[(1)]
\item $\{p_1,\dots,p_m\}$ is contained in $\pfin$ and an element of $\FF_a(P)$, and
\item $(g_\ell^{(k)})_{1 \leq k \leq n_\ell}$ is a transversal for $G/\theta_{p_\ell}(G)$ for each $1 \leq \ell \leq m$,
\end{enumerate}
then $F$ is called an \emph{elementary foundation set}. The collection of all elementary foundation sets is denoted by $\FF_e(\gpt)$.
\end{definition}

\noindent Every elementary foundation set is an accurate foundation set. 

\begin{example}\label{ex:elementary AFS for ADS}
Let us consider $\Z \rtimes |2\rangle \subset \Z \rtimes \Z^\times$ built from the irreversible algebraic dynamical system $(\Z,|2\rangle,\cdot)$. The set $\{(0,2),(1,2)\}$ forms an elementary foundation set whereas $\{(0,2),(1,4),/3,4)\}$ is an accurate foundation set, which is non-elementary.
\end{example}

\begin{proposition}\label{prop:refining FS to AFS for ADS}
Suppose that for every $F \in \FF(P)$ with $F \subset \pfin$ there exists an accurate refinement $F_a \in \FF_a(P)$ with $F_a \subset \pfin$. Then every foundation set for $S$ can be refined accurately by an elementary foundation set for $S$. In particular, $S$ has property \textnormal{(AR)}.
\end{proposition}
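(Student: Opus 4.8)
The plan is to take an arbitrary foundation set $F \in \FF(S)$ and produce an elementary foundation set refining it, working through the machinery assembled in the lemmas above. First I would invoke Proposition~\ref{prop:FS for gxp - pfin suffices} to replace $F$ by $F^{\text{(fin)}} := F \cap G \rtimes_\theta \pfin$, which is still a foundation set for $S$; an elementary refinement of $F^{\text{(fin)}}$ will automatically be one of $F$ since $F^{\text{(fin)}} \subset FS$. By Lemma~\ref{lem:char FS for gxp by P_F}, there is a set $P_{F^{\text{(fin)}}} \in \FF(P)$ obtained from Lemma~\ref{lem:P_F construction for fin subset of gxp} satisfying \eqref{eq:P_F cover of G}; moreover, by condition (iii) of Lemma~\ref{lem:P_F construction for fin subset of gxp} together with the fact that $F^{\text{(fin)}} \subset G \rtimes_\theta \pfin$, every element of $P_{F^{\text{(fin)}}}$ dominates some $q'$ with $(h',q') \in F^{\text{(fin)}}$, so $q' \in \pfin$, and since $q' \leq q$ gives $[G:\theta_q(G)] \leq [G:\theta_{q'}(G)]\cdot[\theta_{q'}(G):\theta_q(G)]$ — here I need to observe that finiteness of the relevant indices propagates upward along $\leq$ — we get $P_{F^{\text{(fin)}}} \subset \pfin$.

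Next I would apply the hypothesis: since $P_{F^{\text{(fin)}}} \in \FF(P)$ and $P_{F^{\text{(fin)}}} \subset \pfin$, there is an accurate refinement $P_a \in \FF_a(P)$ with $P_a \subset \pfin$. Write $P_a = \{p_1,\dots,p_m\}$; these have pairwise disjoint principal right ideals and $\{p_1,\dots,p_m\} \in \FF_a(P)$, which is precisely condition~(1) in Definition~\ref{def:elementary AFS for ADS}. For each $\ell$, since $P_a$ refines $P_{F^{\text{(fin)}}}$, there is $q \in P_{F^{\text{(fin)}}}$ with $p_\ell \in qP$, and \eqref{eq:P_F cover of G} for that $q$ reads $\bigcup_{(h',q')\in F^{\text{(fin)}},\, q' \leq q} h'\theta_{q'}(G) = G$. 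Since $q' \leq q \leq p_\ell$ in the reverse-inclusion order, i.e. $p_\ell \in q'P$, Lemma~\ref{lem:ideal intersection for gxp} tells us $h'\theta_{q'}(G) \supseteq h'\theta_{p_\ell}(G)$-cosets refine into cosets of $\theta_{p_\ell}(G)$. Then for each $\ell$ I would pick a transversal $(g_\ell^{(k)})_{1\le k\le n_\ell}$ for $G/\theta_{p_\ell}(G)$ (finite because $p_\ell \in \pfin$), which gives condition~(2). The set $F_e := \{(g_\ell^{(k)}, p_\ell) : 1 \le \ell \le m,\, 1 \le k \le n_\ell\}$ is then an elementary foundation set.

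It remains to verify that $F_e$ genuinely refines $F^{\text{(fin)}}$, i.e. $F_e \subset F^{\text{(fin)}} S$. Fix $(g_\ell^{(k)}, p_\ell) \in F_e$. As above there is $q \in P_{F^{\text{(fin)}}}$ with $p_\ell \geq q$, and \eqref{eq:P_F cover of G} produces $(h',q') \in F^{\text{(fin)}}$ with $q' \leq q \leq p_\ell$ (so $p_\ell \in q'P$) and $g_\ell^{(k)} \in h'\theta_{q'}(G)$. By Lemma~\ref{lem:ideal intersection for gxp}, the conditions $p_\ell \in q'P$ and $g_\ell^{(k)} \in h'\theta_{q'}(G)$ are exactly what is needed to conclude $(g_\ell^{(k)}, p_\ell) \in (h', q')S$, i.e. $(g_\ell^{(k)},p_\ell)$ lies in the principal right ideal of an element of $F^{\text{(fin)}} \subset F$. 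Hence $F_e$ is an accurate (indeed elementary) refinement of $F$, proving property~(AR).

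The main obstacle I anticipate is the bookkeeping in the second and third paragraphs: one must carefully match the partial-order conventions ($q' \leq q$ versus $p_\ell \in q'P$) across Lemma~\ref{lem:ideal intersection for gxp}, Lemma~\ref{lem:char FS for gxp by P_F}, and Lemma~\ref{lem:P_F construction for fin subset of gxp}, and check that for a given $p_\ell \in P_a$ the covering relation \eqref{eq:P_F cover of G} at the dominated element $q$ really does transfer to a covering of $G$ by cosets $h'\theta_{q'}(G)$ that are unions of cosets of $\theta_{p_\ell}(G)$, so that a transversal of $G/\theta_{p_\ell}(G)$ can be chosen compatibly — each chosen coset representative landing inside one of the $h'\theta_{q'}(G)$. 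The key enabling fact, which should be isolated as a small observation, is that $p \geq q$ with $q \in \pfin$ forces $p \in \pfin$, guaranteeing all the indices in play are finite.
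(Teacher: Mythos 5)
Your route follows the paper's (reduce to $F^{(\text{fin})} = F \cap G\rtimes_\theta \pfin$ via Proposition~\ref{prop:FS for gxp - pfin suffices}, refine a foundation set of $P$ inside $\pfin$ using the hypothesis, then take transversals), and your extra care in anchoring the covering condition \eqref{eq:P_F cover of G} at elements of $P_{F^{(\text{fin})}}$ and transporting it upward to the refined elements $p_\ell$ is exactly what makes the final refinement check verifiable. However, the step you yourself single out as ``the key enabling fact'' is false: for an algebraic dynamical system, $p \geq q$ with $q \in \pfin$ does \emph{not} force $p \in \pfin$. Writing $p = qr$, injectivity of $\theta_q$ gives $[G:\theta_p(G)] = [G:\theta_q(G)]\cdot[\theta_q(G):\theta_q(\theta_r(G))] = [G:\theta_q(G)]\cdot[G:\theta_r(G)]$, so finiteness of the index is \emph{not} inherited upward along $\leq$ unless the cofactor $r$ also lies in $\pfin$. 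For instance, let $P=\IN^2$ act on $G = \IZ \times \bigoplus_{\IN}\IZ$ with one generator acting as multiplication by $2$ on the first factor and the other as the shift on the second; the first generator lies in $\pfin$, but its product with the shift generator does not. Since you need $P_{F^{(\text{fin})}} \subset \pfin$ in order to be allowed to invoke the hypothesis at all, your argument as written has a genuine gap at this point.

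The conclusion you need is nevertheless true, for a different reason: by the construction in Lemma~\ref{lem:P_F construction for fin subset of gxp}, every $q \in P_{F^{(\text{fin})}}$ represents an intersection $qP = \bigcap_{(h,q') \in F''} q'P$ for some subset $F'' \subset F^{(\text{fin})}$, and iterating the defining condition of an algebraic dynamical system gives $\theta_q(G) = \bigcap_{(h,q') \in F''} \theta_{q'}(G)$; a finite intersection of finite-index subgroups has finite index, so $q \in \pfin$. With that substitution your proof goes through, and the compatibility worry in your last paragraph is vacuous: since \eqref{eq:P_F cover of G} covers all of $G$ by cosets $h'\theta_{q'}(G)$ with $q' \leq q \leq p_\ell$, each of which is a union of $\theta_{p_\ell}(G)$-cosets, \emph{any} transversal of $G/\theta_{p_\ell}(G)$ works. (For comparison, the paper's own proof is terser: it applies the hypothesis directly to the bare projection $\{p \mid (g,p) \in F'\}$ and does not route through $P_{F'}$; your version, once the $\pfin$ claim is repaired, is the more carefully justified one, since refining only the projection does not by itself guarantee the coset covering at the refined elements.)
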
 
\begin{proof}
Let $F' \in \FF(S)$. Using Proposition~\ref{prop:FS for gxp - pfin suffices}, we may assume $F' \subset G \rtimes_\theta \pfin$. In particular, $F:= \{p \in P \mid (g,p) \in F \text{ for some } g \in G\} \subset \pfin$ forms a foundation set for $P$. By our assumption, there is $F_a \in \FF_a(P)$ with $F_a \subset \pfin$ which refines $F$. Next, pick a transversal $T_p$ for $G/\theta_p(G)$ for every $p \in F_a$. Then $F'_e := \{ (g,p) \mid p \in F_a, g \in T_p \}$ yields an elementary foundation set that refines $F'$. Since elementary foundation set are accurate, $S$ has property (AR).
\end{proof}

\begin{remark}\label{rem:towards a converse for FS to AFS for ADS}
The converse of the first statement in Proposition~\ref{prop:refining FS to AFS for ADS} might be true in some cases, but there is a subtlety we would like to point out: Suppose $S$ has property (AR) and let $F \in \FF(P)$ with $F \subset \pfin$. Choose a transversal $T_p$ for $G/\theta_p(G)$ for every $p \in F$. As $F \subset \pfin$, the set $F':= \{(g,p) \mid p \in F, g \in T_p\}$ is a foundation set for $S$. Thus there exists $F'_a \in \FF_a(S)$ which refines $F'$. By Proposition~\ref{prop:FS for gxp - pfin suffices}, we know that we can assume $F'_a \subset G \rtimes_\theta \pfin$. It follows that $F_a := \{ p \in P \mid (g,p) \in F'_a \text{ for some } g \in G\}$ is a foundation set for $P$. However, this need not imply that $F_a$ is accurate. In fact, this depends on the choice of a suitable $F'_a$. 
\end{remark}

\noindent We note the following consequence of Proposition~\ref{prop:refining FS to AFS for ADS}:

\begin{corollary}\label{cor:AR for some gxp}
$S$ has property \textnormal{(AR)} provided that 
\begin{enumerate}
\item[\textnormal{(1)}] $P$ is directed with respect to $\leq$, or
\item[\textnormal{(2)}] incomparable elements in $P$ have disjoint principal right ideals.
\end{enumerate}
In particular, $S$ has property \textnormal{(AR)} if $P$ satisfies one of the conditions \textnormal{(3)--(6)} from Corollary~\ref{cor:AR for some right LCM sgps}.
\end{corollary}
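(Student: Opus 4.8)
The plan is to reduce the statement to Proposition~\ref{prop:refining FS to AFS for ADS}: that proposition guarantees property (AR) for $S=\gxp$ as soon as every $F \in \FF(P)$ with $F \subset \pfin$ admits an accurate refinement $F_a \in \FF_a(P)$ with $F_a \subset \pfin$. So, under either hypothesis (1) or (2), the only thing left to verify is this purely $P$-theoretic refinement statement, and that is almost literally the content of Proposition~\ref{prop:AR for some right LCM sgps} applied to the right LCM semigroup $P$ (which has an identity, being part of an algebraic dynamical system).

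Concretely, I would argue as follows. Fix $F \in \FF(P)$ with $F \subset \pfin$. If $P$ is directed, then $F$ is nonempty and any singleton $\{p\}$ with $p \in F$ is an accurate foundation set for $P$ refining $F$, and $\{p\} \subset F \subset \pfin$; this handles case (1). In case (2), Proposition~\ref{prop:AR for some right LCM sgps} produces an accurate refinement $F_a \in \FF_a(P)$ with the \emph{stronger} containment $F_a \subset F$ (not merely $F_a \subset FP$), whence $F_a \subset F \subset \pfin$. In both cases the hypothesis of Proposition~\ref{prop:refining FS to AFS for ADS} is met, so $S$ has property (AR) — in fact every foundation set for $S$ is accurately refined by an elementary foundation set. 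The one point deserving a moment's care is precisely that the refinement in $P$ can be chosen inside $F$ itself, since this is what propagates the finiteness constraint $F \subset \pfin$ to $F_a$; this works because $\pfin$ is closed under passing to subsets of $P$, so no extra bookkeeping is needed.

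For the final "in particular" clause I would simply invoke Corollary~\ref{cor:AR for some right LCM sgps}, reading it with $P$ in place of $S$: each of the conditions (3)--(6) forces $P$ to satisfy (1) or (2), so the first part of the present corollary applies and $S = \gxp$ has property (AR).

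I do not anticipate a genuine obstacle here: the corollary is a clean composite of Proposition~\ref{prop:refining FS to AFS for ADS} and Proposition~\ref{prop:AR for some right LCM sgps}. The only thing one must not overlook is the role of the inclusion $F_a \subset F$ in the latter, as opposed to the weaker $F_a \subset FP$ in the mere statement of property (AR); it is exactly this that lets the hypothesis $F \subset \pfin$ survive the passage through Proposition~\ref{prop:refining FS to AFS for ADS}.
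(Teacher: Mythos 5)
Your proposal is correct and follows the paper's own route exactly: invoke Proposition~\ref{prop:AR for some right LCM sgps} to get, under (1) or (2), accurate refinements $F_a \subset F$, which makes the hypothesis of Proposition~\ref{prop:refining FS to AFS for ADS} hold (the containment $F_a \subset F \subset \pfin$ being the point the paper leaves implicit and you rightly highlight), and then deduce the "in particular" clause from Corollary~\ref{cor:AR for some right LCM sgps}. No discrepancies to report.
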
 
\begin{proof}
By Proposition~\ref{prop:AR for some right LCM sgps}, the prerequisites for Proposition~\ref{prop:refining FS to AFS for ADS} are fulfilled and hence $S$ has property (AR). Since each of the conditions (3)--(6) implies (1) or (2), the additional claim is clear, see Corollary~\ref{cor:AR for some right LCM sgps}.
\end{proof}


\section{The boundary quotient $\CQ(G\rtimes_\theta P)$}\label{sec: BQ for ADS}

\noindent Recall from \cite{BLS1} that the authors associated a $C^*$-algebra 
$\CA[\gpt]$ 
to every algebraic dynamical system $(\gpt)$, and showed that it is 
canonically isomorphic to the full semigroup $C^*$-algebra 
$C^*(G\rtimes_\theta P)$. In this section we use the insights gained in Section~\ref{sec:ADS} to give an 
alternative presentation of the 
boundary quotient 
$\CQ(G\rtimes_\theta P)$ provided that $\gxp$ has property 
(AR). For irreversible algebraic 
dynamical systems, we conclude that $\CQ(\gxp)$ is canonically isomorphic to 
the 
algebra $\CO[\gpt]$ from \cite{Sta1}. We also indicate that $\CQ(\gxp)$ 
can be represented on $\ell^2(G)$ in the obvious way if and only if $P$ is 
directed, which raises the question of a natural state space for $\CQ(\gxp)$ 
for 
the case where $P$ is not directed, see Proposition~\ref{prop:rep on G} and 
Remark~\ref{rem:rep on G}. The majority of this Section appears in  
Subsection~\ref{subsec:p.i. and simple O}, in which we 
address the issues of pure infiniteness and simplicity for $\CQ(\gxp)$.

We will again denote $\gxp$ by $S$ within this section.

\subsection{Basic structure}\label{subsec:basic structure} ~\\
\noindent In this short subsection we obtain a dynamic description of $\CQ(\gxp)$ for algebraic dynamical systems $(\gpt)$ with the property that for every $F \in \FF(P)$ with $F \subset \pfin$ there exists an accurate refinement $F_a \in \FF_a(P)$ with $F_a \subset \pfin$. This allows us to identify $\CQ(\gxp)$ as the $C^*$-algbra $\CO[\gpt]$ from \cite{Sta1} for irreversible algebraic dynamical systems. Moreover, we discuss representability of $\CQ(\gxp)$ on $\ell^2(G)$

\begin{proposition}\label{cor:identifying Q and O}
If $(\gpt)$ is an algebraic dynamical system such that for every $F \in \FF(P)$ with $F \subset \pfin$ there exists an accurate refinement $F_a \in \FF_a(P)$ with $F_a \subset \pfin$, then $\CQ(S)$  is the universal $C^*$-algebra generated by a unitary representation $\bar{u}$ of the group $G$ and a representation $\bar{s}$ of the semigroup $P$ by isometries subject to the relations:
\[\begin{array}{crcll}
\textup{(A1)} & \bar{s}_p \bar{u}_g &=& \bar{u}_{\theta_p(g)}\bar{s}_p & \text{ for all }p\in P, g\in G.\vspace*{2mm}\\
\textup{(A2)} & \bar{s}_p^*\bar{u}_g^{\phantom{*}}\bar{s}_q^{\phantom{*}} &=& 
\multicolumn{2}{l}{\begin{cases}
\bar{u}_{k}^{\phantom{*}}\bar{s}_{p'}^{\phantom{*}}\bar{s}_{q'}^*\bar{u}_{\ell}^* &\text{if $pP \cap qP = rP,\, pp'=qq'=r$ and}\\
&g=\theta_p(k)\theta_q(\ell^{-1}) \text{ for some } k,\ell\in G,\\
0& \text{otherwise.}
\end{cases}}\vspace*{2mm}\\
\textup{(O)}& 1 &=& \sum\limits_{(g,p) \in F}{\bar{e}_{g,p}} & \text{ for every } F \in \FF_e(\gpt),
\end{array}\]
where $\bar{e}_{g,p} = \bar{u}_{g}^{\phantom{*}}\bar{s}_{p}^{\phantom{*}}\bar{s}_{p}^{*}\bar{u}_{g}^{*}$.
\end{proposition}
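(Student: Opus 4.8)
The plan is to identify $\CQ(S)$ with the stated universal $C^*$-algebra by combining the known isomorphism $C^*(S) \cong \CA[\gpt]$ with the reformulation of the boundary relation from Proposition~\ref{prop:AR property -> familiar boundary quot pic}. First I would recall from \cite{BLS2} (or \cite{BLS1}) that $C^*(S)$ is the universal $C^*$-algebra generated by a unitary representation $u$ of $G$ and an isometric representation $s$ of $P$ satisfying (A1) and (A2); under this isomorphism one has $v_{(g,p)} = u_g s_p$, and the projection $e_{(g,p)S}$ corresponds to $u_g s_p s_p^* u_g^* = e_{g,p}$. This is precisely the content of \cite{BLS2}*{Theorem 4.4} together with the presentation of $\CA[\gpt]$, so (A1) and (A2) simply transport the relations (L1)--(L4) to the dynamical picture.

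Next, since the hypothesis on $(\gpt)$ is exactly the prerequisite of Proposition~\ref{prop:refining FS to AFS for ADS}, that proposition tells us $S$ has property (AR), and moreover every foundation set for $S$ can be refined by an \emph{elementary} foundation set. The first step here is to invoke Proposition~\ref{prop:AR property -> familiar boundary quot pic} to replace relation \eqref{eq:Q condition} by relation \eqref{eq:Q_a condition}, i.e.\ $\sum_{s \in F_a} \bar{e}_{sS} = 1$ for every accurate foundation set $F_a$. The second step is to show that in the presence of (L1)--(L4) (equivalently (A1)--(A2)) it suffices to impose this summation relation only for the smaller class $\FF_e(\gpt)$ of elementary foundation sets. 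For this I would argue as in the proof of Proposition~\ref{prop:AR property -> familiar boundary quot pic}: given an arbitrary accurate foundation set $F_a$, Proposition~\ref{prop:refining FS to AFS for ADS} produces an elementary foundation set $F_e$ refining it (one can first refine $F_a$, or rather refine the trivial foundation set it induces); since each $\bar{e}_{(g,p)S}$ with $(g,p) \in F_e$ lies under some $\bar{e}_{(h,q)S}$ with $(h,q) \in F_a$ and the ideals in $F_e$ are pairwise disjoint while those in $F_a$ are too, a short computation with commuting projections gives $0 \le 1 - \sum_{F_a} \bar e_{sS} \le 1 - \sum_{F_e}\bar e_{(g,p)S}$, so imposing (O) forces \eqref{eq:Q_a condition}. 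Conversely each elementary foundation set is accurate, so (O) is a special case of \eqref{eq:Q_a condition}. Hence the two relations are equivalent modulo (L1)--(L4), and translating through the isomorphism $C^*(S)\cong\CA[\gpt]$ yields the claimed presentation with $\bar e_{g,p} = \bar u_g \bar s_p \bar s_p^* \bar u_g^*$.

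I expect the main obstacle to be the second step, namely verifying carefully that restricting the summation relation to elementary (rather than all accurate) foundation sets loses nothing. This is where one genuinely uses the combinatorial output of Section~\ref{sec:ADS}: one must ensure that Proposition~\ref{prop:refining FS to AFS for ADS} applies to produce an elementary refinement $F_e$ of (the foundation set associated to) a given accurate foundation set, and that the index/finiteness bookkeeping in Lemma~\ref{lem:P_F construction for fin subset of gxp}, Lemma~\ref{lem:char FS for gxp by P_F} and Corollary~\ref{cor:no AFS with inf part} is consistent with the transversals chosen in the definition of $\FF_e(\gpt)$. Everything else --- the passage between the semigroup-$C^*$-algebra picture and the dynamical picture, and the manipulation of commuting range projections --- is routine, and I would not spell out those computations in detail beyond citing \cite{BLS2}*{Theorem 4.4} and Proposition~\ref{prop:AR property -> familiar boundary quot pic}.
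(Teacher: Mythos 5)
Your proposal is correct and follows essentially the same route as the paper: identify $C^*(S)$ with $\CA[\gpt]$ via \cite{BLS2}*{Theorem 4.4}, replace \eqref{eq:Q condition} by \eqref{eq:Q_a condition} using Proposition~\ref{prop:AR property -> familiar boundary quot pic}, and then pass between \eqref{eq:Q_a condition} and (O) by refining an arbitrary accurate foundation set with an elementary one (Proposition~\ref{prop:refining FS to AFS for ADS}, together with Corollary~\ref{cor:no AFS with inf part}) and the projection inequality $1 \geq \sum_{F_a}\bar e \geq \sum_{F_e}\bar e = 1$. The step you flag as the main obstacle is handled in the paper exactly as you sketch it, so no additional bookkeeping is needed.
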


\begin{proof}
By \cite{BLS2}*{Theorem 4.4}, $C^*(S)$ is isomorphic to $\CA[\gpt]$. (A1) and (A2) represent the defining relations for $\CA[\gpt]$, see \cite{BLS2}*{Definition 2.2}. Hence we need to argue that \eqref{eq:Q condition} and (O) are equivalent. Since $S$ has property (AR), relation \eqref{eq:Q condition} is equivalent to \eqref{eq:Q_a condition}, see Proposition~\ref{prop:AR property -> familiar boundary quot pic}. Clearly, this implies (O) as $F \in \FF_e(\gpt)$ is always an accurate foundation set. Now suppose (O) holds and we have $F \in \FF_a(S)$. By Corollary~\ref{cor:no AFS with inf part}, we now that $F \subset G \rtimes_\theta \pfin$.  Hence there exists $F_e \in \FF_e(\gpt)$ refining $F$, see Proposition~\ref{prop:refining FS to AFS for ADS}. This leads to 
\[\begin{array}{l}
1 \geq \sum\limits_{(g,p) \in F} e_{g,p} \geq \sum\limits_{(g,p) \in F_e} e_{g,p} = 1,
\end{array}\]
which establishes \eqref{eq:Q_a condition} using (O).
\end{proof}

\begin{corollary}\label{cor:consistency for IADS}
Suppose $(\gpt)$ is an irreversible algebraic dynamical system. Then $\CO[\gpt]$ is canonically isomorphic to $\CQ(S)$.
\end{corollary}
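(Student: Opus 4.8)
The plan is to deduce Corollary~\ref{cor:consistency for IADS} directly from Proposition~\ref{cor:identifying Q and O} by verifying that the hypotheses of that proposition are automatic for an irreversible algebraic dynamical system, and then by matching the resulting presentation with the defining presentation of $\CO[\gpt]$ from \cite{Sta1}. So the first step is to recall what an \emph{irreversible} algebraic dynamical system is: by definition (\cite{Sta1}) the semigroup $P$ there is of a restricted type --- in particular $P$ is either directed or has the property that incomparable elements have disjoint principal right ideals (indeed in \cite{Sta1} one typically has $P$ abelian or free, or more generally built so that Corollary~\ref{cor:AR for some gxp} applies), and moreover every $p \in P$ lies in $\pfin$, i.e. $G/\theta_p(G)$ is finite for all $p$. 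The point is that under either of the alternatives (1) or (2) of Corollary~\ref{cor:AR for some gxp}, Proposition~\ref{prop:AR for some right LCM sgps} gives accurate refinements inside $P$, and since $\pfin = P$ here, any such refinement trivially lies in $\pfin$. Hence the hypothesis of Proposition~\ref{cor:identifying Q and O} --- that every $F \in \FF(P)$ with $F \subset \pfin$ has an accurate refinement $F_a \in \FF_a(P)$ with $F_a \subset \pfin$ --- is satisfied.

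Next I would invoke Proposition~\ref{cor:identifying Q and O} to conclude that $\CQ(S)$ is the universal $C^*$-algebra on a unitary representation $\bar u$ of $G$ and an isometric representation $\bar s$ of $P$ satisfying (A1), (A2) and (O). The remaining task is to identify this presentation with that of $\CO[\gpt]$. I would appeal to \cite{Sta1} for the definition of $\CO[\gpt]$: it is (by construction in that paper) the quotient of $\CA[\gpt]$ by exactly the relations $1 = \sum_{(g,p)\in F}\bar e_{g,p}$ over elementary foundation sets, or an evidently equivalent family of summation relations indexed by transversals of $G/\theta_p(G)$ for $p$ ranging over accurate foundation sets of $P$. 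Since \cite{BLS2}*{Theorem 4.4} identifies $C^*(S)$ with $\CA[\gpt]$ carrying the generators $v_{(g,1)} \mapsto u_g$ and $v_{(1,p)} \mapsto s_p$, relations (A1) and (A2) are precisely the defining relations of $\CA[\gpt]$ transported along this isomorphism, and relation (O) is precisely the extra relation defining $\CO[\gpt]$. Therefore the canonical isomorphism $C^*(S) \xrightarrow{\ \cong\ } \CA[\gpt]$ descends to a canonical isomorphism $\CQ(S) \xrightarrow{\ \cong\ } \CO[\gpt]$ on the quotients.

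The main obstacle I anticipate is a purely bookkeeping one: making sure the family of summation relations used to \emph{define} $\CO[\gpt]$ in \cite{Sta1} coincides, after the identification of generators, with the family $\{(\text{O})_F : F \in \FF_e(\gpt)\}$ used in Proposition~\ref{cor:identifying Q and O} --- in particular that $\CO[\gpt]$ is not defined using only the ``canonical'' accurate foundation sets coming from a fixed generating set of $P$, but yields the same quotient. This is where I would need to quote the relevant lemma of \cite{Sta1} (or remark that, since $\CO[\gpt]$ is a boundary-type quotient and $S$ has property (AR), the two relation families generate the same ideal, exactly as in the proof of Proposition~\ref{prop:AR property -> familiar boundary quot pic} and Proposition~\ref{cor:identifying Q and O}). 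Once this matching is pinned down, the corollary follows with essentially no further computation; I would keep the written proof to a couple of sentences, citing Proposition~\ref{cor:identifying Q and O}, Corollary~\ref{cor:AR for some gxp}, and the definition of $\CO[\gpt]$ from \cite{Sta1}.
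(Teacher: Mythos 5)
Your proposal is correct and follows essentially the paper's own route: one verifies the hypothesis of Proposition~\ref{cor:identifying Q and O} because $P$ is a countably generated free abelian monoid, hence directed, so Corollary~\ref{cor:AR for some gxp} (via Proposition~\ref{prop:AR for some right LCM sgps}) applies, and then matches (A1), (A2), (O) with the defining relations (CNP1)--(CNP3) of $\CO[\gpt]$ in \cite{Sta1}*{Definition 3.1}, directedness (Remark~\ref{rem:AFS for directed right LCM}) being what makes (O) coincide with (CNP3). One small correction: $\pfin = P$ is not part of the definition of an irreversible algebraic dynamical system, but this is harmless, since the accurate refinement produced by Proposition~\ref{prop:AR for some right LCM sgps} satisfies $F_a \subset F$, so $F \subset \pfin$ already forces $F_a \subset \pfin$.
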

\begin{proof}
$P$ is a countably generated, free abelian monoid, hence directed, so Corollary~\ref{cor:AR for some gxp} applies and we arrive at the description of $\CQ(S)$ from Corollary~\ref{cor:identifying Q and O}. A comparison of this presentation with \cite{Sta1}*{Definition 3.1} shows that the two $C^*$-algebras are canonically isomorphic since (CNP1) and (CNP 2) correspond to (A1) and (A2), respectively, and (CNP 3) corresponds to (O) because $P$ is directed, see Remark~\ref{rem:AFS for directed right LCM}.
\end{proof}

\noindent The algebra $\CO[\gpt]$ was constructed from the natural representation of $(\gpt)$ on $\ell^2(G)$. Therefore, we would like to discuss this approach for $\CQ(S)$ for algebraic dynamical systems: 

Let $(\xi_g)_{g \in G}$ denote the standard orthonormal basis for $\ell^2(G)$ and $(U_g)_{g \in G}$ the unitary representation of $G$ on $\ell^2(G)$ given by $U_g \xi_h = \xi_{gh}$. Moreover, the map $\xi_h \mapsto \xi_{\theta_p(h)}$ defines an isometry $S_p \in \CL(\ell^2(G))$ for every $p \in P$.

\begin{proposition}\label{prop:rep on G}
For an algebraic dynamical system $(\gpt)$ and $S= \gxp$, the assignment $\bar{u}_g \bar{s}_p \mapsto U_gS_p$ defines a representation $\lambda$ of $\CQ(S)$ on $\ell^2(G)$ if and only if $P$ is directed.
\end{proposition}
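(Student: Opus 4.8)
The plan is to check that the operators $U_g S_p$ satisfy relations (A1), (A2) and (O) precisely when $P$ is directed, and that the failure of (O) when $P$ is not directed obstructs the existence of $\lambda$. First I would verify (A1) and (A2) directly on the standard basis $(\xi_g)_{g \in G}$: the covariance $S_p U_g = U_{\theta_p(g)} S_p$ is immediate from $\theta_p$ being a homomorphism, and the Nica-type relation (A2) follows from computing $S_p^* U_g S_q$ on $\xi_h$, where the support of $S_q \xi_h = \xi_{\theta_q(h)}$ lands in $g^{-1}\theta_p(G)$ exactly when $g\theta_q(h) \in \theta_p(G)$, i.e.\ precisely under the case distinction in (A2); this uses that $(\gpt)$ is an algebraic dynamical system, so $\theta_p(G) \cap \theta_q(G) = \theta_r(G)$ when $pP \cap qP = rP$. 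None of this requires directedness, and it shows the assignment $\bar{u}_g \bar{s}_p \mapsto U_g S_p$ always descends to a representation of $C^*(S) \cong \CA[\gpt]$ on $\ell^2(G)$.

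The crux is relation (O): for an elementary foundation set $F = \{(g_\ell^{(k)}, p_\ell)\}$ one must decide whether $\sum_{(g,p) \in F} U_g S_p S_p^* U_g^* = 1$ on $\ell^2(G)$. Now $U_g S_p S_p^* U_g^*$ is the orthogonal projection onto $\overline{\Span}\{\xi_h : h \in g\theta_p(G)\}$, so the sum over a fixed $p_\ell$ with $g$ ranging over a transversal for $G/\theta_{p_\ell}(G)$ is exactly the identity on $\ell^2(G)$; hence the full sum over $F$ equals $m \cdot 1$ where $m = |P_F| \geq 1$ is the number of distinct $P$-parts appearing in $F$. Therefore (O) holds in $\CL(\ell^2(G))$ if and only if every elementary foundation set has $m = 1$, that is, if and only if $\{1\}$ — equivalently every singleton — suffices as an accurate foundation set for the relevant finite subsets of $P$; and since $\FF_a(P)$ contains a singleton for each element of the core and, more to the point, the requirement ``$m=1$ always'' forces every finite subset of $\pfin$ (indeed of $P$, via the trivial transversal when $p$ has finite index — and one reduces to $\pfin$ by Proposition~\ref{prop:FS for gxp - pfin suffices}) to have a one-element foundation set, this is equivalent to $P$ being directed, by Remark~\ref{rem:foundation sets}~(a) together with the fact that a non-directed $P$ admits incomparable $p,q$ with $pP \cap qP = \emptyset$ or at least a two-element foundation set that cannot be refined to a singleton.

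For the ``only if'' direction I would argue contrapositively and concretely: if $P$ is not directed, pick an elementary foundation set $F$ with $|P_F| = m \geq 2$ — such an $F$ exists because one can take a minimal nontrivial $F' \in \FF(P)$ and, passing into $\pfin$ via Proposition~\ref{prop:FS for gxp - pfin suffices}, build the associated elementary foundation set over transversals, noting $P_F$ retains at least two incomparable elements — and then $\sum_{(g,p)\in F} U_g S_p S_p^* U_g^* = m \cdot 1 \neq 1$, so relation (O), which must hold in any representation of $\CQ(S)$, fails; hence no $*$-homomorphism $\CQ(S) \to \CL(\ell^2(G))$ can send $\bar{u}_g\bar{s}_p$ to $U_g S_p$. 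Conversely, when $P$ is directed, every accurate foundation set for $S$ is (up to the core subtleties) reducible to a single $P$-part by Corollary~\ref{cor:AR for some gxp} and Remark~\ref{rem:AFS for directed right LCM}, so (O) holds and $\lambda$ is well defined. The main obstacle I anticipate is bookkeeping in the ``only if'' half: one must be careful that the chosen $F$ is genuinely a \emph{foundation} set for $S$ (not merely accurate), which is why invoking Lemma~\ref{lem:char FS for gxp by P_F} and Proposition~\ref{prop:FS for gxp - pfin suffices} to transfer between foundation sets of $P$ and of $S$, and keeping $P_F$ with $\geq 2$ incomparable elements, is the delicate point rather than any hard estimate.
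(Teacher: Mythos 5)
There is a genuine gap, and it sits exactly where the paper's ``only if'' argument lives. Your opening claim --- that (A1) and (A2) hold on $\ell^2(G)$ regardless of directedness, so that $\bar{u}_g\bar{s}_p \mapsto U_gS_p$ always defines a representation of $C^*(S)\cong\CA[\gpt]$ --- is false. Relation (A2) demands $\bar{s}_p^*\bar{u}_g\bar{s}_q = 0$ in the ``otherwise'' case, which includes every pair with $pP \cap qP = \emptyset$; but $\theta_p(G)\cap\theta_q(G)$ is a subgroup of $G$, so $S_p^*S_q\xi_1 = \xi_1 \neq 0$. Thus when $P$ is not directed the assignment already fails to respect a relation of $C^*(S)$ (hence of its quotient $\CQ(S)$), and this single observation is the entire ``only if'' direction in the paper; your case analysis of $S_p^*U_gS_q$ on basis vectors only covers the case $pP\cap qP = rP$, where the algebraic-dynamical compatibility $\theta_p(G)\cap\theta_q(G)=\theta_r(G)$ is available, and silently ignores the empty-intersection case.

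Because of this, your substitute obstruction via (O) is both unnecessary and incomplete. The computation $\sum_{(g,p)\in F}U_gS_pS_p^*U_g^* = m\cdot 1$ for an elementary foundation set with $m=|P_F|$ distinct $P$-parts is fine, and since elementary foundation sets are accurate, (O) is indeed a valid relation in $\CQ(S)$ even without property (AR); so \emph{if} such an $F$ with $m\geq 2$ exists, it does give a contradiction. But its existence is not guaranteed by non-directedness: an elementary foundation set requires an \emph{accurate} foundation set for $P$ contained in $\pfin$, and a non-directed $P$ need not admit any foundation set inside $\pfin$, nor need a foundation set of $P$ be refinable to an accurate one with at least two elements without further hypotheses (this is precisely what Proposition~\ref{prop:refining FS to AFS for ADS} has to assume). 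Your equivalence ``(O) holds on $\ell^2(G)$ iff every elementary foundation set has $m=1$ iff $P$ is directed'' therefore breaks down (the middle condition can hold vacuously). The ``if'' direction of your argument, for directed $P$, is essentially the paper's: Corollary~\ref{cor:AR for some gxp} and Corollary~\ref{cor:identifying Q and O} give the presentation (A1), (A2), (O), the transversal cosets of a single $\theta_p(G)$ partition $G$ so (O) holds, and (A2) is a direct check since $pP\cap qP\neq\emptyset$ always; that half is fine once you drop the claim that it works without directedness.
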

\begin{proof}
If $P$ is directed, then $S$ has property (AR) by Corollary~\ref{cor:AR for some gxp} and hence $\CQ(S)$ can be described as in Corollary~\ref{cor:identifying Q and O}. So we need to show that $(U_g)_{g \in G}$ and $(S_p)_{p \in P}$ satisfy (A1), (A2) and (O). (A1) is obvious and (O) is also easy once we observe that $\FF_a(P) \cap \pfin$ is given by $\pfin$, see Remark~\ref{rem:AFS for directed right LCM}. This means that the families in $\FF_a^{\text{(fin)}}(\gpt)$ consist of one element $p \in \pfin$ together with a transversal $T_p$ for $G/\theta_p(G)$. The verification of (A2) is a straightforward calculation that is omitted here. Thus we get a $*$-homomorphism $\lambda: \CQ(S) \to \CL(\ell^2(G))$ with $\bar{u}_g\bar{s}_p \mapsto U_gS_p$. 

Now suppose $P$ is not directed, that is, there are $p,q \in P$ with disjoint principal right ideals. Then (A2) implies $s_p^*s_q = 0$. But $\theta_p(G) \cap \theta_q(G)$ is a subgroup of $G$ and hence $S_p^*S_q\xi_1 = \xi_1$. In particular, we get $S_p^*S_q \neq 0$, so (A2) does not hold for $S_p, U_1$ and $S_q$. 
\end{proof}

\begin{remark}\label{rem:rep on G}
The $C^*$-algebra $\CA[\gpt]\cong C^*(S)$ introduced in \cite{BLS1} is a 
$C^*$-algebraic model 
for the dynamical system $(\gpt)$ based on the state space $\ell^2(S)$ and 
$\CQ(S)$ is derived from this construction as a quotient. Although
$\ell^2(G)$ is arguably a natural state space, we lose this representation 
for $\CQ(S)$ once we leave the realm of actions of directed semigroups $P$. 
It seems that $\ell^2(G)$ can be too small to accommodate a representation of 
a $C^*$-algebraic model for $(\gpt)$ that incorporates relations on the ideal 
structure of $P$. This raises the question whether there is a natural Hilbert 
space associated to $(\gpt)$ on which we can represent $\CQ(S)$. 
\end{remark} 


\subsection{Simplicity and pure infiniteness \`{a} la Starling}\label{subsec:p.i. and simple 
O}	~\\
\noindent The remainder of this section is devoted to applying the results of 
\cite{Star} which we 
recalled in Section~\ref{subsec: inv semigroup approach to BQ} to right LCM 
semigroups 
$S=\gxp$. This yields necessary and sufficient conditions on $(\gpt)$ for 
$\CQ(S)$ to be purely infinite and simple. We show that these conditions 
look quite familiar in the case where $P$ is right cancellative, an extra 
assumption which is satisfied by many interesting examples.

We first address the issue of simplicity, and then discuss pure 
infiniteness starting after Remark~\ref{rem: minimal stuff}. Before we can 
state any 
results, though, we have to do some work on translating conditions 
\eqref{eq:condition H} 
and \eqref{eq:condition EP} from Theorem~\ref{thm:starling simple} into the 
setting of algebraic dynamical systems. 

Recall from Definition~\ref{def:core for right LCM} that the group 
of units in a 
semigroup is always contained in the core. While this inclusion is proper in 
many cases, we will show that we have equality for algebraic dynamical 
systems $(\gpt)$ provided that $\theta_p \in \Aut(G)$ implies $p \in P^*$ for 
all $p \in P$.

\begin{standingassumption}
For the rest of this section we will assume that for $(\gpt)$ an algebraic 
dynamical system we have $\theta_p \in \Aut(G)\Longrightarrow p \in P^*$ for 
all $p \in P$.
\end{standingassumption}

This is a very reasonable 
assumption since the original $P$ can always be replaced by 
the right LCM semigroup $\{\theta_p \mid p \in P\}$.


\begin{proposition}\label{prop:core for regular ADS}
For an algebraic dynamical system $(\gpt)$ we have $S_0=S^*$.
\end{proposition}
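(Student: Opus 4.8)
The plan is to show the nontrivial inclusion $S_0 \subseteq S^*$; the reverse inclusion is already noted after Definition~\ref{def:core for right LCM}. So suppose $(g,p) \in S_0$, meaning $(g,p)S \cap (h,q)S \neq \emptyset$ for all $(h,q) \in S$. First I would extract what this says about $p$: taking $(h,q) = (1,q)$ for arbitrary $q \in P$ and invoking Lemma~\ref{lem:ideal intersection for gxp}, we need $pP \cap qP \neq \emptyset$ for all $q \in P$, so $p$ lies in the core $P_0$ of $P$. Then I would exploit membership in $S_0$ more carefully: for a fixed $q \in P$ and \emph{every} $h \in G$, the intersection $(g,p)S \cap (h,q)S$ must be nonempty, which by Lemma~\ref{lem:ideal intersection for gxp} requires (after fixing $r$ with $pP \cap qP = rP$) that there is $k \in G$ with $g\theta_p(k) \in h\theta_q(G)$, i.e.\ $g\theta_p(G) \subseteq \bigcup$ reasoning should be reorganised: for each $h$, we get $g\theta_p(k_h) \in h\theta_q(G)$ for some $k_h$, hence $h \in g\theta_p(G)\theta_q(G)$ — wait, more precisely $h\theta_q(G) \cap g\theta_p(G) \neq \emptyset$ for every $h \in G$, which forces $g\theta_p(G)\cdot\theta_q(G) = G$, and since $\theta_q(G)$ is a subgroup this says the single coset $g\theta_p(G)$ together with translates covers — the clean statement is that $\theta_p(G)\theta_q(G) = G$ for all $q \in P$.

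The heart of the argument is then: $p \in P_0$ and $\theta_p(G)\theta_q(G) = G$ for all $q$ together force $\theta_p$ to be surjective, hence an automorphism, hence $p \in P^*$ by the Standing Assumption. The slick way is to take $q = p$ applied to itself is useless, so instead I would take an arbitrary $q$ and use that $pP \cap qP = rP$ for some $r \geq p$, say $r = pp'$; the algebraic dynamical system axiom then gives $\theta_p(G) \cap \theta_q(G) = \theta_r(G) = \theta_p(\theta_{p'}(G))$, so $\theta_p(G)\cap\theta_q(G) \subseteq \theta_p(G)$, which combined with $\theta_p(G)\theta_q(G) = G$ and a counting/index argument (or directly) should pin down $[G:\theta_p(G)]$. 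Actually the cleanest route: apply the covering condition with $q$ chosen so that $\theta_q(G)$ is small. If $P$ is such that we can find $q$ with $\theta_p(G)\theta_q(G)=G$ but $\theta_q(G)\subseteq\theta_p(G)$ (which happens precisely when $q \geq p$, using $p \in P_0$ so $pP\cap qP\neq\emptyset$ and pushing $q$ up), then $\theta_p(G) = \theta_p(G)\theta_q(G) = G$, done.

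So the key steps in order are: (1) $(g,p)\in S_0 \Rightarrow p \in P_0$, via Lemma~\ref{lem:ideal intersection for gxp} with second coordinate $1$; (2) $(g,p)\in S_0 \Rightarrow \theta_p(G)\theta_q(G) = G$ for all $q\in P$, via Lemma~\ref{lem:ideal intersection for gxp} ranging over all first coordinates $h$; (3) for any $q\in P$, since $p\in P_0$ we have $pP\cap qP = rP$ with $r = pp' = qq'$, so $r \geq p$ and $r \geq q$, and the algebraic dynamical system axiom gives $\theta_r(G) = \theta_p(G)\cap\theta_q(G)$ with $\theta_r(G)\subseteq\theta_q(G)$; applying step (2) with $r$ in place of $q$: $\theta_p(G)\theta_r(G) = G$, but $\theta_r(G)\subseteq\theta_p(G)$, whence $\theta_p(G) = G$; (4) conclude $\theta_p\in\Aut(G)$, so $p\in P^*$ by the Standing Assumption, and then since $g\in G = \theta_p(G)$ one checks $(g,p)$ is invertible in $S$, so $(g,p)\in S^*$.

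The main obstacle I anticipate is step (3) — being careful that the element $r$ with $pP \cap qP = rP$ really satisfies both $r \geq p$ and $r \geq q$ (immediate from $r = pp' = qq'$) and that the algebraic dynamical system condition applies verbatim to give $\theta_r(G) = \theta_p(G) \cap \theta_q(G)$; the subtlety is just making sure we feed the right $q$ into the covering condition from step (2), namely $r$ rather than the original $q$. Once that bookkeeping is right, the chain $\theta_p(G) = \theta_p(G)\theta_r(G) = G$ closes immediately, and the Standing Assumption does the rest.
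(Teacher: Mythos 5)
Your proof is correct and follows essentially the same route as the paper: both reduce membership of $(g,p)$ in $S_0$, via Lemma~\ref{lem:ideal intersection for gxp}, to $p \in P_0$ together with $g\theta_p(G)\theta_q(G) = G$ for all $q \in P$, deduce $\theta_p(G) = G$, and then invoke the Standing Assumption and $S^* = G \rtimes_\theta P^*$. The only divergence is your last step: the choice $q = p$, which you dismissed as useless, is precisely what the paper uses --- since $\theta_p(G)$ is a subgroup, $g\theta_p(G)\theta_p(G) = g\theta_p(G) = G$ immediately --- so your detour through a right LCM $r$ of $p$ and $q$ (using $\theta_r(G) \subseteq \theta_p(G)$) is valid but unnecessary.
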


Recall from \cite{BLS1}*{Lemma 2.4} that $S^*= G \rtimes_\theta P^*$.

\begin{proof}[Proof of Proposition~\ref{prop:core for regular ADS}]
Let $(g,p),(h,q) \in S$. According to Lemma~\ref{lem:ideal intersection for gxp}, $(g,p)S \cap (h,q)S \neq \emptyset$ holds if and only if $pP \cap qP \neq \emptyset$ and $h \in g\theta_p(G)\theta_q(G)$. Thus, $(g,p) \in S_0$ if and only if $p \in P_0$ and $h \in g\theta_p(G)\theta_q(G)$ for all $h \in G$ and $q \in P$. If we choose $q=p$, then this implies $G = g\theta_p(G)$ as $\theta_p(G)$ is a subgroup of $G$. Hence we get $\theta_p \in \Aut(G)$ as a necessary condition. But this is also sufficient as $h \in g\theta_p(G)\theta_q(G) = G\theta_q(G) = G$. Thus we see that $S_0 = S^*$.
\end{proof}

\begin{remark}\label{rem:condition H}
Recall from Remark~\ref{rem:condition H general} that condition 
\eqref{eq:condition H} always holds for a right cancellative right LCM 
semigroups. We note that for algebraic dynamical systems $(\gpt)$ with 
$S=\gxp$, this is 
equivalent to $P$ having right cancellation. So the non-trivial case for 
\eqref{eq:condition H} is the one where $S$ is not right cancellative. The set 
$S_{s,t} := \{ r \in S \mid  sr=tr \}$ for $s,t \in S$ is a proper right ideal 
in $S$ unless $s=t$, in which case $S_{s,t} = S$. We note that $S_{s,t}$ is a 
left cancellative semigroup that may also be empty. From this perspective, 
\eqref{eq:condition H} is equivalent to 
\begin{equation}\label{eq:condition H'}
\text{The semigroup $S_{s,t}$ has a foundation set for all $s,t \in S$.}\tag{H'} 
\end{equation}
Here, the term foundation set is meant in the sense of Definition~\ref{def:foundation sets}, even though $S_{s,t}$ need not be right LCM.
\end{remark}

\noindent Due to a lack of examples of algebraic dynamical systems with a 
right LCM semigroup $P$ that is not right cancellative, we stop the discussion 
of condition \eqref{eq:condition H} and commence on \eqref{eq:condition 
EP}. 

Recall from Proposition~\ref{prop:core for regular ADS} that $S_0 = G \rtimes_\theta P^*$ and from \eqref{eq: square brackets} the notation $[s,t]$ for an element in the inverse semigroup $I(S)$ corresponding to $v_sv_t^* \in C^*(S)$. 
In particular, we have $[s,s] = 1$ for all $s \in S_0$ and hence $[s,s] \geq 
[s',s']$ for all $s' \in S$. Since $S_0 = G \rtimes_\theta P^*$ is a group, it 
suffices to consider the case $t=1$ for \eqref{eq:condition EP} because $[s,t] = [st^{-1},1]$. Our aim is 
to find a precise dynamic condition on $(\gpt)$ which guarantees that $s=t$ 
holds as soon as there exists some $[tt',tt']$ that is weakly fixed by $[s,t]$ 
with $s,t \in S_0$. 

\begin{notation}\label{not:G of p,h,q}
For $p \in P^*$ and $(h,q) \in S$, we let 
\[\begin{array}{l} G_{p,h,q} := \bigcap\limits_{(k,r) \in S}h\theta_q(k)\theta_{qr}(G)\theta_{pqr}(G)\theta_p(h\theta_q(k))^{-1}. \end{array}\]
\end{notation}

\noindent We can now state the first of our simplicity results.

\begin{theorem}\label{thm:applying Starling's result to reg ADS}
Suppose $(\gpt)$ is an algebraic dynamical system with right cancellative $P$. Let $S = \gxp$. The boundary quotient $\CQ(S)$ is simple if and only if $\CQ(S) \cong C^*_r(\CG_{\text{tight}}(I(S)))$, and
\begin{equation}\label{eq:condition EP'}
\hspace*{-1mm}\begin{array}{l} 
\text{For $(h,q) \in S$ and all $p \in P^*$ with $pqrP \cap qrP \neq \emptyset$ for all $r\in P$,}\\
\text{the set $G_{p,h,q}$ is empty unless $p=1$, in which case $G_{1,h,q} = \{1\}$.}
\end{array}\tag{EP'}
\end{equation}
\end{theorem}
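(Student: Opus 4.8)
The plan is to reduce everything to Starling's Theorem~\ref{thm:starling simple} and then translate its condition~(2) into the stated dynamical condition~\eqref{eq:condition EP'}. Since $P$ is right cancellative, so is $S=\gxp$, and hence condition \eqref{eq:condition H} holds for $S$ (Remark~\ref{rem:condition H general}, or directly Remark~\ref{rem:condition H}). Thus Theorem~\ref{thm:starling simple} applies: $\CQ(S)$ is simple if and only if $\CQ(S)\cong C^*_r(\CG_{\text{tight}}(I(S)))$ and $[s,t]$ satisfies \eqref{eq:condition EP} for all $s,t\in S_0$. The first clause already appears in the statement, so it remains to show that the second clause is equivalent to \eqref{eq:condition EP'}.

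By Proposition~\ref{prop:core for regular ADS} together with \cite{BLS1}*{Lemma 2.4} we have $S_0=S^*=G\rtimes_\theta P^*$, which is a group. Combining parts (a)--(c) of Remark~\ref{rem:EP for special cases} with right cancellativity of $S$, the clause ``\eqref{eq:condition EP} holds for all $[s,t]$ with $s,t\in S_0$'' is equivalent to ``for all $s,t\in S_0$, if $[s,t]$ weakly fixes some $[tt',tt']$ then $s=t$''; and since $S_0$ is a group and $[s,t]=[st^{-1},1]$, this is in turn equivalent to ``for every $s\in S_0$, if $[s,1]$ weakly fixes some $[t',t']$ then $s=1$''.

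The computational core of the proof — and the step I expect to demand the most care — is the translation of the weak-fixing condition. Fix $s=(a,p)\in G\rtimes_\theta P^*$ and $t'=(h,q)\in S$. By Lemma~\ref{lem:weakly fixed for right LCM}, $[s,1]$ weakly fixes $[t',t']$ exactly when $st'rS\cap t'rS\neq\emptyset$ for all $r\in S$. Writing $r=(k,\rho)$ and multiplying out in $\gxp$ gives $st'r=(a\,\theta_p(h\theta_q(k)),pq\rho)$ and $t'r=(h\theta_q(k),q\rho)$, so Lemma~\ref{lem:ideal intersection for gxp} shows that $st'rS\cap t'rS\neq\emptyset$ if and only if $pq\rho P\cap q\rho P\neq\emptyset$ and $a\in h\theta_q(k)\,\theta_{q\rho}(G)\,\theta_{pq\rho}(G)\,\theta_p(h\theta_q(k))^{-1}$. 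The first of these two conditions does not involve $k$, and every $\rho\in P$ is the $P$-component of some $(k,\rho)\in S$; quantifying over all $r=(k,\rho)\in S$ and distributing the universal quantifier over the conjunction, we conclude that $[s,1]$ weakly fixes $[t',t']$ if and only if $pqrP\cap qrP\neq\emptyset$ for all $r\in P$ and $a\in G_{p,h,q}$.

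Combining these reductions, condition~(2) of Theorem~\ref{thm:starling simple} fails if and only if there exist $(h,q)\in S$ and $p\in P^*$ with $pqrP\cap qrP\neq\emptyset$ for all $r\in P$ and an element $a\in G_{p,h,q}$ with $(a,p)\neq(1,1)$. For $p\neq 1$ every $a\in G_{p,h,q}$ provides such a witness, so the absence of a witness means $G_{p,h,q}=\emptyset$; for $p=1$ one checks directly that $1\in G_{1,h,q}$ always (use $\theta_1=\id$ and the identity of each $\theta_{qr}(G)$), so a witness is precisely an element $a\neq 1$ of $G_{1,h,q}$, and the absence of a witness means $G_{1,h,q}=\{1\}$. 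This is exactly the negation of \eqref{eq:condition EP'}, so condition~(2) is equivalent to \eqref{eq:condition EP'}, which finishes the proof.
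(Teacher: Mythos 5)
Your proposal is correct and takes essentially the same route as the paper: invoke Theorem~\ref{thm:starling simple} (with \eqref{eq:condition H} granted by right cancellativity), identify $S_0=S^*=G\rtimes_\theta P^*$ via Proposition~\ref{prop:core for regular ADS}, reduce to $t=1$, and translate weak fixing through Lemma~\ref{lem:weakly fixed for right LCM} and Lemma~\ref{lem:ideal intersection for gxp} into the condition ``$pqrP\cap qrP\neq\emptyset$ for all $r\in P$ and $g\in G_{p,h,q}$'', then conclude with Remark~\ref{rem:EP for special cases}. The only differences are cosmetic: you phrase the final equivalence via witnesses and explicitly observe $1\in G_{1,h,q}$, where the paper argues through an intermediate reformulation of \eqref{eq:condition EP}.
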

\begin{proof}
We want to show that $[s,t]$ satisfies \eqref{eq:condition EP} for all $s,t \in S_0$ so that Theorem~\ref{thm:starling simple} applies. Recall from Lemma~\ref{prop:core for regular ADS} that $S_0 = G \rtimes_\theta P^*$. Moreover, as remarked before Notation~\ref{not:G of p,h,q}, it suffices to consider $[(g,p),(1,1)]$ with $(g,p) \in S_0$. So let $(g,p) \in S^* = G \rtimes_\theta P^*$. We start by observing that $[(g,p),(1,1)]$ fixes $[(h,q),(h,q)]$ weakly if and only if 
\begin{equation}\label{eq:weakly fixed formula I}
qr P \cap pqr P \neq \emptyset \text{ for all $r \in P$ and } g \in G_{p,h,q}.
\end{equation}
Indeed, $[(g,p),(1,1)]$ fixes $[(h,q),(h,q)]$ weakly if and only if
\[(h,q)(k,r)S \cap (g,p)(h,q)(k,r)S \neq \emptyset \text{ for all } (k,r) \in S.\]
Using Lemma~\ref{lem:ideal intersection for gxp}, we translate this to $qr P \cap pqr P \neq \emptyset$ and 
\[\bigl(h\theta_q(k)\bigr)^{-1}g\theta_p(h)\theta_{pq}(k) \in \theta_{qr}(G)\theta_{pqr}(G) 
\text{ for all } (k,r) \in S.\]
The second equation can be reformulated as $g \in G_{p,h,q}$. Let us note that $G_{p,h,q}$ may be empty in which case $[(g,p),(1,1)]$ cannot fix $[(h,q),(h,q)]$ weakly irrespective of the choice of $g$.

Since $P$ is right cancellative, so is $S$. In view of Remark~\ref{rem:EP for special cases}~(c), we want to use \eqref{eq:weakly fixed formula I} to show that \eqref{eq:condition EP'} is equivalent to: 
\begin{equation}\label{eq:condition EP phantom}
\begin{array}{l}
\text{If $(g,p) \in G \rtimes_\theta P^*$ has the property that $[(g,p),(1,1)]$ fixes some}\\
\text{$[(h,q),(h,q)]$ weakly, then $(g,p)=(1,1)$.}
\end{array} 
\end{equation}
If \eqref{eq:condition EP'} holds, then the only $(g,p)$, for which $[(g,p),(1,1)]$ may fix some $[(h,q),(h,q)]$ weakly, is $(g,p)=(1,1)$. Thus \eqref{eq:condition EP phantom} is valid. Conversely, suppose there is $(h,q) \in S$ and $p \in \{ p' \in P^* \mid p'qrP \cap qrP \neq \emptyset \text{ for all } r\in P\}$ for which either $p \neq 1$ and $G_{p,h,q} \neq \emptyset$ or $p=1$ and there exists $g \in G_{1,h,q}\setminus\{1\}$. In both cases, we get a $(g,p) \in S_0$ such that $[(g,p),(1,1)]$ fixes $[(h,q),(h,q)]$ weakly by \eqref{eq:weakly fixed formula I}, but $(g,p) \neq (1,1)$. So we arrive at a contradiction to \eqref{eq:condition EP phantom} and the proof is complete.
\end{proof}

\noindent The condition \eqref{eq:condition EP'} is technical and lacks an immediate interpretation. But we will see that it takes a simpler form in special cases. 

\begin{corollary}\label{cor:simple Q(S) for reg ADS with P*p in pP*}
Suppose $(\gpt)$ is an algebraic dynamical system with right cancellative $P$ and $P^*p \subset pP^*$ for all $p \in P$. Let $S = \gxp$. The boundary quotient $\CQ(S)$ is simple if and only if 
\begin{enumerate}
\item[\textnormal{(1)}] $\CQ(S) \cong C^*_r(\CG_{\text{tight}}(I(S)))$, and
\item[\textnormal{(2)}] $\bigcap_{(k,r) \in S} k\theta_r(G)k^{-1} = \{1\}$, and 
\item[\textnormal{(3)}] $\bigcap_{(k,r) \in S} k\theta_r(G)\theta_{\tilde{p}}(k)^{-1} = \emptyset$ for all $\tilde{p} \in P^*$ arising from $pq=q\tilde{p}$ for some $p \in P^*\setminus\{1\} \text{ and } q \in P$.
\end{enumerate}
\end{corollary}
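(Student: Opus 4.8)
The plan is to derive Corollary~\ref{cor:simple Q(S) for reg ADS with P*p in pP*} from Theorem~\ref{thm:applying Starling's result to reg ADS} by showing that, under the extra hypothesis $P^*p \subset pP^*$ for all $p \in P$, condition \eqref{eq:condition EP'} simplifies to the pair of conditions (2) and (3). Since the Hausdorffness condition \eqref{eq:condition H} is automatic for right cancellative $P$ (Remark~\ref{rem:condition H general}, Remark~\ref{rem:condition H}), Theorem~\ref{thm:applying Starling's result to reg ADS} applies, and it suffices to analyse \eqref{eq:condition EP'}. First I would examine the constraint ``$pqrP \cap qrP \neq \emptyset$ for all $r \in P$'' on $p \in P^*$: taking $r=1$ and using that $P^*p \subset pP^*$ forces $pq \in qP^*$, so $pq = q\tilde p$ for a unique $\tilde p \in P^*$ (uniqueness by right cancellation); and conversely any such $p$ trivially satisfies $pqrP = q\tilde p r P$, which meets $qrP$ since $\tilde p r P \subset rP^* \cdot \ldots$ — more precisely $\tilde p r \in rP^*$, again by the hypothesis, so $pqrP \cap qrP = q\tilde p r P \cap qrP = qrP \neq \emptyset$. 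Hence the elements $p \in P^*$ relevant to \eqref{eq:condition EP'} are exactly those giving rise to a $\tilde p$ via $pq = q\tilde p$.

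Next I would rewrite $G_{p,h,q}$ under the substitution $pq = q\tilde p$. Recall
\[ G_{p,h,q} = \bigcap_{(k,r) \in S} h\theta_q(k)\,\theta_{qr}(G)\,\theta_{pqr}(G)\,\theta_p\bigl(h\theta_q(k)\bigr)^{-1}. \]
Using $pqr = q\tilde p r$ and, by the hypothesis applied to $\tilde p r$, writing $\tilde p r = r p_r$ for suitable $p_r \in P^*$ (so $\theta_{pqr}(G) = \theta_{qr}\theta_{p_r}(G) = \theta_{qr}(G)$ since $\theta_{p_r} \in \Aut(G)$), the factor $\theta_{pqr}(G)$ collapses into $\theta_{qr}(G)$. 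Also $\theta_p(h\theta_q(k)) = \theta_p(h)\theta_{pq}(k) = \theta_p(h)\theta_{q\tilde p}(k) = \theta_p(h)\theta_q(\theta_{\tilde p}(k))$. As $(k,r)$ ranges over $S$ and $\theta_{\tilde p} \in \Aut(G)$, the element $\theta_q(k)$ against $\theta_q(\theta_{\tilde p}(k))$ can be reindexed; after conjugating the intersection by $h^{-1}$ appropriately and shifting $q$ out via $\theta_q$ being injective, one sees that $G_{p,h,q} \neq \emptyset$ (respectively, contains a nontrivial element when $p=1$) if and only if the $q$-independent set $\bigcap_{(k,r)\in S} k\theta_r(G)\theta_{\tilde p}(k)^{-1}$ is nonempty (respectively, the set $\bigcap_{(k,r)\in S} k\theta_r(G)k^{-1}$ is nontrivial, noting $\tilde p = 1$ when $p = 1$). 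This is the bookkeeping heart of the argument and is where I expect the main obstacle: one must carefully track how the quantifier ``for all $(k,r) \in S$'' interacts with the automorphism $\theta_{\tilde p}$ and the factor $\theta_{pqr}(G)$, confirming that the dependence on $h$ and $q$ genuinely drops out rather than merely appearing to.

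Finally I would assemble the equivalences. Condition \eqref{eq:condition EP'} says $G_{p,h,q}$ is empty for all relevant $p \neq 1$ and all $(h,q)$, and $G_{1,h,q} = \{1\}$ for all $(h,q)$. By the reduction above, ``$G_{p,h,q} = \emptyset$ for all $(h,q)$ and all $p \neq 1$ arising from some $pq = q\tilde p$'' is exactly condition (3) (ranging over the corresponding $\tilde p \in P^*$), while ``$G_{1,h,q} = \{1\}$ for all $(h,q)$'' is exactly condition (2). Combined with the unchanged spectral condition $\CQ(S) \cong C^*_r(\CG_{\text{tight}}(I(S)))$, which is item (1) here, this gives the stated characterisation. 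I would close by remarking that the hypothesis $P^*p \subset pP^*$ is what makes the passage $pq = q\tilde p$ possible and what kills the $\theta_{pqr}(G)$ factor, so that the criterion becomes genuinely independent of $h$ and $q$.
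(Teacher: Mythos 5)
Your proposal is correct and follows essentially the same route as the paper: apply Theorem~\ref{thm:applying Starling's result to reg ADS}, observe that $P^*p\subset pP^*$ makes the condition $pqrP\cap qrP\neq\emptyset$ automatic for units, and then rewrite $G_{p,h,q}$ via $pq=q\tilde p$, collapsing $\theta_{pqr}(G)$ into $\theta_{qr}(G)$ and factoring the intersection as $h\theta_q\bigl(\bigcap_{(k,r)\in S}k\theta_r(G)\theta_{\tilde p}(k)^{-1}\bigr)\theta_p(h)^{-1}$, which shows the dependence on $h,q$ indeed drops out and yields the equivalence of \eqref{eq:condition EP'} with (2) and (3). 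The only blemish is cosmetic: uniqueness of $\tilde p$ in $pq=q\tilde p$ comes from left (not right) cancellation, and it is not needed anyway.
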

\begin{proof}
We claim that \eqref{eq:condition EP'} holds if and only if (2) and (3) are true. Let $p \in P^*$ and $(h,q) \in S$. We start by observing that \eqref{eq:weakly fixed formula I} holds for all $p\in P^*$ as $pqrP = qrP$ for all $q,r \in P$. Moreover, writing $pq=q\tilde{p}$ with $\tilde{p} \in P^*$, the set $G_{p,h,q}$ becomes
\[\begin{array}{lcl}
G_{p,h,q} &=&  \bigcap\limits_{(k,r) \in S}h\theta_q(k)\theta_{qr}(G)\theta_{pqr}(G)\theta_p(h\theta_q(k))^{-1} \\
&\stackrel{a)}{=}&  \bigcap\limits_{(k,r) \in S}h\theta_q(k)\theta_{qr}(G)\theta_{q\tilde{p}}(k)^{-1}\theta_p(h)^{-1}\\
&\stackrel{b)}{=}&  h\theta_q\bigl(\bigcap\limits_{(k,r) \in S}k\theta_r(G)\theta_{\tilde{p}}(k)^{-1}\bigr)\theta_p(h)^{-1},
\end{array}\]
where we used that:
\begin{enumerate}[a)]
\item $\theta_{pqr}(G) = \theta_{qr}(G)$ for $pqr=qrp'$ and $\theta_{p'} \in \Aut(G)$.
\item $G$ is a group and $\theta_q$ is injective.
\end{enumerate}
This proves the claim and hence we can apply Theorem~\ref{thm:applying Starling's result to reg ADS}.
\end{proof}

\begin{remark}\label{rem:absorbing q for P*} 
The existence of $p \in P^*$ and $q \in P$ such that $p \neq 1$, but $pq=q$, i.e. $\tilde{p}=1$, immediately leads to a violation of \eqref{eq:condition EP'} as $\bigcap_{(k,r) \in S} k\theta_r(G)k^{-1}$ is a subgroup of $G$. Note that this phenomenon can only occur in the case where $P$ is not right cancellative.
\end{remark}

\begin{remark}\label{rem:simple Q(S) for reg ADS with P*p in pP*}
Suppose that $P$ is right cancellative. If $\theta$ separates the points of $G$, i.e. $\bigcap_{p \in P} \theta_p(G) = \{1\}$, and $\theta:P^* \curvearrowright G$ is faithful, that is, for each $p \in P^*\setminus \{1\}$ there exists $g \in G$ with $\theta_p(g) \neq g$, then conditions (2) and (3) from Corollary~\ref{cor:simple Q(S) for reg ADS with P*p in pP*} are satisfied. Indeed, (2) is obvious. If we take $p \in P^*\setminus\{1\} \text{ and } q \in P$ to get $\tilde{p} \in P^*$ with $pq=q\tilde{p}$, right cancellation for $P$ implies $\tilde{p} \neq 1$. Since $\theta:P^* \curvearrowright G$ is faithful, there is $g \in G$ with $\theta_{\tilde{p}}(g) \neq g$. If $\theta$ separates the points in $G$, we can choose $r' \in P$ large enough such that $g^{-1}\theta_{\tilde{p}}(g) \notin \theta_{r'}(G)$. Therefore, 
\[\begin{array}{c} \bigcap\limits_{(k,r) \in S} k\theta_r(G)\theta_{\tilde{p}}(k)^{-1} \subset \bigcap\limits_{r \in P} \theta_r(G) \cap g\theta_{r'}(G)\theta_{\tilde{p}}(g)^{-1} = \emptyset, \end{array}\]
which shows (3).
\end{remark}

\noindent For $P^*= \{1\}$, we recover a condition that has already appeared in \cite{BLS1}. Recall that an action $H \curvearrowright J$ of a group $H$ on a set $J$ is said to be \emph{effective} if for every $h \neq 1$ there is $X \in J$ such that $h.X \neq X$. 

\begin{corollary}\label{cor:applying Starling's result to reg ADS with trivial P*}
Suppose $(\gpt)$ is an algebraic dynamical system with $P^*=\{1\}$ and right cancellative $P$. Let $S = \gxp$. The boundary quotient $\CQ(S)$ is simple if and only if 
\begin{enumerate}
\item[\textnormal{(1)}] $\CQ(S) \cong C^*_r(\CG_{\text{tight}}(I(S)))$, and
\item[\textnormal{(2)}] $S^* \curvearrowright \CJ(S)$ is effective.
\end{enumerate}
\end{corollary}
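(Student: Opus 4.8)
The plan is to derive Corollary~\ref{cor:applying Starling's result to reg ADS with trivial P*} as the special case $P^* = \{1\}$ of Corollary~\ref{cor:simple Q(S) for reg ADS with P*p in pP*}. First I would observe that when $P^* = \{1\}$, the hypothesis $P^*p \subset pP^*$ of Corollary~\ref{cor:simple Q(S) for reg ADS with P*p in pP*} is trivially satisfied, so that corollary applies and tells us $\CQ(S)$ is simple if and only if condition (1) holds together with $\bigcap_{(k,r) \in S} k\theta_r(G)k^{-1} = \{1\}$ (since condition (3) there is vacuous: there are no $p \in P^* \setminus \{1\}$). Thus the real content is to show that, under $P^* = \{1\}$ and hence $S^* = G \rtimes_\theta \{1\} \cong G$ by \cite{BLS1}*{Lemma 2.4}, the algebraic condition $\bigcap_{(k,r) \in S} k\theta_r(G)k^{-1} = \{1\}$ is equivalent to effectiveness of the action $S^* \curvearrowright \CJ(S)$.

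The next step is to unwind the action $S^* \curvearrowright \CJ(S)$ concretely. Since $\CJ(S) = \{\emptyset\} \cup \{(k,r)S \mid (k,r) \in S\}$ and $S^*$ acts by left multiplication, the nonzero constructible ideals are parametrised (with redundancy) by pairs $(k,r) \in S$, and an element $(g,1) \in S^* \cong G$ sends $(k,r)S$ to $(gk,r)S$. Effectiveness says: for every $g \neq 1$ in $G$ there exists $(k,r) \in S$ with $(gk,r)S \neq (k,r)S$. By Lemma~\ref{lem:ideal intersection for gxp} (or directly from $[s,t]=[s',t']$ iff $s'=sx, t'=tx$ for $x \in S^*$, combined with $S^* = G$), we have $(gk,r)S = (k,r)S$ precisely when $gk\theta_r(G) = k\theta_r(G)$, i.e. $g \in k\theta_r(G)k^{-1}$. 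Hence $(g,1)$ fixes \emph{every} nonzero element of $\CJ(S)$ if and only if $g \in \bigcap_{(k,r)\in S} k\theta_r(G)k^{-1}$, and it fixes $\emptyset$ trivially. Therefore the action fails to be effective exactly when there is $g \neq 1$ in this intersection, which is the negation of condition (2) of the present corollary being $\{1\}$. Assembling these equivalences gives the claim.

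I would present it roughly as: ``By Corollary~\ref{cor:simple Q(S) for reg ADS with P*p in pP*}, which applies vacuously when $P^* = \{1\}$, and noting condition (3) there is void, $\CQ(S)$ is simple iff (1) holds and $\bigcap_{(k,r)\in S} k\theta_r(G)k^{-1} = \{1\}$. It remains to identify the latter with effectiveness of $S^* \curvearrowright \CJ(S)$. Since $S^* = G \rtimes_\theta \{1\}$ by \cite{BLS1}*{Lemma 2.4} and $\CJ(S)$ consists of $\emptyset$ and the ideals $(k,r)S$, an element $(g,1)$ fixes $(k,r)S$ iff $g \in k\theta_r(G)k^{-1}$; it always fixes $\emptyset$. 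Hence $(g,1)$ acts trivially on all of $\CJ(S)$ iff $g \in \bigcap_{(k,r)\in S} k\theta_r(G)k^{-1}$, so effectiveness holds iff this intersection is $\{1\}$.''

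The main obstacle I anticipate is purely bookkeeping: making sure the parametrisation of $\CJ(S)$ by $S$ is handled correctly (different pairs $(k,r)$ can give the same ideal, and one must check that $(g,1)$ fixing $(k,r)S$ does not secretly depend on the chosen representative — but it does not, since $gk\theta_r(G)k^{-1}g^{-1} = k\theta_r(G)k^{-1}$ is representative-independent as a conjugation condition), and confirming that no extra constraint from the $pqrP \cap qrP$ clause or from condition (3) survives when $P^* = \{1\}$. There is no genuinely hard analytic step here; the work is in carefully translating the inverse-semigroup-flavoured condition from Starling into the plain group-theoretic statement about $G$ acting on its own lattice of (images of) $\theta$-subgroups.
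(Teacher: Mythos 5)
Your proposal is correct and follows essentially the same route as the paper: specialise Corollary~\ref{cor:simple Q(S) for reg ADS with P*p in pP*} to $P^*=\{1\}$ (where the hypothesis $P^*p \subset pP^*$ is trivial and condition (3) is void) and identify $\bigcap_{(k,r)\in S} k\theta_r(G)k^{-1}=\{1\}$ with effectiveness of $S^*\curvearrowright\CJ(S)$. The only difference is that you verify this last equivalence by a direct (and correct) computation with the ideals $(k,r)S$, whereas the paper simply cites \cite{BLS1}*{Lemmas 8.5 and 8.6} for it.
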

\begin{proof}
In the case of $P^*=\{1\}$, \cite{BLS1}*{Lemma 8.5 and Lemma 8.6} states that the action $S^* \curvearrowright \CJ(S)$ for $S= \gxp$ is effective if and only if $\bigcap_{(k,r) \in S} k\theta_r(G)k^{-1} = \{1\}$. Now Corollary~\ref{cor:simple Q(S) for reg ADS with P*p in pP*} applies because condition (3) is void due to $P^* = \{1\}$.
\end{proof}

\noindent Corollary~\ref{cor:applying Starling's result to reg ADS with trivial P*} yields a sophisticated answer to the question of a characterisation of simplicity of $\CO[\gpt]$ for irreversible algebraic dynamical systems $(\gpt)$ considered in \cite{Sta1}, where sufficient conditions were discussed, see \cite{Sta1}*{Theorem 3.26}. Due to \cite{Sta1}*{Definition 1.5 (C)}, $\theta_p \in \Aut(G)$ implies $p=1 \in P^*$. Moreover, $P$ is right cancellative and $P^* = \{1\}$ since $P$ is a countably generated, free abelian monoid. 

\begin{corollary}\label{cor:simplicity for IADS}
Let $(\gpt)$ be an irreversible algebraic dynamical system. Then $\CO[\gpt]$ is simple if and only if 
\begin{enumerate}
\item[\textnormal{(1)}] $\CO[\gpt] \cong C^*_r(\CG_{\text{tight}}(I(\gxp)))$, and
\item[\textnormal{(2)}] $\bigcap\limits_{(g,p) \in \gxp} g\theta_p(G)g^{-1} = \{1\}$.
\end{enumerate}
\end{corollary}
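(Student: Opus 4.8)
The plan is to obtain this statement by assembling results already proved in the paper, so that essentially no new computation is required. First I would record that for an irreversible algebraic dynamical system the semigroup $P$ is a countably generated free abelian monoid; in particular $P$ is right cancellative and $P^* = \{1\}$. Moreover, \cite{Sta1}*{Definition 1.5 (C)} forces $\theta_p \in \Aut(G)$ to hold only for $p = 1$, so the Standing Assumption of this section is satisfied by $(\gpt)$. Consequently the hypotheses of Corollary~\ref{cor:applying Starling's result to reg ADS with trivial P*} are met for $S = \gxp$.

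Next I would invoke Corollary~\ref{cor:consistency for IADS} to identify $\CO[\gpt]$ canonically with the boundary quotient $\CQ(\gxp)$, so that simplicity of either algebra is equivalent to simplicity of the other. Applying Corollary~\ref{cor:applying Starling's result to reg ADS with trivial P*}, this happens precisely when $\CQ(\gxp) \cong C^*_r(\CG_{\text{tight}}(I(\gxp)))$ and the action $S^* \curvearrowright \CJ(S)$ is effective. Transporting the first condition through the isomorphism of Corollary~\ref{cor:consistency for IADS} gives condition (1) of the statement, and, as recalled in the proof of Corollary~\ref{cor:applying Starling's result to reg ADS with trivial P*} via \cite{BLS1}*{Lemma 8.5 and Lemma 8.6}, effectiveness of $S^* \curvearrowright \CJ(S)$ is exactly the assertion $\bigcap_{(g,p) \in \gxp} g\theta_p(G)g^{-1} = \{1\}$, which is condition (2).

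Since every step is a direct appeal to a prior result, there is no serious obstacle. The only point demanding a little care is checking that the canonical isomorphism $\CO[\gpt] \cong \CQ(\gxp)$ of Corollary~\ref{cor:consistency for IADS} is compatible with the reduced tight-groupoid model, so that condition (1) above really is the transported version of condition (1) of Corollary~\ref{cor:applying Starling's result to reg ADS with trivial P*}. This is immediate once one recalls from Section~\ref{subsec: inv semigroup approach to BQ} that $\CQ(S) \cong C^*_{\text{tight}}(I(S)) \cong C^*(\CG_{\text{tight}}(I(S)))$ and that the isomorphism of Corollary~\ref{cor:consistency for IADS} is implemented on generators, so no genuine difficulty remains.
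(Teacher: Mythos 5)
Your proposal is correct and follows essentially the same route as the paper: verify that an irreversible algebraic dynamical system has $P$ right cancellative, $P^*=\{1\}$, and satisfies the Standing Assumption, then combine Corollary~\ref{cor:consistency for IADS} (to identify $\CO[\gpt]$ with $\CQ(\gxp)$) with Corollary~\ref{cor:applying Starling's result to reg ADS with trivial P*}, translating effectiveness of $S^*\curvearrowright\CJ(S)$ into condition (2) via \cite{BLS1}*{Lemma 8.5 and Lemma 8.6}. The paper's own proof is exactly this assembly, so no further comment is needed.
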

\begin{proof}
By Corollary~\ref{cor:consistency for IADS}, we have $\CQ(\gxp) \cong \CO[\gpt]$. As $P$ is right cancellative and $P^*=\{1\}$, the claim follows from Corollary~\ref{cor:applying Starling's result to reg ADS with trivial P*}.
\end{proof}

\begin{remark}\label{rem: minimal stuff}
In \cite{Sta1}*{Theorem 3.26}, the author proved that $\CO[\gpt]$ is simple 
(and purely infinite) given that the canonical action $\hat{\tau}$ of $S^* 
\cong G$ on the spectrum $G_{\theta}$ of the diagonal of $\CO[\gpt]$ is 
amenable and that the action $\theta$ is \emph{minimal} in the sense that 
$\bigcap_{p \in P} \theta_p(G) = \{1\}$. It is not hard to see that 
amenability of $\hat{\tau}$ yields amenability of 
$\CG_{\text{tight}}(I(\gxp))$ and hence $\CO[\gpt] \cong 
C^*(\CG_{\text{tight}}(I(\gxp))) \cong C^*_r(\CG_{\text{tight}}(I(\gxp)))$. In 
addition, minimality of $(\gpt)$ clearly implies (2) from 
Corollary~\ref{cor:simplicity for IADS}. So we see that, in general, the 
conditions on $(\gpt)$ are slightly milder than the ones obtained in 
\cite{Sta1}. Note however, that minimality of $(\gpt)$ is in fact necessary 
and sufficient for simplicity of $\CO[\gpt]$ in the case where $G$ is abelian, 
as assumed in \cite{CV}.\vspace*{3mm}
\end{remark}

\noindent Let us now briefly discuss pure infiniteness of $\CQ(S)$ for $S=\gxp$. It was proven in \cite{Star}*{Theorem 4.15} that, for general right LCM $T$, the boundary quotient $\CQ(T)$ is purely infinite if and only if $\CG_{\text{tight}}(T)$ is not a single point, provided that $\CQ(T)$ is simple and $T$ satisfies condition \eqref{eq:condition H}. Hence, pure infiniteness is almost automatic in this case. Indeed, $\CG_{\text{tight}}(T)$, as a set, is given by the equivalence classes of 
\[\begin{array}{ll}
\CG'_{\text{tight}}(T) \vspace*{2mm}\\
&\hspace*{-15mm} := \{\bigl([s,t],\xi\bigr) \mid s,t \in T, \xi \subset \CJ(T) \text{ tight filter with } r \in tT \text{ for all } rT \in \xi\}
\end{array}\]
with respect to $\bigl([s,t],\xi\bigr) \sim \bigl([s',t'],\xi'\bigr)$ defined as 
\[\xi=\xi' \text{ and there exists } rT \in \xi \text{ such that } [s,t].rT = [s',t'].rT,\] 
where $[s,t].rT := s(t^{-1}(rT))$, see \cite{Star}*{Subsection 4.1} for details. 



\begin{corollary}\label{cor:pure inf if not a group}
Suppose that $(\gpt)$ is an algebraic dynamical system such that \eqref{eq:condition H} holds for $S$ and $\CQ(S)$ is simple. Then $\CQ(S)$ is purely infinite if and only if $P$ is not a group.  
\end{corollary}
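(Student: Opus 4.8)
The plan is to run everything through Starling's \cite{Star}*{Theorem 4.15}, recalled above: since $\CQ(S)$ is assumed simple and $S$ satisfies \eqref{eq:condition H}, that result says $\CQ(S)$ is purely infinite if and only if $\CG_{\text{tight}}(I(S))$ is not a single point. Because $\CQ(S)\cong C^*(\CG_{\text{tight}}(I(S)))$, a one-point groupoid would force $\CQ(S)\cong\C$; hence it suffices to show that $\CQ(S)\cong\C$ holds precisely when $P$ is a group.

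First I would treat the case where $P$ is not a group. Pick $p_0\in P\setminus P^*$. By the Standing Assumption $\theta_{p_0}\notin\Aut(G)$, and since $\theta_{p_0}$ is injective this means $\theta_{p_0}(G)\subsetneq G$, so choose $g_0\in G\setminus\theta_{p_0}(G)$. Lemma~\ref{lem:ideal intersection for gxp} then gives $(1,p_0)S\cap(g_0,p_0)S=\emptyset$: we have $p_0P\cap p_0P=p_0P$, but $g_0\notin\theta_{p_0}(G)$ rules out any $k\in G$ with $\theta_{p_0}(k)\in g_0\theta_{p_0}(G)$. Thus $\bar e_{(1,p_0)S}$ and $\bar e_{(g_0,p_0)S}$ are projections in $\CQ(S)$ whose product is $\bar e_\emptyset=0$ by (L3) and (L4). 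If $\CQ(S)$ were isomorphic to $\C$, the isometries $\bar v_{(1,p_0)}$ and $\bar v_{(g_0,p_0)}$ would have to be unitaries, forcing $\bar e_{(1,p_0)S}=\bar e_{(g_0,p_0)S}=1$ and hence $1\cdot 1=0$, a contradiction. So $\CQ(S)\not\cong\C$, $\CG_{\text{tight}}(I(S))$ is not a single point, and \cite{Star}*{Theorem 4.15} yields that $\CQ(S)$ is purely infinite.

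For the converse, suppose $P$ is a group. Then each $\theta_p$ is invertible with inverse $\theta_{p^{-1}}$, so $S=G\rtimes_\theta P$ equals its own group of units; equivalently $S^*=G\rtimes_\theta P^*=S$ by \cite{BLS1}*{Lemma 2.4}. Consequently every principal right ideal of $S$ equals $S$, so each factor $1-\bar e_{sS}$ is zero and relation \eqref{eq:Q condition} is vacuous, i.e. $\CQ(S)=C^*(S)$ is the full group $C^*$-algebra of the group $S$. Since $C^*(S)$ always carries the character afforded by the trivial representation of $S$, simplicity of $\CQ(S)$ forces $\CQ(S)\cong\C$, which is finite and therefore not purely infinite. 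This settles the remaining direction of the equivalence.

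The argument is short once Starling's pure infiniteness criterion is available; the step that needs the most care is the ``$P$ a group'' direction, where the hypothesis that $\CQ(S)$ is simple is genuinely used. For an arbitrary group $S$ the boundary quotient $C^*(S)$ is typically very far from simple (e.g.\ $C^*(\Z)=C(\T)$), and it is exactly simplicity that collapses the picture to $S=\{1\}$ and hence to $\CQ(S)\cong\C$. The only other input is the elementary computation with Lemma~\ref{lem:ideal intersection for gxp} producing two disjoint principal right ideals as soon as $P$ fails to be a group, together with the trivial remark that an isometry in $\C$ is a unitary.
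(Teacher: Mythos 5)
Your proof is correct, and it turns on the same pivot as the paper's -- Starling's \cite{Star}*{Theorem 4.15} combined with the observation that $P \setminus P^* \neq \emptyset$ plus the Standing Assumption produces some $g_0 \notin \theta_{p_0}(G)$ and hence two disjoint principal right ideals -- but both directions are executed somewhat differently. For ``$P$ not a group'', the paper turns the disjointness $(g_1,p)S \cap (g_2,p)S = \emptyset$ into two distinct ultrafilters in the tight spectrum, so that $\CG_{\text{tight}}(I(S))$ visibly has more than one point; you instead rule out $\CQ(S) \cong \IC$ at the $C^*$-level (two orthogonal projections $\bar{e}_{(1,p_0)S}$, $\bar{e}_{(g_0,p_0)S}$ that would both have to equal $1$ in $\IC$) and then invoke $\CQ(S) \cong C^*(\CG_{\text{tight}}(I(S)))$ to conclude the groupoid is not a single point -- only the easy implication ``one-point groupoid $\Rightarrow \CQ(S) \cong \IC$'' is used, so this is sound. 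For ``$P$ a group'', the paper asserts that the tight groupoid is a singleton and hence $\CQ(S) \cong \IC$ without appealing to simplicity; your route -- all $e_{sS} = 1$ make \eqref{eq:Q condition} vacuous, so $\CQ(S) = C^*(S)$ is the full group $C^*$-algebra, and the trivial character together with simplicity forces $\CQ(S) \cong \IC$, which is not purely infinite -- is in fact the more careful one, since for a nontrivial group (e.g. $G$ trivial and $P = \Z$, where $\CQ(S) = C(\T)$) the groupoid of germs over the one-point tight spectrum is the group itself rather than a point, and it is exactly the simplicity hypothesis that collapses it. In short, your argument buys a cleaner, explicitly hypothesis-using treatment of the group case, at the cost of detecting the two boundary points indirectly through the $C^*$-algebra rather than exhibiting them in the tight spectrum as the paper does.
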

\begin{proof}
We start by observing that $\CJ(S)= \{\emptyset,S\}$ holds if and only if $S$ is a group (as $sS=S$ implies $s \in S^*$ for all $s \in S$). In this case, the only tight filter on $\CJ(S)$ is $\{S\}$ and every $[s,t] \in I(S)$ fixes $S$, so $\CG_{\text{tight}}(I(S))$ is just a singleton. So if $S$ is a group, which is equivalent to $P$ being a group, then $\CQ(S) \cong \IC$. If $P$ is not a group, then there is $p \in P$ such that $[G:\theta_p(G)] \geq 2$. So there are $g_1,g_2 \in G$ with $g_1^{-1}g_2 \notin \theta_p(G)$. This amounts to $(g_1,p)S \cap (g_2,p)S = \emptyset$ using Lemma~\ref{lem:ideal intersection for gxp}. There is at least one ultrafilter $\xi_i$ of $\CJ(S)$ containing $(g_i,p)S$ for $i=1,2$. We clearly have $(g_2,p)S \notin \xi_1$ as $(g_1,p)S \cap (g_2,p)S = \emptyset$, so $\xi_1 \neq \xi_2$. Therefore, $\CG_{\text{tight}}(S)$ contains at least two distinct points and, with the help of \cite{Star}*{Theorem 4.15}, we conclude that $\CQ(S)$ is purely infinite.
\end{proof}

\noindent The essential ingredient in here is that $S$ fails to be left reversible as soon as $P$ is not a group. It was pointed out to us by Xin Li that there is a deeper connection between pure infiniteness of $C^*$-algebras associated to left cancellative semigroups (without assuming simplicity) and failure with respect to left reversibility.

As a final result, we collect a number of cases in which we now know that $\CQ(\gxp)$ belongs to a well-understood class of $C^*$-algebras classifiable by K-theory, see \cites{Kir,Phi}.

\begin{theorem}\label{thm:UCT Kirchberg algs}
Suppose $(\gpt)$ is an algebraic dynamical system such that $P$ is not a group and $\CG_{\text{tight}}(I(\gxp))$ is amenable. Then $\CQ(\gxp)$ is a unital UCT Kirchberg algebra provided that one of the following conditions holds:
\begin{enumerate}
\item[\textnormal{(1)}] $\gxp$ satisfies \eqref{eq:condition H} and for all $s,t \in (\gxp)_0$, the element $[s,t]$ satisfies \eqref{eq:condition EP}.
\item[\textnormal{(2)}] $P$ is right cancellative, $P^*p \subset pP^*$ for all $p \in P$, and $\gxp$ satisfies \textnormal{(2)} and \textnormal{(3)} from Corollary~\ref{cor:simple Q(S) for reg ADS with P*p in pP*}.
\item[\textnormal{(3)}] $P$ is right cancellative, $P^*=\{1\}$ and $\bigcap_{(g,p) \in \gxp} g\theta_p(G)g^{-1} = \{1\}$.
\end{enumerate}
\end{theorem}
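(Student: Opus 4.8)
The plan is to verify, under each of the three hypothesis packages, the four properties needed for $\CQ(\gxp)$ to be a unital UCT Kirchberg algebra: unitality, simplicity, pure infiniteness, and membership in the UCT class of separable nuclear $C^*$-algebras. Unitality is immediate since $C^*(\gxp)$ and hence $\CQ(\gxp)$ is generated by a unital family (the isometry $\bar v_1 = 1$). The heart of the argument is assembling the simplicity and pure infiniteness results already proved in this section.

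First I would establish simplicity. In case (1) this is exactly Theorem~\ref{thm:starling simple} applied to $S = \gxp$: the hypotheses there are condition \eqref{eq:condition H} together with $[s,t]$ satisfying \eqref{eq:condition EP} for all $s,t \in S_0$, plus the isomorphism $\CQ(S) \cong C^*_r(\CG_{\text{tight}}(I(S)))$. The latter isomorphism follows from amenability of $\CG_{\text{tight}}(I(\gxp))$, since for an amenable \'etale groupoid the full and reduced $C^*$-algebras coincide (one also invokes $\CQ(S) \cong C^*(\CG_{\text{tight}}(I(S)))$ from \cite{Star}*{Theorem 3.7}). In case (2) simplicity is Corollary~\ref{cor:simple Q(S) for reg ADS with P*p in pP*}, whose conditions (1)--(3) are met: condition (1) there is again supplied by amenability, and conditions (2), (3) are assumed. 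In case (3) simplicity is Corollary~\ref{cor:applying Starling's result to reg ADS with trivial P*}, whose condition (1) is supplied by amenability and whose condition (2), effectiveness of $S^* \curvearrowright \CJ(S)$, is equivalent to $\bigcap_{(g,p)\in\gxp} g\theta_p(G)g^{-1} = \{1\}$ by \cite{BLS1}*{Lemma 8.5 and Lemma 8.6}, which is the stated hypothesis.

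Next, pure infiniteness: in all three cases $\CQ(\gxp)$ is simple (just established) and $P$ is not a group, so Corollary~\ref{cor:pure inf if not a group} applies directly, provided \eqref{eq:condition H} holds for $S$. In case (1) this is assumed outright; in cases (2) and (3) the semigroup $P$ is right cancellative, hence so is $S = \gxp$, and \eqref{eq:condition H} holds automatically by Remark~\ref{rem:condition H general}. Thus $\CQ(\gxp)$ is purely infinite and simple; being simple and purely infinite it is in particular separable and (being unital and simple purely infinite) non-type-I. Amenability of $\CG_{\text{tight}}(I(\gxp))$ gives that $\CQ(\gxp) \cong C^*(\CG_{\text{tight}}(I(\gxp)))$ is nuclear, and the UCT holds for $C^*$-algebras of amenable (hence a-T-menable) \'etale groupoids by the work of Tu. Assembling these, $\CQ(\gxp)$ is a unital separable nuclear simple purely infinite $C^*$-algebra satisfying the UCT, i.e.\ a unital UCT Kirchberg algebra, and by \cites{Kir,Phi} it is classified by its $K$-theory.

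The main obstacle is really just bookkeeping rather than a genuine mathematical difficulty: one must be careful that the three disjoint hypothesis packages each correctly feed the three separate simplicity corollaries, and in particular that \eqref{eq:condition H} is available (assumed in (1), automatic from right cancellation in (2) and (3)) and that the amenability hypothesis does double duty, both upgrading $C^*(\CG_{\text{tight}})$ to $C^*_r(\CG_{\text{tight}})$ for the simplicity criteria and delivering nuclearity and the UCT. No new estimate or construction is needed; the proof is a synthesis of Corollaries~\ref{cor:simple Q(S) for reg ADS with P*p in pP*},~\ref{cor:applying Starling's result to reg ADS with trivial P*},~\ref{cor:pure inf if not a group} and Theorem~\ref{thm:starling simple} with standard facts about amenable groupoids.
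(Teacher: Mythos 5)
Your proposal is correct and follows essentially the same route as the paper: simplicity from Theorem~\ref{thm:starling simple}, Corollary~\ref{cor:simple Q(S) for reg ADS with P*p in pP*} and Corollary~\ref{cor:applying Starling's result to reg ADS with trivial P*} (with amenability supplying $\CQ \cong C^*_r(\CG_{\text{tight}})$ and right cancellation supplying \eqref{eq:condition H} in cases (2) and (3)), pure infiniteness from Corollary~\ref{cor:pure inf if not a group}, and nuclearity plus the UCT from amenability of the tight groupoid. The only slip is your justification of separability: it does not follow from simplicity and pure infiniteness, but it is immediate since $\gxp$ is countable, which is how the paper argues.
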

\begin{proof}
In each case, $\CQ(\gxp)$ is simple, see Theorem~\ref{thm:starling simple}, Theorem~\ref{thm:applying Starling's result to reg ADS}, Corollary~\ref{cor:simple Q(S) for reg ADS with P*p in pP*}, and Corollary~\ref{cor:applying Starling's result to reg ADS with trivial P*}, respectively. Since $P$ is not a group, Corollary~\ref{cor:pure inf if not a group} shows that $\CQ(\gxp)$ is also purely infinite. $\CQ(\gxp)$ is separable since $\gxp$ is countable. Finally, nuclearity and the UCT follow from amenability of $\CG_{\text{tight}}(I(\gxp))$, see for instance \cite{BO}*{Theorem 5.6.18} and \cite{Tu}.
\end{proof}

\noindent We remark that Theorem~\ref{thm:UCT Kirchberg algs} generalises \cite{Sta1}*{Corollary 3.28}.

\section*{References}
\begin{biblist}
\bibselect{bib}
\end{biblist}

\end{document}